\newtheorem{theorem}{Theorem}[section]
\newtheorem{corollary}[theorem]{Corollary}
\newtheorem{proposition}[theorem]{Proposition}
\newtheorem{lemma}[theorem]{Lemma}
\newtheorem{remark}[theorem]{Remark}
\newtheorem{definition}[theorem]{Definition}
\newcommand{\re}{\mbox{\rm Re}}
\newcommand{\im}{\mbox{\rm Im}}
\newcommand{\0}{L^2(\D)}
\newcommand{\1}{H^1(\D)}
\newcommand{\2}{H^{-1}(\D)}
\newcommand{\3}{H^{1}_{0}(\D)}
\newcommand{\4}{\tangentspace{\ci u}}
\newcommand{\5}{\mathscr{H}_{u}^{\mathbb{C} , \perp }}
\newcommand{\D}{\mathcal{D}}
\newcommand{\lm}{\lambda}
\newcommand{\R}{\mathbb{R}}
\newcommand{\C}{\mathbb{C}}
\newcommand{\Ltwo}[2]{(#1,#2)_{L^2(\D)}}
\newcommand{\Hone}[2]{(#1,#2)_{H^1(\D)}}
\definecolor{mycolor1}{rgb}{0.00000,0.44700,0.74100}%
\newcommand{\quotes}[1]{``#1''}
\newcommand{\dx}{\hspace{2pt}\mbox{d}x}
\newcommand{\ci}{\mathrm{i}} 
\newcommand{\sR}{\mbox{\rm \tiny R}}
\newcommand{\nablaR}{\nabla_{\hspace{-2pt}\sR}}
\newcommand{\VR}{V_{\hspace{-2pt}\sR}}
\newcommand{\tangentspace}[1]{T_{#1}\mathbb{S}}
\newcommand{\Acal}{\mathcal{A}}
\newcommand{\Ical}{\mathcal{I}}
\newcommand{\Jcal}{\mathcal{J}}
\begin{document}

\begin{center}{On discrete ground states of
rotating \\
Bose--Einstein condensates\renewcommand{\thefootnote}{\fnsymbol{footnote}}\setcounter{footnote}{0}
 \hspace{-3pt}\footnote{The authors acknowledge the support by the German Research Foundation (DFG grant HE 2464/7-1).}}\\[2em]
 \end{center}

\begin{center}
{\large Patrick Henning\footnote[1]{\label{affiliation}Department of Mathematics, Ruhr-University Bochum, DE-44801 Bochum, Germany.\\ email: \href{mailto:patrick.henning@rub.de}{patrick.henning@rub.de} and \href{mailto:mahima.yadav@rub.de}{mahima.yadav@rub.de} }
and Mahima Yadav\textsuperscript{\ref{affiliation}}}\\[2em]
\end{center}

\begin{center}
{\large{\today}}
\end{center}

\begin{center}
\end{center}

\begin{abstract}
The ground states of Bose--Einstein condensates in a rotating frame can be described as constrained minimizers of the Gross--Pitaevskii energy functional with an angular momentum term. In this paper we consider the corresponding discrete minimization problem in Lagrange finite element spaces of arbitrary polynomial order and we investigate the approximation properties of discrete ground states. In particular, we prove a priori error estimates of optimal order in the $L^2$- and $H^1$-norm, as well as for the ground state energy and the corresponding chemical potential. A central issue in the analysis of the problem is the missing uniqueness of ground states, which is mainly caused by the invariance of the energy functional under complex phase shifts. Our error analysis is therefore based on an Euler--Lagrange functional that we restrict to certain tangent spaces in which we have local uniqueness of ground states. This gives rise to an error decomposition that is ultimately used to derive the desired a priori error estimates. We also present numerical experiments to illustrate various aspects of the problem structure.
\end{abstract}

\section{Introduction}

At extreme temperatures close to 0 Kelvin, dilute bosonic gases can condensate to a fascinating state of matter: a so-called Bose-Einstein condensate (BEC), cf. \cite{Bos24,Ein24,PiS03}. In such a condensate most of the particles occupy the same quantum state which makes it effectively behave like a macroscopically observable \quotes{super atom}. This super atom allows to study various quantum phenomena, where superfluidity, i.e., frictionless flow, is perhaps one of most intriguing ones \cite{MAH99}. Superfluidity can be verified by rotating the BEC with a stirring potential, triggering  the formation of vortices with a quantized circulation if the rotation frequency is appropriately chosen.

The ground states of a rotating BEC can be described as the global minimizers $u \in H^1_0(\D,\C)$ of the Gross--Pitaevskii energy functional 
\begin{align*}
E(w)= \frac{1}{2}\int_{\mathcal{D}} |\nabla w|^2 + V\, |w|^2  - \Omega\, \bar{w}\, \mathcal{L}_{3}w + \frac{\beta}{2} |w|^4 \dx
\end{align*}
under the constraint that $\int_{\D}|w|^2 \dx =1$. Here, $\D \subset \R^d$ (for $d=2,3$) is the computational domain, $V \in L^{\infty}(\D,\R_{\ge 0})$ models a trapping potential, $\beta \in \R_{\ge }0$ describes repulsive particle interactions, $\Omega \in \R$ is the angular velocity and $\mathcal{L}_3 = - \ci \left( x_1 \partial_{x_2} - x_2 \partial_{x_1} \right)$ the $x_3$-component of the angular momentum. The existence and non-existence of ground states in the above setting are e.g. studied in \cite{BWM05}.

In practice, a numerical scheme for computing ground states involves two steps: 1. a space discretization which essentially requires a minimization of $E$ over a finite dimensional (discrete) space and 2. an appropriate iterative solver that allows to find the corresponding discrete minimizers. We will not discuss the second step here but just refer to \cite{AHP21NumMath,AltPetSty22,AnD14,ALT17,BaC13b,BCW10,BaD04,BaS08,BWM05,COR09,CaL00,DaP17,DiC07,HeP20,JarKM14} and the references therein for different approaches and complementary analytical results (as far as they exist). The focus of this paper is on the first step, i.e., the approximation of analytical ground states in discrete spaces and a corresponding error quantification. Typical space discretizations involve spectral and pseudo-spectral methods \cite{BaC13b,CCM10}, quasi-uniform finite element spaces \cite{CCM10} or adaptive finite elements \cite{CGHZ11,DaH10,HSW21}. Two-grid post-processing techniques in FE spaces are presented in \cite{CCH18,HMP14b}.

When it comes to the analysis of these space discretizations, and in particular the derivation of a priori error estimates, the case of rotating Bose--Einstein condensates has not been regarded yet. For non-rotating condensates (i.e. for $\Omega=0$) the first results were obtained by Zhou \cite{Zho04,Zho07} who considered arbitrary finite dimensional spaces and derived corresponding $H^1$-error estimates. The estimates were however not sharp yet due to a suboptimal contribution from the $L^2$-norm of the error. This gap could be closed by Canc\`es et al. \cite{CCM10} who revisited the problem and provided the first sharp error analysis, covering $L^2$- and $H^1$-error estimates together with estimates for the ground state energy and the so-called ground state eigenvalue. Shortly after, the results were generalized yet another time by Zhou and coworkers \cite{CHZ11} by adding the effect of Hartree-type potentials to the analysis. Error estimates for finite element discretizations of the Kohn-Sham and Hartree-Fock equation were obtained in \cite{CDGHZ14,CGHY13,MadayTurinici2000} and generalized finite element approximations for the standard Gross--Pitaevskii model were studied in \cite{HMP14b,HeP21}.

Extending the aforementioned error analysis to rotating BECs is however not straightforward. This is because the setting for $\Omega=0$ is fully real-valued, whereas the problem becomes complex-valued for $\Omega \not=0$. As a result, uniqueness of ground states is lost, cf. \cite{BWM05}. One particular issue are complex phase shifts of the form $e^{\ci \theta}$ for angles $\theta \in [0,2\pi)$, i.e., if $u$ is an $L^2$-normalized minimizer of $E$ then $e^{\ci \theta} u$ is also a normalized minimizer for any $\theta$. This means that ground states are not locally unique. Our analysis is therefore based on restricting the Euler-Lagrange functional for the constrained minimization problem to tangent spaces where the second Fr\'echet derivative of the Euler-Lagrange functional (evaluated at a ground state) has a bounded inverse. On these spaces, we can derive $H^1$-error estimates by applying abstract approximation results for discrete solutions to nonlinear equations. For estimating the $L^2$- and eigenvalue errors we change the approach and draw inspiration from the Kohn-Sham setting \cite{CDGHZ14,CGHY13,MadayTurinici2000} (where also missing local uniqueness is faced) and consider an $L^2$-orthogonal decomposition of the error with contributions from the tangent spaces. This decomposition gives rise to strong error identities that we then use to derive different types of error estimates in $\mathbb{P}^k$-Lagrange finite element spaces $V_{h,k}$. In particular, we obtain convergence of order $\mathcal{O}(h^{k})$ for the $H^1$-error and convergence of order $\mathcal{O}(h^{k+1})$ for the $L^2$-error. We also derive optimal order estimates for the ground state energy and the ground state eigenvalue, both of order $\mathcal{O}(h^{2k})$. With this, our findings directly generalize the results of \cite{CCM10} to the Gross--Pitaevskii equation with rotation and to polynomial spaces of higher order.\\[0.5em]
{\it Outline:} The paper is structured as follows. In Section \ref{section-setting}, we give an introduction to the analytical setting of the Gross-Pitaevskii equation with angular momentum, we discuss the issue of uniqueness and state our basic assumptions. The finite element discretization and our main results regarding the corresponding approximation properties are presented in Section \ref{fem-main-results} and afterwards illustrated by numerical experiments in Section \ref{exp-second-derivative}. Section \ref{proof-of-approx-theorem} is devoted to the proofs of the main results: Sections \ref{basic_analysis} and \ref{subsection:abstracy-bounds} provide an abstract asymptotic convergence analysis together with some preliminary estimates that allow to control energy- and eigenvalue-errors in terms of the $H^1$-error. The $H^1$-error itself is then estimated in Section \ref{subsection-H1-estimate}, whereas the $L^2$-error estimate follows in Section \ref{subsection-L2-estimate}. An error estimate of optimal order for the eigenvalue is then proved in Section \ref{proof-of-order-theorem}. Complementary results on the inf-sup stability are given in the appendix.

\section{Analytical setting}
\label{section-setting}

In the following we shall consider the Gross-Piteavskii equation on a computational domain $\D$ with homogeneous Dirichlet boundary conditions, where we assume that
\begin{enumerate}[label={(A\arabic*)}]
\item \label{A1} $\D \subset \mathbb{R}^d$ is a bounded, convex domain for $d=2,3$ with polygonal boundary.
\end{enumerate}
Since we are concerned with the minimization of a real-valued energy functional $E$, it is natural to consider all Hilbert spaces as {\it real-linear} spaces. To be precise, we interpret the Lebesgue space $L^2(\mathcal{D}):= L^2(\mathcal{D},\C)$ as a {\it real} Hilbert space equipped with the inner product 
$$ 
\Ltwo{v}{w}:= \re \Big( \int_{\D} v \, \overline{w} \dx \Big),
$$
where $\overline{z}$ denotes the complex conjugate of a number $z\in \C$. Analogously, the Sobolev space $H_{0}^{1}(\mathcal{D}):= H_{0}^{1}(\mathcal{D},\mathbb{C})$ is also seen as a real Hilbert space equipped with the inner product
$$\hspace{6mm}
\Hone{v}{w}:= \re \Big(  \int_{\mathcal{D}} v \, \overline{w} \dx + \int_{\mathcal{D}} \nabla v \cdot \overline{\nabla w} \dx \Big).$$
The corresponding $(\text{real})$ dual space is denoted by $ \big(H_{0}^{1}(\D)\big)^{*} = H^{-1}(\mathcal{D}):= H^{-1}(\mathcal{D},\C)$, with $ \langle \cdot , \cdot \rangle := \langle \cdot ,\cdot \rangle _{H^{-1}(\mathcal{D}) , H^{1}_{0}(\mathcal{D}}$ being the canonical duality pairing in $H_{0}^{1}(\mathcal{D})$. 

For the data, we assume that
\begin{enumerate}[resume,label={(A\arabic*)}]
\item\label{A2} $V \in L^{\infty}(\mathcal{D},\mathbb{R}_{\ge 0} )$ denotes a non-negative potential and
\item\label{A3} the parameter $\beta \in \R_{\ge0}$ characterizes the strength of (repulsive) particle interactions.
\end{enumerate}
To model condensates that are rotating around the $x_3$-axis of the coordinate system, we require the third component of the angular momentum operator, which we denote by $\mathcal{L}_3 := - \ci \left( x_1 \partial_{x_2} - x_2 \partial_{x_1} \right)$. The angular velocity of that component shall be given by $\Omega \in \mathbb{R}$. To guarantee existence of ground states, we formally need to add a balancing assumption that the trapping frequencies in $x_1$- and $x_2$-direction are larger than the angular frequency, i.e.
\begin{enumerate}[resume,label={(A\arabic*)}]
\item\label{A4} there is $\varepsilon > 0$ such that
\begin{align*}
	V(x) - \frac{1 + \varepsilon}{4} \Omega^2 (x_1^2 + x_2^2) \ge  0 \quad \text{for almost all } x \in \D.
\end{align*}
\end{enumerate}
Physically speaking, assumption \ref{A4} ensures that centrifugal forces do not become too strong in comparison to the strength of the trapping potential $V$. 

With the above notation, we can define the Gross--Pitaevskii energy functional
$$
E :H_{0}^{1}(\D) \rightarrow \mathbb{R}
$$
for $w\in H^1_0(\D)$ by
\begin{align}
\label{energy1}
E(w)
:= \frac{1}{2}\int_{\mathcal{D}} |\nabla w|^2 + V\, |w|^2  - \Omega\, \bar{w}\, \mathcal{L}_{3}w + \frac{\beta}{2} |w|^4 \dx.
\end{align}
Since ground states will be defined as constrained minimizers of $E$, we briefly need to discuss the well-posedness.
We start with the observation that $E$ is indeed real-valued and bounded from below: Using assumption \ref{A4}, we let $\VR(x):= V(x) - \tfrac{1}{4}\, \Omega^2\, |x|^2 \ge 0$ denote a {\it non-negative} modified potential and we let the covariant gradient $\nablaR$ be given by
\begin{align}
\label{def-nablaR}
\nablaR w := \nabla w + \ci \tfrac{\Omega}{2} R^{\top} w
\end{align}
for the divergence-free vector field $R(x):=(x_2,-x_1,0)$ if~$d=3$ and $R(x):=(x_2,-x_1)$~if $d=2$. With this
\begin{align}
\label{R-inner-product}
(v,w)_{\sR} := \re \Big(  \int_{\mathcal{D}} \nablaR v \cdot \overline{\nablaR w} \dx + \int_{\mathcal{D}} \VR v \, \overline{w} \dx  \Big) \, 
\end{align}
defines an inner-product on $H^{1}_{0}(\D)$ with induced norm $\| v \|_{\sR}:= \sqrt{(v ,v)_{\sR}}$. 
It is now easy to see (cf. \cite{DaK10,HeM17}) that $E(w)$ can be equivalently rewritten as
\begin{align}
\label{energy_function}
E(w) = \frac{1}{2} \| w \|_{\sR}^2 +  \frac{\beta}{4} \int_{\D} |w|^4 \dx. 
\end{align}
Hence, $E$ is a real functional that is bounded from below by zero.

A {\it ground state} of a Bose--Einstein condensate in the rotational frame is defined as a stable stationary state in the lowest possible energy level. Mathematically, it is characterized as a global minimizer $u$ of $E$ under the normalization condition that $\int_{\D} |u|^2 \dx =1$, i.e.
\begin{align}
\label{definition-groundstate}
E(u) =\underset{v \in \mathbb{S}}{\inf}\hspace{2pt} E(v)
\qquad \mbox{where } \mathbb{S}:= \{  v \in H^1_0(\D) \mbox{ and } \| v \|_{L^2(\D)} =1 \}.
\end{align}
The problem is a non-convex minimization problem on the $L^2$-sphere in $H^1_0(\D)$. Exploiting the reformulation \eqref{energy_function}, the energy functional is positively bounded from below on the manifold $\mathbb{S}$ with 
$$
E(v) \ge \tfrac{1}{2} \| v \|_{\sR}^{2} >  0 \qquad \mbox{for all } v\in \mathbb{S}.
$$
Additionally, $E$ is weakly lower semi-continuous on $H^1_0(\D)$ and it can be proved, cf. \cite[Lemma 2.2]{pc22}, that $\| \cdot \|_{\sR}$ is equivalent to the standard $H^{1}$-norm on $\3$. In particular, there exist constants $0<c(\varepsilon) < C(V,\Omega) < \infty$, depending on $V$, $\Omega$ and $\varepsilon$ respectively, such that
\begin{align}
\label{norm-equivalence-nablaR}
c(\varepsilon) \, \| v\|_{H^1(\D)}  \, \le \, \|  v \|_{\sR} \, \le \, C(V,\Omega)\,  \| v\|_{H^1(\D)}  \qquad \mbox{for all } v\in H^1_0(\D).
\end{align}
It can be shown that $c(\varepsilon)=\mathcal{O}(\sqrt{\varepsilon})$, i.e. the lower constant degenerates for $\varepsilon \rightarrow 0$, which corresponds to a violation of the physical assumption \ref{A4}. Now, since $\|  \cdot \|_{\sR}$ is equivalent to the standard $H^1$-norm on $H^1_0(\D)$, we easily see that $E$ fulfills the Palais--Smale condition on $\mathbb{S}$. As an even functional that fulfills the Palais--Smale condition we can apply classical Lusternik-Schnirelmann theory to $E$, cf. \cite[Chapt. 4, Theorem 5.5]{Kav93}, and conclude that $E$ has infinitely many critical values. The critical values are {\it unbounded from above} and bounded from below, where the smallest critical value is given by the ground state energy $E_0 \in \R$. In particular, there exists a sequence of critical values $E_k \in \R$ with $-\infty < E_0 \le E_1 \le E_2 \le \cdots E_k \rightarrow \infty$ for $k\rightarrow \infty$ and corresponding critical points $u_k\in\mathbb{S}$ with $E_k=E(u_k)$. A critical point to a smallest critical value $E_k=E_0$ is a ground state as defined in \eqref{definition-groundstate}. Note that a critical point $u_k \in \mathbb{S}$ to $E_k$ is not unique due to the invariance of $E$ under rotations of $u_k$ on the complex unit circle.
We summarize the previous discussion in the following corollary.
\begin{corollary}[Existence of ground states]
Under assumptions \ref{A1}-\ref{A4}, there exists at least one (physically meaningful) ground state $u\in \mathbb{S}$ to problem \eqref{definition-groundstate} and it holds $E(u)>0$.
\end{corollary}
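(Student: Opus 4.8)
The plan is to prove existence by the direct method of the calculus of variations and to read off positivity directly from the reformulation \eqref{energy_function}. Set $E_0 := \inf_{v\in\mathbb{S}} E(v)$; by the lower bound $E(v)\ge\tfrac12\|v\|_{\sR}^2>0$ on $\mathbb{S}$ this infimum is finite and nonnegative, so I may fix a minimizing sequence $(u_n)_{n}\subset\mathbb{S}$ with $E(u_n)\to E_0$. The first step is a uniform $H^1_0(\D)$-bound: since $E(u_n)\ge\tfrac12\|u_n\|_{\sR}^2$ and $\|\cdot\|_{\sR}$ is equivalent to the $H^1$-norm by \eqref{norm-equivalence-nablaR}, the sequence is bounded in $H^1_0(\D)$. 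By reflexivity I extract a subsequence (not relabelled) with $u_n\rightharpoonup u$ weakly in $H^1_0(\D)$.

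The second step exploits compactness to control the constraint. By assumption \ref{A1} the domain $\D$ is bounded, so the Rellich--Kondrachov embedding $H^1_0(\D)\hookrightarrow L^2(\D)$ is compact, as is $H^1_0(\D)\hookrightarrow L^4(\D)$ for $d\le 3$; hence $u_n\to u$ strongly in $L^2(\D)$ and in $L^4(\D)$. Strong $L^2$-convergence preserves the normalization, $\|u\|_{L^2(\D)}=\lim_n\|u_n\|_{L^2(\D)}=1$, so that $u\in\mathbb{S}$. This is precisely the point where compactness is indispensable, since the sphere $\mathbb{S}$ is not weakly closed in $H^1_0(\D)$.

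For the lower semicontinuity I would use the decomposition \eqref{energy_function}. The quartic term is continuous under strong $L^4$-convergence, so $\int_\D|u_n|^4\dx\to\int_\D|u|^4\dx$. The quadratic term $\tfrac12\|\cdot\|_{\sR}^2$ is the square of a Hilbert-space norm, hence convex and strongly continuous, and therefore weakly lower semicontinuous on $H^1_0(\D)$; consequently $\liminf_n\|u_n\|_{\sR}^2\ge\|u\|_{\sR}^2$. Combining both contributions gives $E(u)\le\liminf_n E(u_n)=E_0$, and since $u\in\mathbb{S}$ we also have $E(u)\ge E_0$, so $E(u)=E_0$ and $u$ is a ground state. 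Positivity is then immediate: from \eqref{energy_function} and \eqref{norm-equivalence-nablaR}, and using $\|u\|_{L^2(\D)}=1$, one has $E(u)\ge\tfrac12\|u\|_{\sR}^2\ge\tfrac12\,c(\varepsilon)^2\,\|u\|_{L^2(\D)}^2=\tfrac12\,c(\varepsilon)^2>0$.

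The obstacle one might expect is the sign-indefinite angular-momentum term $-\Omega\,\bar{w}\,\mathcal{L}_3 w$ in \eqref{energy1}, which is not manifestly bounded below and couples the phase of $w$ to its gradient, so that neither coercivity nor weak lower semicontinuity of $E$ is evident from the original form. The structural observation that dissolves this difficulty is exactly the reformulation \eqref{energy_function}: assumption \ref{A4} forces the covariant form to carry a \emph{nonnegative} potential $\VR$, whereby the indefinite term is absorbed into the coercive, convex quadratic $\tfrac12\|\cdot\|_{\sR}^2$. Once this is in place the argument is the routine direct method. I note finally that existence also follows from the Palais--Smale and Lusternik--Schnirelmann discussion preceding the statement, with the smallest critical value $E_0$ attained at a ground state; the direct method above is simply a more self-contained route to the single minimizer asserted here.
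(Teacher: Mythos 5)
Your proof is correct, but it takes a genuinely different route from the paper. The paper does not prove the corollary by the direct method; the statement summarizes the discussion that precedes it, where, after establishing the reformulation \eqref{energy_function}, the norm equivalence \eqref{norm-equivalence-nablaR} and weak lower semicontinuity, the authors observe that $E$ is an \emph{even} functional satisfying the Palais--Smale condition on $\mathbb{S}$ and invoke classical Lusternik--Schnirelmann theory \cite[Chapt.~4, Theorem~5.5]{Kav93} to obtain an unbounded sequence of critical values $E_0 \le E_1 \le E_2 \le \cdots$, the smallest of which is attained and defines a ground state. Your argument --- minimizing sequence, uniform $H^1$-bound from $E(u_n)\ge \tfrac12\|u_n\|_{\sR}^2$, Rellich compactness into $L^2(\D)$ and $L^4(\D)$ to preserve the constraint $\|u\|_{\0}=1$ and pass to the limit in the quartic term, and weak lower semicontinuity of the equivalent Hilbert norm $\|\cdot\|_{\sR}$ --- is more elementary and fully self-contained, and it correctly isolates the same structural point the paper relies on: assumption \ref{A4} allows the sign-indefinite angular-momentum term to be absorbed into the coercive quadratic form $\tfrac12\|\cdot\|_{\sR}^2$, which is the only nontrivial obstacle here. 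What the paper's critical-point route buys in exchange is strictly more information: the whole hierarchy of critical values and critical points (the excited states), which the paper uses later to frame the Gross--Pitaevskii eigenvalue problem, whereas the direct method delivers only the minimizer --- which is, however, all the corollary asserts. Your positivity argument $E(u)\ge\tfrac12\,c(\varepsilon)^2>0$ coincides in substance with the paper's bound $E(v)\ge\tfrac12\|v\|_{\sR}^2>0$ on $\mathbb{S}$, and your closing remark accurately identifies the relationship between the two approaches.
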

\subsection{Uniqueness of ground states and quasi-isolation} 
\label{uniqueness-section}

Uniqueness of ground states can only be expected up to a constant phase factor $e^{\ci\theta}$ with angle $\theta \in [0,2\pi)$. This is because for any $u \in \mathbb{S}$ we have $e^{\ci\theta}u \in \mathbb{S}$ and $E(e^{\ci\theta}u)=E(u)$. Uniqueness of the ground state density $|u|^2$ (which is independent of $e^{\ci\theta}$) is possible, but can only be expected for frequencies $\Omega$ below a certain first critical frequency. For larger frequencies the ground state density $|u|^2$ can become non-unique, cf. \cite[Section 6.2]{BWM05}. The phenomenon is not yet fully understood, but numerical experiments and theoretical considerations indicate that missing uniqueness of $|u|^2$ is a rather rare occasion that only happens for certain  critical values, where a particular vortex state becomes energetically equally preferable to a ground state with less vortices (or without any vortices at all), cf. \cite{BWM05}. 
To account for the general lack of uniqueness, we define the set of ground states of \eqref{definition-groundstate} by
\begin{align*}
\mathcal{U}= \{ u \in \mathbb{S} \, |\, E(u) = \inf_{v \in \mathbb{S} } E(v)\}\, 
\end{align*}
and introduce the term of {\it quasi-isolated ground states}. Loosely speaking, we want that for every ground state $u \in \mathcal{U}$, there is a small $\delta$-neighborhood  $\mathbb{B}_{\delta}(u) := \{ v \in \mathbb{S} \,| \,\|v-u\|_{\1} < \delta \}$ of $u$ where all other ground states in  $\mathbb{B}_{\delta}(u)$ are precisely phase shifts of $u$, i.e.,
\begin{align}
\label{preliminary-quasi-isolation}
\hat{u} \in \mathbb{B}_{\delta}(u) \cap \mathcal{U} \quad \Rightarrow \quad \hat{u} = u e^{\ci \theta} \mbox{ for some } \theta \in [0,2 \pi).
\end{align}
This condition can be expressed through curves $\gamma: t \mapsto \gamma(t) \in \mathbb{S}$ crossing a ground state with $\gamma(0)=u \in \mathcal{U}$. Since $\gamma$ remains on $\mathbb{S}$, we have $\gamma^{\prime}(0) \in \tangentspace{u}$ with the tangent space in $u$ given by 
$$
\tangentspace{u} := \{ v \in H^1_0(\D) | \, (u,v)_{\0} = 0 \}.
$$
Now, if condition \eqref{preliminary-quasi-isolation} holds, the energy level $t \mapsto E(\gamma(t))$ has a strict local minimum in $t=0$ as long as $\gamma$ does not follow the direction $\ci u$ of complex phase shifts (since $\tfrac{\mbox{\scriptsize d}}{\mbox{\scriptsize d}t} e^{\ci t} u \vert_{t=0}=\ci u$).
This direction is blocked by imposing the condition
$\gamma^{\prime}(0) \in \tangentspace{\ci u}$.
Hence, for all curves with $\gamma^{\prime}(0) \in \tangentspace{u} \cap \tangentspace{\ci u}$, the necessary conditions for local minima imply $\tfrac{ \mbox{\scriptsize d}^{\,2} }{ \mbox{\scriptsize d}t^{2}} E(\gamma(t))\vert_{t=0} \ge 0$. The following definition formalizes these considerations, with the slight modification that the necessary condition for strict local minima is replaced by a sufficient condition. 
\begin{definition}[Quasi-isolated ground state] 
\label{definition-quasi-isolated-ground-state}
A ground state $u \in \mathcal{U}$ is called a \emph{quasi-isolated ground state} if for any differentiable curve $\gamma : (-\varepsilon,\varepsilon) \rightarrow \mathbb{S}$  with $\gamma(0)=u$ and $\gamma^{\prime}(0) \in (\tangentspace{u} \cap \tangentspace{\ci u}) \setminus \{ 0 \}$ it holds
 $$ \frac{ \mbox{d}^{\,2} }{ \mbox{d}\hspace{1pt}t^{2}} E(\gamma(t)) |_{t=0} > 0.$$
\end{definition} 
Since we will require the space $\tangentspace{u} \cap \tangentspace{\ci u}$ often, we introduce for brevity
\begin{align*}
\mathscr{H}_{u }^{\mathbb{C} , \perp}:=
\tangentspace{u} \cap \tangentspace{\ci u}.
\end{align*}
The notation is motivated by the simple observation that $\mathscr{H}_{u }^{\mathbb{C} , \perp}$ is the orthogonal complement of $u$ with respect to the {\it complex} $L^2$-inner product, whereas $\tangentspace{\ci u}$ is the orthogonal complement of $\ci u$ with respect to the {\it real} $L^2$-inner product.
With Definition \ref{definition-quasi-isolated-ground-state} we make the following assumption on the ground states:
\begin{enumerate}[resume,label={(A\arabic*)}]
\item\label{A5} Every ground state $u \in \mathcal{U}$ is a quasi-isolated ground state.
\end{enumerate}
With other words, two ground states are either well separated or complex phase shifts of each other. 

In general, assumption \ref{A5} is reasonable and its validity in a given setting can be always checked numerically. Even though the energy $E$ can be also invariant under rotations of the coordinate system in particular cases (such as for spherical domains $\D$ and radial symmetric potentials $V$), we can still expect \ref{A5} to hold. The reason is that the corresponding critical direction is typically not an element of $H^1_0(\D)$. For example, if $E(u)=E(u_{\theta})$ with $u_{\theta}(x):=u(\mathbf{R}_{\theta} x)$ for any rotation matrix 
$$
\mathbf{R}_{\theta} = \left(\begin{matrix}
\cos \theta &  -\sin \theta \\
\sin \theta & \cos \theta 
\end{matrix}\right),
$$
then the critical direction of the curve (on which the energy level would remain the same) is given by $\tfrac{\mbox{\scriptsize d}}{\mbox{\scriptsize d}\theta} u(\mathbf{R}_{\theta} x)_{\vert \theta = 0} =-(x \times \nabla)u$.  
Since we cannot expect $\nabla u$ to vanish on $\partial \D$, the function $-(x \times \nabla)u$ is not an element of $H^1_0(\D)$ and, hence, $\gamma^{\prime}(0)=-(x \times \nabla)u$ is not admissible. In fact, this reasoning also implies that $-(x \times \nabla)u$ cannot be an eigenfunction of $E^{\prime\prime}(u)-\lambda \mathcal{I}$ to the eigenvalue $0$ (see Proposition \ref{prop-inf-sup-stability-Wperpu} below for a corresponding connection).
\subsection{Characterization through the Gross--Pitaevskii equation}
Since every ground state $u$ is a critical point of $E$ on $\mathbb{S}$, we can characterize it by the corresponding Euler--Lagrange equations seeking $u\in \mathbb{S}$ with Lagrange multiplier $\lambda \in \R_{>0}$ such that
\begin{align*}
\langle E^{\prime}(u) , v \rangle = \lambda \, \langle M^{\prime}(u) , v \rangle  \qquad \mbox{for all } v\in H^1_0(\D),
\end{align*}
where $E^{\prime}: H^1_0(\D) \rightarrow H^{-1}(\D)$ denotes the Fr\'echet derivative of $E$ and $M: H^1_0(\D) \rightarrow \R$ is the scaled constraint functional given by $M(v):=\tfrac{1}{2}\left(\int_{\D}|v|^2 \dx - 1\right)$. Computing the Fr\'echet derivatives, we obtain
\begin{align*}
\langle M^{\prime}(u) , v \rangle  = \re \int_{\D} u \, \overline{v}\dx = (u,v)_{L^2(\D)}
\end{align*}
and
\begin{align*}
\langle E^{\prime}(u) , v \rangle 
&= (\nablaR  u , \nablaR v)_{\0} + \big((\VR  + \beta |u|^2)u,v\big)_{\0}.
\end{align*}
As a compact notation, we introduce for arbitrary $u\in H^1_0(\D)$, the $u$-linearized approximation of $E^{\prime}$ by $\Acal_{|u|}:H^1_0(\D) \rightarrow H^{-1}(\D)$ with 
\begin{align}
\label{defAcal-u-square}
\langle \Acal_{|u|}v , w \rangle := (\nablaR v , \nablaR w )_{\0} + \big((\VR  + \beta |u|^2)v,w\big)_{\0}
\qquad \mbox{for } v,w \in H^1_0(\D).
\end{align}
 Note that $\Acal_{|u|}$ is a self-adjoint operator on $L^2(\mathcal{D})$ and that it is obviously elliptic with
 \begin{align}
 \label{def-alpha}
 \langle \Acal_{|u|}v,v \rangle  \ge \alpha \, \| v\|_{H^1(\D)}^2 \quad \mbox{for all } v\in H^1_0(\D),
 \end{align}
 where $\alpha = c(\varepsilon)^2=\mathcal{O}(\varepsilon)$ is the square of the constant from \eqref{norm-equivalence-nablaR}.
As $E^{\prime}(v)=\Acal_{|v|}(v)$, we can write the Euler--Lagrange equations for the critical points of $E$ equivalently as: find $u\in \mathbb{S}$ and $\lambda >0$ such that
\begin{align}
 \label{eigen_value_problem_2}
\langle \Acal_{|u|}u , v \rangle = \lambda \, ( u , v )_{L^2(\D)} \qquad \mbox{for all } v\in H^1_0(\D).
\end{align}
The problem can be interpreted as an eigenvalue problem with eigenvector nonlinearity and is known as the {\it Gross--Pitaevskii eigenvalue problem} (GPEVP). In the sense of distributions, problem \eqref{eigen_value_problem_2} equivalently reads: find $\lambda > 0$ and $u \in H_{0}^{1}(\D)$ such that $ \int_{\D}|u|^2 dx =1$ and
\begin{align}
\label{eigen_value_problem_1}
-\Delta u + V\, u - \Omega \, \mathcal{L}_{3}u + \beta \, |u|^2u = \lambda \, u.
\end{align}
The equivalence of \eqref{eigen_value_problem_2} and \eqref{eigen_value_problem_1} is seen by noticing that the real part in front of the integrals can be dropped by using both $v$ and $\ci v$ as test functions.

Any eigenfunction to \eqref{eigen_value_problem_1} that is not a ground state is called a meta-stable {\it excited state} of the Bose--Einstein condensate.

\begin{remark}[Ground state eigenvalue]
An eigenvalue $\lambda$ that belongs to a ground state $u \in \mathbb{S}$ in the sense of \eqref{definition-groundstate} is called a \emph{ground state eigenvalue}. Note however, that it is in general not possible to directly use the GPEVP \eqref{eigen_value_problem_1} as a defining equation for ground states. The reason is that a ground state eigenvalue is not necessarily the smallest eigenvalue of the GPEVP \eqref{eigen_value_problem_1} and an eigenfunction to the smallest eigenvalue can potentially be an excited state, cf. \cite[Section 6.3]{AHP21NumMath}. Only for the case without rotation, i.e. $\Omega=0$, the smallest eigenvalue is guaranteed to coincide with the (unique) ground state eigenvalue \cite{CCM10}.
\end{remark}

With the above notation, we rephrase assumption \ref{A5} (quasi-isolation of ground states) in terms of an inf-sup stability condition that we can exploit in the error analysis. The inf-sup condition is obtained from the property $\tfrac{ \mbox{\scriptsize d}^{2} }{ \mbox{\scriptsize d} t^{2}} E(\gamma(t)) |_{t=0} > 0$ by explicitly computing the derivative and exploiting the Euler--Lagrange equations. The argument is standard, but for completeness the proof of the following proposition is given in the appendix, Section \ref{appendix-inf-sup-stability}.
\begin{proposition}\label{prop-inf-sup-stability-Wperpu} Assume \ref{A1}-\ref{A5}, let $u \in \mathcal{U}$ be a ground state in the sense of \eqref{definition-groundstate} and $\lambda \in \R_{>0}$ the corresponding ground state eigenvalue given by \eqref{eigen_value_problem_2}. If $\mathcal{I} :H^1_0(\D) \rightarrow H^{-1}(\D)$ denotes the canonical identification $\mathcal{I}v:=(v,\cdot)_{\0}$, then the operator $E''(u) - \lm \mathcal{I}$ is inf-sup stable on $\5 $, namely there exists a constant $\alpha_1 > 0$ such that 
\begin{equation}
\label{inf-sup-stability-Wperpu}
 \inf_{w \in \5} \sup_{v\in \5}\frac{|\langle(E''(u)-\lambda \mathcal{I})v,w\rangle|}{\|v\|_{\1}\|w\|_{\1}} \hspace{2mm} \geqslant \hspace{2mm} \alpha_1.
\end{equation}
In particular, $E^{\prime\prime}(u) - \lambda \mathcal{I}$ has a bounded inverse on $\5$, which follows from standard theory for indefinite problems, cf. \cite{babu70}.
\end{proposition}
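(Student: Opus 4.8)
The plan is to reduce the inf-sup condition to the pointwise positivity encoded in Definition~\ref{definition-quasi-isolated-ground-state}, and then to upgrade that pointwise positivity to a uniform bound by exploiting the Fredholm structure of $E''(u)-\lm\Ical$. First I would realize every direction $v\in\5\setminus\{0\}$ as the initial velocity of an admissible curve. Since $v\in\tangentspace{u}$ means $(u,v)_{\0}=0$, the normalized curve $\gamma(t):=(u+tv)/\norm{u+tv}_{L^2(\D)}$ is well defined and smooth near $t=0$, lies on $\mathbb{S}$, and satisfies $\gamma(0)=u$ and $\gamma^{\prime}(0)=v\in(\tangentspace{u}\cap\4)\setminus\{0\}$. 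Differentiating $t\mapsto E(\gamma(t))$ twice and evaluating at $t=0$ produces $\langle E''(u)v,v\rangle+\langle E^{\prime}(u),\gamma^{\prime\prime}(0)\rangle$. Here I would insert the Euler--Lagrange equation \eqref{eigen_value_problem_2} to replace $\langle E^{\prime}(u),\gamma^{\prime\prime}(0)\rangle$ by $\lm\,(u,\gamma^{\prime\prime}(0))_{\0}$, and then differentiate the constraint $M(\gamma(t))\equiv 0$ twice to obtain $(u,\gamma^{\prime\prime}(0))_{\0}=-\norm{v}_{L^2(\D)}^2$. This eliminates the unknown acceleration, is independent of the chosen curve, and yields
\begin{align*}
\frac{\mathrm{d}^{\,2}}{\mathrm{d}t^{2}}E(\gamma(t))\big|_{t=0}=\langle (E''(u)-\lm\Ical)v,v\rangle .
\end{align*}
Assumption~\ref{A5}, via Definition~\ref{definition-quasi-isolated-ground-state}, then states precisely that $\langle (E''(u)-\lm\Ical)v,v\rangle>0$ for every $v\in\5\setminus\{0\}$.

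Next I would expose the operator structure. A direct computation of the second Fr\'echet derivative shows
\begin{align*}
\langle E''(u)v,w\rangle=\langle \Acal_{|u|}v,w\rangle+2\beta\int_{\D}\re(\bar u v)\,\re(\bar u w)\dx ,
\end{align*}
so that $E''(u)-\lm\Ical=\Acal_{|u|}+K$, where the remainder $K$ collects the nonlinear Hessian term together with $-\lm\Ical$. By \eqref{def-alpha} the form $\langle\Acal_{|u|}\cdot,\cdot\rangle$ is bounded, symmetric and $\alpha$-coercive on the closed subspace $\5\subset\3$ (coercivity being inherited by subspaces), while $K$ is bounded and symmetric and, crucially, compact: both $\Ical$ and the nonlinear Hessian term factor through the embedding $\3\hookrightarrow L^{2}(\D)$ (respectively $L^{4}(\D)$), which is compact for $d\le 3$ by Rellich. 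Hence $E''(u)-\lm\Ical$ is a compact, symmetric perturbation of a coercive operator on $\5$, i.e.\ a self-adjoint Fredholm operator of index zero.

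Finally I would upgrade positivity to the quantitative bound. Writing $a_0:=\langle\Acal_{|u|}\cdot,\cdot\rangle$ and $a:=\langle(E''(u)-\lm\Ical)\cdot,\cdot\rangle$ on $\5$, I would study the generalized eigenproblem $a(\phi,\cdot)=\mu\,a_0(\phi,\cdot)$. Representing $k:=a-a_0$ through a self-adjoint operator $T$ in the $a_0$-geometry turns this into $T\phi=(\mu-1)\phi$ with $T$ compact, so the eigenvalues $\mu_j$ are real and can accumulate only at $1$. The pointwise positivity from the first step forces $\mu_j>0$ for all $j$, and since the sole accumulation point is $1>0$, only finitely many $\mu_j$ lie below any threshold; hence $\mu_\ast:=\inf_j\mu_j>0$. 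This gives $a(v,v)\ge\mu_\ast\,a_0(v,v)\ge\mu_\ast\alpha\,\norm{v}_{H^1(\D)}^2$ for all $v\in\5$, i.e.\ genuine coercivity, from which \eqref{inf-sup-stability-Wperpu} follows with $\alpha_1=\mu_\ast\alpha$ by choosing $w=v$ in the supremum; the bounded inverse is then immediate from the standard theory for indefinite problems (cf.~\cite{babu70}). The main obstacle is exactly this last upgrade: in infinite dimensions the strict positivity $a(v,v)>0$ does not by itself produce a uniform constant, and it is the compact-perturbation-of-coercive (Fredholm) structure that rescues the argument by forcing the relevant spectrum to be discrete and bounded away from zero. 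Equivalently, one may phrase the step as: a self-adjoint Fredholm operator of index zero is inf-sup stable as soon as it is injective, and positivity supplies injectivity.
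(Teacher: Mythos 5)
Your proposal is correct, and while its first half coincides with the paper's argument, its second half takes a genuinely different route. The paper's appendix proof obtains the identity $\tfrac{\mathrm{d}^2}{\mathrm{d}t^2}E(\gamma(t))\vert_{t=0}=\langle(E''(u)-\lm\Ical)w,w\rangle$ by packaging the computation into the Lagrangian $L=E-\lm M$ and using $L'(u)=0$; your explicit elimination of $\gamma''(0)$ via the constraint and the Euler--Lagrange equation is the same computation unpacked, and your normalized curve $\gamma(t)=(u+tv)/\norm{u+tv}_{\0}$ usefully makes explicit that every direction $v\in\5\setminus\{0\}$ is in fact realized by an admissible curve --- a point Definition \ref{definition-quasi-isolated-ground-state} quantifies over but the paper's proof leaves implicit. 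From the resulting pointwise positivity onward the paths diverge: the paper combines injectivity with the G{\aa}rding inequality $\langle(E''(u)-\lm\Ical)w,w\rangle\ge\alpha\norm{w}_{\1}^2-\lm\norm{w}_{\0}^2$ and concludes \eqref{inf-sup-stability-Wperpu} by the (unstated) standard compactness/Fredholm-alternative contradiction argument, whereas you diagonalize the compact symmetric perturbation $T$ in the geometry of $a_0=\langle\Acal_{|u|}\cdot,\cdot\rangle$ and derive genuine coercivity $\langle(E''(u)-\lm\Ical)v,v\rangle\ge\alpha_1\norm{v}_{\1}^2$ on $\5$, of which the inf-sup bound is an immediate consequence by taking $w=v$. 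Your route exploits the symmetry of the Hessian, yields a strictly stronger conclusion (coercivity, hence bounded invertibility by Lax--Milgram rather than Babu\v{s}ka theory), and aligns with the min-max reasoning the paper itself uses in Section \ref{exp-second-derivative} to verify \ref{A5} numerically; the paper's route is shorter and, needing only injectivity plus G{\aa}rding, would survive a non-symmetric perturbation where coercivity is unavailable. One cosmetic correction to your final step: since $T$ is compact on an infinite-dimensional space, $\sigma(I+T)$ contains the point $1$ even when it is not an eigenvalue, so the coercivity constant should be $\min(1,\mu_\ast)$ rather than $\mu_\ast=\inf_j\mu_j$ --- equivalently, take $\mu_\ast:=\min\sigma(I+T)$, which your accumulation-at-one argument shows is positive; the conclusion is unaffected.
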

Similar assumptions such as the inf-sup condition \eqref{inf-sup-stability-Wperpu} can be frequently found in the literature on the error analysis of finite element approximations for energy minimization problems in quantum physics, cf. \cite{CGHY13,CGHZ11,CHZ11,MadayTurinici2000}.

Using the notation in \eqref{defAcal-u-square}, $E^{\prime\prime}(u)$ in \eqref{inf-sup-stability-Wperpu} can be computed as
\begin{align} 
\label{sd}
\langle E''(u)v,w \rangle =  \langle \Acal_{|u|}v,w \rangle +  2\, \beta \, ( \re (u \overline{v}) \, u,w)_{\0}.  
\end{align}
By selecting $v = \ci u$ in \eqref{sd} and exploiting that $\ci u$ is an eigenfunction of $ \Acal_{|u|}$ with eigenvalue $\lambda$, we have $ \langle E''(u) \ci u,w \rangle = \lambda \, (\ci u,w )_{\0}$ for all $w \in \3$. Hence, $E''(u)-\lambda \mathcal{I}$ is singular on $\mbox{span}\{\ci u\}$ (the directions of complex phase shifts) as previously predicted in Section \ref{uniqueness-section}.

We conclude this section with a natural regularity statement for ground states that we will exploit frequently in the convergence analysis.
\begin{lemma}[$H^{2}$-regularity of ground states]
\label{gs-h2-regularity}
Assume \ref{A1}-\ref{A3}. Let $u$ be a ground state and $\lm >0$ the corresponding ground state eigenvalue as in \eqref{eigen_value_problem_1}. Then, $ u \in C^{0}(\overline{\D}) \cap H^{2}(\D).$
\end{lemma}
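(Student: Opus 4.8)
The plan is to establish the two regularity claims separately, starting from the Gross--Pitaevskii eigenvalue equation \eqref{eigen_value_problem_1}, which in the sense of distributions reads $-\Delta u = \lambda u - V u + \Omega\, \mathcal{L}_3 u - \beta |u|^2 u =: f$. The strategy is a standard bootstrap argument: first show $u \in H^2(\D)$ by verifying that the right-hand side $f$ lies in $L^2(\D)$ and then invoking elliptic regularity for the Laplacian on a convex polygonal domain; afterwards obtain continuity up to the boundary by a Sobolev embedding in $d=2$ and one further bootstrap step in $d=3$.

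First I would check that $f \in L^2(\D)$. We know a priori that $u \in H^1_0(\D)$, so $u \in L^2(\D)$; since $V \in L^\infty(\D)$ by \ref{A2}, the term $V u \in L^2(\D)$, and likewise $\lambda u \in L^2(\D)$. The angular momentum term $\Omega\, \mathcal{L}_3 u = -\ci \Omega (x_1 \partial_{x_2} - x_2 \partial_{x_1}) u$ is controlled because the coordinates $x_1, x_2$ are bounded on the bounded domain $\D$ (by \ref{A1}) and $\nabla u \in L^2(\D)$, so this term is in $L^2(\D)$ as well. The cubic term $\beta |u|^2 u$ requires $u \in L^6(\D)$; for $d=2,3$ this follows from the Sobolev embedding $H^1(\D) \hookrightarrow L^6(\D)$ (valid up to $d=3$). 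Hence $f \in L^2(\D)$. By elliptic regularity on the bounded convex domain $\D$ with homogeneous Dirichlet data (the convexity from \ref{A1} is exactly what guarantees $H^2$-regularity for $-\Delta$ on a polygonal domain, avoiding reentrant-corner singularities), we conclude $u \in H^2(\D) \cap H^1_0(\D)$.

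For the continuity claim, in the case $d=2$ the Sobolev embedding $H^2(\D) \hookrightarrow C^0(\overline{\D})$ holds directly and the proof is complete. In the case $d=3$ the embedding $H^2(\D) \hookrightarrow C^0(\overline{\D})$ is borderline and does not hold, so I would perform one additional bootstrap: having upgraded $u$ to $H^2(\D)$, the improved Sobolev regularity $u \in W^{1,p}(\D)$ for higher $p$ (indeed $u \in L^\infty$ is already available from $H^2 \hookrightarrow L^\infty$ fails, but $H^2(\D)\hookrightarrow W^{1,q}$ for all $q<\infty$ in $d=3$) lets me re-examine $f$ and show $f \in L^p(\D)$ for some $p > d/2 = 3/2$, which by $L^p$-elliptic regularity gives $u \in W^{2,p}(\D)$ with $p > 3/2$, and then $W^{2,p}(\D) \hookrightarrow C^0(\overline{\D})$.

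The main obstacle I anticipate is twofold: handling the nonlinear cubic term $\beta |u|^2 u$ carefully through the bootstrap (making sure the integrability gained in each step is genuinely enough to feed the next elliptic estimate, especially near the $d=3$ borderline), and invoking the correct elliptic regularity theory for the polygonal rather than smooth domain. The convexity assumption \ref{A1} is essential here and must be used explicitly, since $H^2$-regularity up to the boundary can fail on non-convex polygons. Apart from these points the argument is routine, and I expect the statement to follow cleanly once $f \in L^2(\D)$ is secured.
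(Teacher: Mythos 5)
Your derivation of $u \in H^2(\D)$ is, in substance, exactly the paper's proof: rewrite \eqref{eigen_value_problem_1} as a Poisson problem $-\Delta u = g$, verify $g \in L^2(\D)$ using $V \in L^{\infty}(\D)$ from \ref{A2}, the boundedness of $x_1,x_2$ on the bounded domain for the angular momentum term, and the Sobolev embedding $H^1_0(\D) \hookrightarrow L^6(\D)$ (valid for $d\le 3$) for the cubic term, and then invoke $H^2$-regularity of the Dirichlet Laplacian on the convex domain \ref{A1}. Up to this point the argument is correct and identical to the paper's.

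Your handling of the continuity claim for $d=3$, however, rests on a false premise. The embedding $H^2(\D) \hookrightarrow C^{0,\alpha}(\overline{\D})$ holds whenever $2 - d/2 > 0$, i.e.\ for $d \le 3$, with $\alpha = 1/2$ in dimension three (the domain, being bounded and convex, is Lipschitz, so the embedding holds up to the boundary); the borderline dimension is $d=4$, not $d=3$. Hence $u \in C^0(\overline{\D})$ follows directly from $u \in H^2(\D)$ in both $d=2$ and $d=3$, which is precisely how the paper concludes. Your parenthetical claim that $H^2 \hookrightarrow L^{\infty}$ fails in $d=3$ is likewise incorrect. Moreover, the substitute bootstrap you sketch is on shakier ground than the step it replaces: $W^{2,p}$-regularity with $p \neq 2$ for the Dirichlet Laplacian on a convex polygonal or polyhedral domain is \emph{not} standard in the way the $p=2$ result (Kadlec/Grisvard) is --- in three dimensions the admissible range of $p$ depends on the edge and vertex openings, so a generic appeal to ``$L^p$-elliptic regularity'' under assumption \ref{A1} alone would itself require justification. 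Since the correct embedding renders this entire branch superfluous, the repair is simply to delete it; with that deletion your proof coincides with the paper's.
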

\begin{proof}
The proof is straightforward by writing \eqref{eigen_value_problem_1} as 
\begin{equation}
\label{h2_1}
 - \Delta u = \lm u -V u+\Omega \mathcal{L}_{3}u -\beta |u|^2u = :g\,.
 \end{equation}
Our assumptions on $V$ and the Sobolev embedding $H^{1}_{0}(\D) \hookrightarrow L^{6}(\D)$ for $d =2,3$, yield $g \in L^{2}(\D)$. Hence, together with the convexity of $\D$, standard elliptic regularity theory (cf. \cite[Theorem 3.2.2]{cia2002}) for the Poisson problem $-\Delta u= g$ guarantees $ u \in C^{0}(\overline{\D}) \cap H^2(\D)$. 
\end{proof}


\section{Finite element discretization and main results}
\label{fem-main-results}
In the following we investigate the accuracy of discrete ground states to the minimization problem \eqref{definition-groundstate} in finite element spaces. For that, we consider a shape regular family $\{ \mathcal{T}_h\}$  of nested conforming triangulations over the domain $\mathcal{D}$ with mesh size $h >0$. On each mesh $\mathcal{T}_h$, the corresponding $\mathbb{P}^k$-Lagrange finite element space is given by
 $$  V_{h,k} = \{ v \in H^{1}_{0}(\D) \cap C^{0}(\overline{\mathcal{D}}) \vert \,\, v|_{K} \in \mathbb{P}^{k}(K) \hspace{1mm}\mbox{ for all } K \in \mathcal{T}_{h}\},$$
i.e., $V_{h,k}$ is the space of continuous functions in $H_{0}^{1}(\D)$ such that their restriction to any element $K$ of $\mathcal{T}_h$ is a polynomial of degree not greater than $k$. For simplicity we shall mainly denote $V_{h}:= V_{h,k}\,$ as long as the specific value of $k$ is not crucial. If we want to stress the polynomial order, we will use the index \quotes{$h,k$} instead of \quotes{$h$}.

Let us now consider the finite element approximations for ground states of the energy minimization problem \eqref{definition-groundstate}. A ground state in the finite element space $V_h$ is characterized by the minimization problem seeking $u_h \in V_h \cap \mathbb{S}$ such that
\begin{align}
\label{weak_min_problem}
 E(u_{h}) = \inf_{v \in V_h \cap \mathbb{S} } E(v). 
\end{align}
In the following we shall call such minimizers {\it discrete ground states}. As in the continuous case,  discrete ground states $u_{h}$ can be at most unique up to constant phase shifts. 
We denote the set of all discrete ground states by
 \begin{align}
 \label{Uh}
  \mathcal{U}_h := \{ u_{h} \in V_{h} \cap \mathbb{S} \, \vert \, E(u_{h}) = \min_{v \in V_{h} \cap \mathbb{S}} E(v) \} .
  \end{align}
Analogously as in the continuous setting, any minimizer $u_h$ of \eqref{weak_min_problem} can be expressed through the corresponding Euler--Lagrange equations. Hence, there exists $\lambda_h \in \mathbb{R}_{>0}$ such that it holds
\begin{align}
\label{discrete_eigen_value_problem}
(\nablaR u_{h}, \nablaR v)_{\0} + (\VR u_{h} +\beta |u_{h}|^2u_{h},v)_{\0} = \lambda_{h} \, (u_{h},v)_{\0}
\qquad \mbox{for all } v \in V_{h}.
\end{align} 
\begin{remark}[Same phase]
Due to missing uniqueness caused by phase shifts, we can only make a reasonable comparison between an actual ground state $u$ and a discrete ground state $u_h$ if their phases are aligned. In the following we will call $u$ and $u_h$ to be in the \emph{same phase} if their  complex $L^2$-inner product is a value on the positive real axis, i.e. $\int_{\D} u_h \overline{u} \dx \in \R_{\ge 0}$. 
\end{remark}
 To formulate our main results and to keep the proofs compact, we write from now on \quotes{$ a \lesssim b$} to abbreviate \quotes{$a \leq C b $} for a constant $C>0$ that may depend on $\beta$, $\D$,  $\Omega$, $V$ and $u$ but is independent of the mesh size $h$. 
 
We are now prepared to present our main theorems. The first result guarantees that for each discrete ground state $u_h \in \mathcal{U}_h$ (on a sufficiently fine mesh) there is a $H^1$-neighbourhood of $u_h$ in which we will find a unique exact ground state $u \in \mathcal{U}$ that is in the same phase as $u_h$.
Furthermore, such $u_h$ is a quasi-best approximation of $u$ in the $H^1$-norm. 

\begin{theorem} \label{approx_theorem} Assume \ref{A1}-\ref{A4}, then for every sufficiently small $\delta>0$, there is a $h_{\delta}>0$ such that for each $h\le h_{\delta}$ and each $u_h \in  \mathcal{U}_{h}$, there exists a ground state $u \in \mathcal{U}$ that is in the same phase, i.e.,  $\int_{\D}u_h \overline{u} \dx \in \R_{\ge 0}$, and such that 
\begin{align*}
\| u - u_h \|_{\1} \leq \delta \,.
\end{align*}
Furthermore, the error in energy is bounded by
\begin{align}
\label{2.-1}
\left| E(u_h) - E(u) \right| \lesssim \|u_h -u\|_{\1}^2
\end{align}
and distance between the ground state eigenvalue $\lambda$ to $u \in \mathcal{U}$ and the discrete ground state eigenvalue $\lambda_h$ to $u_h \in \mathcal{U}_h$ can be bounded by
\begin{align}
\label{2.0}
\left| \lambda_h - \lambda \right| \lesssim \|u_h -u\|_{\1}^2 +  ( u_h - u ,  u  |u|^2 )_{\0}.
\end{align}
If additionally assumption \ref{A5} is satisfied, then $u$ from the first part of the theorem is \emph{unique} (in the $\delta$-neighborhood); $u_h$ is a $H^1$-quasi-best approximation to $u$
\begin{align}
\label{2.1}
\|u-u_h\|_{H^{1}(\D)} \lesssim \inf_{v_h \in V_h}\|u-v_h\|_{H^{1}(\D)}
\end{align}
and it holds the $L^{2}$-error estimate
\begin{align}
\label{2.2}
\|u-u_h\|_{L^{2}(\D)} \lesssim h \|u-u_h\|_{H^{1}(\D)}. 
\end{align}
\end{theorem}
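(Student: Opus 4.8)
\emph{Step 1: existence of a nearby, same-phase ground state.} I would first produce the exact ground state $u$ by a compactness argument, postponing uniqueness (which needs \ref{A5}) to Step~3. Arguing by contradiction, suppose there were $\delta_0>0$, a sequence $h_n\to 0$ and discrete ground states $u_{h_n}\in\mathcal{U}_{h_n}$ with $\|u_{h_n}-u\|_{\1}>\delta_0$ for every same-phase $u\in\mathcal{U}$. Using density of $\bigcup_h V_h$ in $\3$ together with renormalization onto $\mathbb{S}$, one shows $E(u_{h_n})\to E_0:=\inf_{\mathbb{S}}E$; the reformulation \eqref{energy_function} and the norm equivalence \eqref{norm-equivalence-nablaR} then bound $u_{h_n}$ in $\3$, so along a subsequence $u_{h_n}\rightharpoonup u^{\ast}$ in $\3$, strongly in $\0$ and in $L^4(\D)$ (compact embedding), with $u^{\ast}\in\mathbb{S}$. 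Weak lower semicontinuity forces $E(u^{\ast})=E_0$, i.e. $u^{\ast}\in\mathcal{U}$, and convergence of the quartic term upgrades the weak convergence to strong $\1$-convergence $u_{h_n}\to u^{\ast}$. Choosing the phase so that $\int_{\D}u_{h_n}\overline{u^{\ast}}\dx\ge 0$ and using $\int_{\D}u_{h_n}\overline{u^{\ast}}\dx\to 1$ yields a same-phase ground state arbitrarily $\1$-close to $u_{h_n}$, contradicting the assumption.

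\emph{Step 2: energy and eigenvalue errors.} Since $E$ is quartic, its Taylor expansion about $u$ terminates. Writing $e:=u_h-u$ and using the Euler--Lagrange identity $\langle E'(u),e\rangle=\lm\,(u,e)_{\0}$ together with the algebraic relation $(u,e)_{\0}=-\tfrac12\|e\|_{\0}^2$ (valid because $u,u_h\in\mathbb{S}$ are in the same phase), the first-order term is already $\mathcal{O}(\|e\|_{\1}^2)$; the quadratic, cubic and quartic contributions are controlled by $\|e\|_{\1}^2$ using $u\in L^{\infty}(\D)$ (Lemma~\ref{gs-h2-regularity}) and the boundedness of $\|e\|_{\1}$, which gives \eqref{2.-1}. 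For \eqref{2.0} I would start from $\lm=\langle\Acal_{|u|}u,u\rangle$ and $\lm_h=\langle\Acal_{|u_h|}u_h,u_h\rangle$, split $\Acal_{|u_h|}=\Acal_{|u|}+\beta(|u_h|^2-|u|^2)$, expand in $e$, and exploit the self-adjointness of $\Acal_{|u|}$ and the eigenvalue equation. The only genuinely first-order contribution is $2\beta\,(u_h-u,u|u|^2)_{\0}$, kept explicitly in \eqref{2.0}; everything else is $\mathcal{O}(\|e\|_{\1}^2)$.

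\emph{Step 3: uniqueness and $H^1$ quasi-optimality.} Assuming \ref{A5}, uniqueness of the same-phase $u$ in $\mathbb{B}_{\delta}(u_h)$ follows from quasi-isolation, since any competing same-phase ground state would be a phase shift and the same-phase normalization pins the phase. For \eqref{2.1} I would decompose the error $\0$-orthogonally via $\3=\mathrm{span}_{\R}\{u,\ci u\}\oplus\5$ as $e=\mu_1 u+w$ with $w\in\5$, observing that the $\ci u$-component vanishes by the same-phase condition and that $\mu_1=(u,e)_{\0}=\mathcal{O}(\|e\|_{\0}^2)$ is higher order, so $\|e\|_{\1}\simeq\|w\|_{\1}$ up to quadratic terms. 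To bound $\|w\|_{\1}$ I would combine the continuous inf-sup stability of $E''(u)-\lm\Ical$ on $\5$ from Proposition~\ref{prop-inf-sup-stability-Wperpu} with the Galerkin-type orthogonality obtained by subtracting the exact and discrete equations: for all $v_h\in V_h$, $\langle(E''(u)-\lm\Ical)e,v_h\rangle=(\lm_h-\lm)(u_h,v_h)_{\0}-\langle\tilde R(e),v_h\rangle$ with $\|\tilde R(e)\|_{\2}\lesssim\|e\|_{\1}^2$; here $(u_h,v_h)_{\0}$ reduces to $(e,v_h)_{\0}$ on test functions orthogonal to $u$, rendering the eigenvalue term quadratic. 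The key missing ingredient is a \emph{uniform discrete inf-sup stability} on the discrete tangent space, which I would derive by perturbation from Proposition~\ref{prop-inf-sup-stability-Wperpu} using the strong $\1$-convergence of Step~1; feeding this into the abstract nonlinear-approximation theory in the spirit of the cited Kohn--Sham analyses, and absorbing higher-order terms for $h$ small, delivers $\|w\|_{\1}\lesssim\inf_{v_h\in V_h}\|u-v_h\|_{\1}$, hence \eqref{2.1}. The main obstacle is precisely here: the operator $E''(u)-\lm\Ical$ is invertible only after removing the singular direction $\ci u$, yet $u_h$ and the test functions live in $V_h\not\subset\5$; controlling the mismatch between the discrete tangent space and $\5$ and transferring the continuous inf-sup to the discrete level uniformly in $h$ is the delicate point.

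\emph{Step 4: the $L^2$-estimate.} Finally I would obtain \eqref{2.2} by an Aubin--Nitsche duality argument on $\5$: introduce the dual solution $z\in\5$ of $(E''(u)-\lm\Ical)z=\Ical w$ restricted to $\5$, well-posed by Proposition~\ref{prop-inf-sup-stability-Wperpu}, whose principal part is $-\Delta$, so by convexity of $\D$ and the elliptic regularity of Lemma~\ref{gs-h2-regularity} it satisfies $\|z\|_{H^2(\D)}\lesssim\|w\|_{\0}$. Testing with $w$, inserting $z_h\in V_h$ through the Galerkin orthogonality of Step~3, and using $\|z-z_h\|_{\1}\lesssim h\,\|z\|_{H^2(\D)}$ together with the higher-order remainder bounds yields $\|w\|_{\0}^2\lesssim h\,\|e\|_{\1}\,\|w\|_{\0}+\text{h.o.t.}$; since $\|e\|_{\0}\simeq\|w\|_{\0}$ up to quadratic terms, \eqref{2.2} follows.
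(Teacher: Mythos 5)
Your Steps 1, 2 and 4 follow essentially the same route as the paper: the compactness/contradiction argument with a small phase rotation (Proposition \ref{convergence_theorem}), the algebraic identities behind Lemmas \ref{energy_theorem} and \ref{lambdah-lambda} (including isolating $2\beta\,(u_h-u,u|u|^2)_{\0}$ as the only first-order term), and the Aubin--Nitsche duality on $\5$ with $H^2$-regularity of the dual solution. Your Step 3 is where you genuinely diverge: you propose a direct error-equation argument --- Galerkin-type orthogonality for $e=u_h-u$, the quadratic remainder $\tilde R(e)$, discrete inf-sup, and absorption of higher-order terms for small $h$ --- in the style of the Kohn--Sham analyses, whereas the paper instead embeds the eigenvalue into the unknowns via the extended operator $\Jcal(\sigma,v)$ on $\R\times\4$, verifies that $\Jcal'(\lm,u)$ is an isomorphism by Brezzi saddle-point theory (Lemma \ref{iso-of-J'}, with kernel $\5$), and invokes the abstract Pousin--Rappaz theorem (Theorem \ref{unique_discrete_solution}). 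The abstract route buys two things at once: existence and \emph{uniqueness} of a discrete stationary state near $u$ (which the paper then identifies with the given $u_h$), and a clean quasi-optimality constant; your direct route avoids the identification step, since you estimate the actual pair $(u,u_h)$, but it must carry the smallness of $\|e\|_{\1}$ from Step 1 explicitly through the absorption argument. A small bonus of your version: the identity $(u,e)_{\0}=-\tfrac12\|e\|_{\0}^2$ needs only $u,u_h\in\mathbb{S}$ and the real inner product --- the same-phase hypothesis you cite for it is unnecessary (same phase is what kills the $\ci u$-component, not the $u$-component).

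Three supporting facts in your sketch are asserted rather than obtained, and they are precisely where the paper invests its technical work. First, the uniform discrete inf-sup on $V_h\cap\5$ does not follow ``by perturbation using the strong $\1$-convergence of Step 1'': the operator $E''(u)-\lm\Ical$ is fixed (it does not involve $u_h$), so convergence of $u_h$ is irrelevant; the perturbation is in the \emph{spaces}, and the paper's Proposition \ref{discrete-inf-sup-stability} resolves it by a Schatz-type duality argument that hinges on the $H^2$-regularity of Lemma \ref{H2-lemma} and the $\mathcal{O}(h)$ approximation of the dual corrector. Second, your conclusion $\|w\|_{\1}\lesssim\inf_{v_h\in V_h}\|u-v_h\|_{\1}$ silently passes from an infimum over the constrained space $V_h\cap\4$ (or $V_h\cap\5$) to one over all of $V_h$; this requires the quasi-best-approximation property of Lemma \ref{projection-property-lemma}, which you never construct. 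Third, in Step 4 you cite convexity and Lemma \ref{gs-h2-regularity} for $\|z\|_{H^2(\D)}\lesssim\|w\|_{\0}$, but the dual solution satisfies its equation only against test functions in $\5$, so elliptic regularity is not directly applicable; one must first extend the variational identity to all of $H^1_0(\D)$ using the $L^2$-orthogonal decomposition \eqref{decom} (the $\tilde{f_{\phi}}$ construction in the proof of Lemma \ref{H2-lemma}). All three gaps are fixable by standard arguments --- indeed the paper fixes them --- but as stated your sketch presupposes rather than proves exactly the delicate points you yourself identify.
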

The proof of Theorem \ref{approx_theorem} is postponed to Section \ref{proof-of-approx-theorem} where we will also state additional complementary results that address the general approximability of all ground states $u \in \mathcal{U}$ (which is important if the ground state density is not unqiue).\\[0.4em]
Theorem \ref{approx_theorem} directly yields optimal convergence rates (in $L^2(\D)$, $H^1(\D)$ and the energy error) for Lagrange spaces of arbitrary polynomial order $\mathbb{P}^k$, provided that the limiting ground state $u$ is sufficiently smooth.  However, it does not contain a corresponding explicit estimate for the error in the ground state eigenvalue. For this we require an additional regularity assumption. The arising optimal convergence rates together with the necessary assumptions are summarized in the following result.
\begin{theorem} \label{order_theorem} Assume \ref{A1}-\ref{A5} and let $(\lambda_{h,k},u_{h,k})$ denote a discrete ground state pair of \eqref{discrete_eigen_value_problem} in $\mathbb{R}_{>0} \times \, \mathcal{U}_{h,k}$ for $k \in \mathbb{N}_{>0}$ and sufficiently small $h$. Let further $(\lm ,u ) \in \mathbb{R}_{>0} \times \, \mathcal{U}$ be the corresponding (uniquely characterized) exact ground state pair with $u_{h,k} \in \tangentspace{\ci u}$ from Theorem \ref{approx_theorem}. If $u \in H^{k+1}(\D)$, then it holds
\begin{align}
\label{pk_1} \|u-u_{h,k}\|_{\0} + h \|u-u_{h,k}\|_{\1} \lesssim h^{k+1}, \hspace{30mm}
\end{align}
and for the error in energy
\begin{align}
\label{pk_2}  | E(u_{h,k}) - E(u) | \lesssim h^{2k}.
\end{align} 
Finally, let $w_{u|u|^2} \in \5$ denote the unique solution to the dual problem
\begin{equation*}
 \langle(E''(u)-\lm \mathcal{I}) w_{u|u|^2} ,\varphi \rangle = \langle \mathcal{I} (u  |u|^2)  ,\varphi \rangle \qquad \mbox{ for all } \varphi \in \5
 \end{equation*}
 (which exists by Proposition \ref{prop-inf-sup-stability-Wperpu}). If $w_{u |u|^2}$ is sufficiently smooth in the sense that $w_{u|u|^2} \in H^{k+1}(\D)$ with the corresponding regularity estimate
 \begin{align}
 \label{adjoint2.2-u3}
 \| w_{u|u|^2} \|_{H^{k+1}(\D)} \lesssim \| \, |u|^3 \|_{H^{k-1}(\D)},
 \end{align}
 then it also holds the following sharp error estimate for the eigenvalue
\begin{align}
\label{pk_3} |\lambda - \lambda_{h,k}| \lesssim h^{2k}.
\end{align} 
\end{theorem}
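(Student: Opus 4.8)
The plan is to read off \eqref{pk_1} and \eqref{pk_2} directly from Theorem~\ref{approx_theorem} together with standard Lagrange interpolation, and to concentrate the genuine work on the eigenvalue estimate \eqref{pk_3}, which needs a duality (Aubin--Nitsche) argument. Throughout I write $e := u_{h,k}-u$. Using that $u\in H^{k+1}(\D)$ and the nodal interpolant $I_h u\in V_{h,k}$, the interpolation bound $\inf_{v_h\in V_{h,k}}\|u-v_h\|_{\1}\le\|u-I_h u\|_{\1}\lesssim h^{k}\|u\|_{H^{k+1}(\D)}$ combined with the quasi-best property \eqref{2.1} gives $\|e\|_{\1}\lesssim h^{k}$; feeding this into \eqref{2.2} yields $\|e\|_{\0}\lesssim h\,\|e\|_{\1}\lesssim h^{k+1}$, and adding the two bounds proves \eqref{pk_1}. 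The energy bound \eqref{pk_2} is then immediate from \eqref{2.-1}, since $|E(u_{h,k})-E(u)|\lesssim\|e\|_{\1}^2\lesssim h^{2k}$.

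For the eigenvalue I start from \eqref{2.0}. The contribution $\|e\|_{\1}^2$ is already $\lesssim h^{2k}$, so the task is to show that $(e,u\,|u|^2)_{\0}$ is of order $h^{2k}$ rather than the order $h^{k+1}$ produced by the crude Cauchy--Schwarz bound $\|e\|_{\0}\,\||u|^3\|_{\0}$. First I would decompose the error in the complex $L^2$-sense. Since $u_{h,k}\in\tangentspace{\ci u}$ by hypothesis and $u\in\tangentspace{\ci u}$ because $(\ci u,u)_{\0}=0$, we have $e\in\tangentspace{\ci u}$; writing $e=e^{\perp}+(u,e)_{\0}\,u$ with $e^{\perp}\in\5$, the normalization $\|u\|_{\0}=\|u_{h,k}\|_{\0}=1$ forces $(u,e)_{\0}=-\tfrac12\|e\|_{\0}^2$, so the $u$-component of the error is quadratically small and contributes only at order $\|e\|_{\0}^2\lesssim h^{2(k+1)}$. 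It thus remains to estimate $(e^{\perp},u\,|u|^2)_{\0}$.

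For this I invoke the dual problem. As $e^{\perp}\in\5$ and $E''(u)-\lm\mathcal{I}$ is self-adjoint, the defining equation for $w_{u|u|^2}$ gives $(e^{\perp},u\,|u|^2)_{\0}=\langle(E''(u)-\lm\mathcal{I})e^{\perp},w_{u|u|^2}\rangle$; replacing $e^{\perp}$ by $e$ costs the term $(u,e)_{\0}\langle(E''(u)-\lm\mathcal{I})u,w_{u|u|^2}\rangle$, which by \eqref{sd} and $\Acal_{|u|}u=\lm u$ equals $2\beta(u,e)_{\0}(u\,|u|^2,w_{u|u|^2})_{\0}=O(\|e\|_{\0}^2)$. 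The remaining quantity $\langle(E''(u)-\lm\mathcal{I})e,w_{u|u|^2}\rangle$ is split, for $w_h:=I_h w_{u|u|^2}\in V_{h,k}$, into $\langle(E''(u)-\lm\mathcal{I})e,w_{u|u|^2}-w_h\rangle$ and $\langle(E''(u)-\lm\mathcal{I})e,w_h\rangle$. The first piece is bounded by $\|e\|_{\1}\|w_{u|u|^2}-w_h\|_{\1}\lesssim h^{k}\cdot h^{k}=h^{2k}$, using $w_{u|u|^2}\in H^{k+1}(\D)$ and the regularity estimate \eqref{adjoint2.2-u3}. For the second piece I establish a perturbed Galerkin orthogonality: subtracting the exact Euler--Lagrange equation \eqref{eigen_value_problem_2} from the discrete one \eqref{discrete_eigen_value_problem} and Taylor-expanding $E'$ about $u$ yields, for every $v\in V_{h,k}$, the identity $\langle(E''(u)-\lm\mathcal{I})e,v\rangle=-\langle N(e),v\rangle+(\lm_{h,k}-\lm)(u_{h,k},v)_{\0}$, where $N(e)=\beta\big(2u|e|^2+\bar u e^2+|e|^2 e\big)$ collects the quadratic and cubic remainders of the nonlinearity.

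The final step, and the main obstacle, is to extract a bound from this identity without circularity, since the eigenvalue defect $\lm_{h,k}-\lm$ reappears on the right-hand side. The nonlinear remainder is harmless: $|\langle N(e),w_h\rangle|\lesssim\|e\|_{\0}^2+\|e\|_{\1}^2\|e\|_{\0}\lesssim h^{2(k+1)}$, using that $w_h$ is bounded in $L^\infty(\D)$ (because $w_{u|u|^2}\in H^{k+1}(\D)\hookrightarrow C^0(\overline{\D})$) and the embedding $\1\hookrightarrow L^4(\D)$. The delicate term $(\lm_{h,k}-\lm)(u_{h,k},w_h)_{\0}$ is tamed by exploiting $w_{u|u|^2}\in\5\subset\tangentspace{u}$, i.e.\ $(u,w_{u|u|^2})_{\0}=0$: writing $(u_{h,k},w_h)_{\0}=(e,w_h)_{\0}+(u,w_h-w_{u|u|^2})_{\0}$ and using the $L^2$-interpolation estimate for $w_h$ shows $|(u_{h,k},w_h)_{\0}|\lesssim h^{k+1}$. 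Assembling all contributions, \eqref{2.0} reduces to $|\lm_{h,k}-\lm|\lesssim h^{2k}+C\,h^{k+1}|\lm_{h,k}-\lm|$, and for $h\le h_{\delta}$ small enough the last term is absorbed into the left-hand side, which gives \eqref{pk_3}. I expect the careful bookkeeping of the remainder $N(e)$ and the self-consistent absorption of $\lm_{h,k}-\lm$ to be the points requiring the most care.
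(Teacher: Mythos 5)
Your proposal is correct, and for \eqref{pk_1} and \eqref{pk_2} it coincides with the paper: interpolation of $u\in H^{k+1}(\D)$ combined with \eqref{2.1}, \eqref{2.2} and \eqref{2.-1}. For the eigenvalue bound \eqref{pk_3} you use the same central idea as the paper --- a duality argument with exactly the dual problem \eqref{adjoint2.1-u3} to upgrade the term $(u_{h,k}-u,\,u|u|^2)_{\0}$ in \eqref{2.0} from $\mathcal{O}(h^{k+1})$ to $\mathcal{O}(h^{2k})$ --- but your implementation differs in three genuine respects. First, where the paper transports the machinery of the $L^2$-proof wholesale (the decomposition of Lemma \ref{error_decomp}, the constrained projection $P_h$ onto $V_h\cap\5$, and the mean-value form $E''(u+\delta(u_{h,k}-u))$ controlled via Lemma \ref{fourth}, assembled in the identity \eqref{analyze1.1-new}), you instead derive a perturbed Galerkin orthogonality by Taylor-expanding $E'$ \emph{exactly}, with the explicit cubic remainder $N(e)=\beta(2u|e|^2+\bar u e^2+|e|^2e)$; since $E'$ is a cubic polynomial in $(u,\bar u)$ this expansion is exact, your formula for $N(e)$ is algebraically correct, and it lets you test with the plain nodal interpolant $I_h w_{u|u|^2}\in V_{h,k}$ rather than the constrained projection, sidestepping Lemma \ref{projection-property-lemma} in this step because your identity holds for all of $V_{h,k}$. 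Second, your observation that $(u,e)_{\0}=-\tfrac12\|e\|_{\0}^2$ exactly is a mild sharpening of the paper's bound $|c(v)|\le\|e\|_{\0}^2$ from Lemma \ref{error_decomp}, and your computation $\langle(E''(u)-\lm\Ical)u,w\rangle=2\beta(u|u|^2,w)_{\0}$ via \eqref{sd} correctly accounts for the cost of replacing $e^{\perp}$ by $e$. Third, for the self-referential term $(\lm_{h,k}-\lm)(u_{h,k},w_h)_{\0}$ you use a fixed-point absorption ($|\lm_{h,k}-\lm|\lesssim h^{2k}+h^{k+1}|\lm_{h,k}-\lm|$, absorbed for $h$ small), whereas the paper simply inserts the already-established suboptimal bound \eqref{suboptimal-estimate}, $|\lm_{h,k}-\lm|\lesssim h^{k+1}$, to get $\mathcal{O}(h^{2k+1})$ directly; both are valid, the paper's being marginally simpler and yours being self-contained at this point of the argument. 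Your exploitation of $(u,w_{u|u|^2})_{\0}=0$ to bound $|(u_{h,k},w_h)_{\0}|\lesssim h^{k+1}$ is the same mechanism the paper uses when it splits $\langle\Ical u_{h,k},P_h w_{u|u|^2}-w_{u|u|^2}\rangle$ off from $\langle\Ical(u_{h,k}-u),w_{u|u|^2}\rangle$. In short: same strategy, equivalent conclusion, with an exact-remainder/unconstrained-interpolant/absorption variant that is somewhat more self-contained, at the price of the bookkeeping for $N(e)$ that the paper delegates to Lemma \ref{fourth}.
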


The proof of Theorem \ref{order_theorem} is presented in Section \ref{proof-of-order-theorem}.

\begin{remark}[Validity of the regularity assumptions in Theorem \ref{order_theorem}]
Note that we always have $u \in H^{2}(\D)$ and $w_{u|u|^2} \in H^{2}(\D)$ with $\| w_{u|u|^2} \|_{H^{2}(\D)} \lesssim \| u \|_{L^6(\D)}^3$, which shows that the regularity assumptions of Theorem \ref{order_theorem} are at least fulfilled with $k=1$ (cf. Lemma \ref{H2-lemma} below). For $k>1$, we would need additional regularity of the potential $V$, as well as a smooth domain $\D$ in order to formally guarantee the required regularity. However, smoothness of $\D$ would be in conflict with our assumption that $\D$ has a polygonal boundary. In practical situations, this is not an issue because the computational domain is selected sufficiently large (estimated by the Thomas-Fermi radius) so that the ground state decays quickly to zero towards the boundary. Hence, potential boundary effects on the regularity are negligible. The same holds for the solution $w_{u|u|^2}$ to the dual problem, which has the quickly decaying function $u|u|^2$ as a source term.

It is also worth to mention that in \cite{CCM10}, additional regularity for rectangular domains could be rigorously proved with a prolongation by reflection argument. The argument does however no longer work in the case of rotation since the $\mathcal{L}_{3}u$-term in \eqref{h2_1} prevents us from writing the equation as a Poisson problem with a source term in $H^1_0(\D)$.
\end{remark}

The rest of the paper is mainly dedicated to the proofs of the two above stated theorems. Before we turn our attention to these proofs, we present two numerical experiments to support the convergence rates \eqref{pk_1}, \eqref{pk_2} and \eqref{pk_3}. Furthermore, we exemplarily computed the lower part of the spectrum of $E''(u)$ to verify that assumption \ref{A5} is fulfilled in our test cases.


\section{Numerical experiments}
\label{exp-second-derivative}

In this section we present numerical experiments for $\mathbb{P}^1$-FEM spaces to validate our theoretical convergence rates for ground states (in the $H^{1}$- and $L^{2}$-norm) and the respective eigenvalue and energy. Moreover, to illustrate the validity of assumption \ref{A5} in our experiments, we compute the spectrum of $E''(u)$ for a ground state $u$ and verify that the ground state eigenvalue $\lm$ is only appearing once and at the bottom of the spectrum.

In the following experiments we consider the energy minimization problem \eqref{definition-groundstate} on the square $\D = [-R,R]^2$, where the particular choice of $R$ will be specified in the two test cases. The problem is discretized with $\mathbb{P}^1$--FEM based on a uniform triangulation of $\D$ consisting of $2 N^2_h$ triangles ($N_h \in \mathbb{N}$) with corresponding mesh size $h = 12/N_h$. The values are specified in the test cases. The resulting discrete minimization problem \eqref{weak_min_problem} is solved with an energy-adaptive Riemannian gradient method suggested in \cite{HeP20}, whose generalization to the GPE with rotation is formulated in \cite[Section 6]{AHP21NumMath}. As a measure of accuracy we will stop the solver whenever the difference between two consecutive energies, i.e, $E(u^{n+1}_h) -E(u^{n}_h)$, falls below the tolerance of $\varepsilon = 10^{-10}$. The starting value for the Riemannian gradient method is chosen, according to \cite[Section 6.1]{BWM05}, as the $L^2$-normalized interpolation of the function
$$ u^{0}(x_1 , x_2) := \frac{\Omega}{\sqrt{\pi}} (x_1 + \ci x_2) \, e^{\frac{-(x_1^2 + x_2^2)}{2}} $$
in $V_h$. In the experiments we chose the harmonic potential $V$ as
$$ V(x_1,x_2):= \frac{1}{2} ( \gamma_{x_1}^2 x_{1}^{2} + \gamma_{x_2}^2 x_{2}^{2}), \qquad \mbox{with } \gamma_{x_1} , \gamma_{x_2} \in \R,$$
where the precise trapping frequencies $\gamma_{x_1}$ and $\gamma_{x_2}$ are specified in the two test cases below.


\begin{figure}[b!]
	\flushleft
	\begin{minipage}[t]{0.5\textwidth}
		\centering
		\includegraphics[scale=0.14]{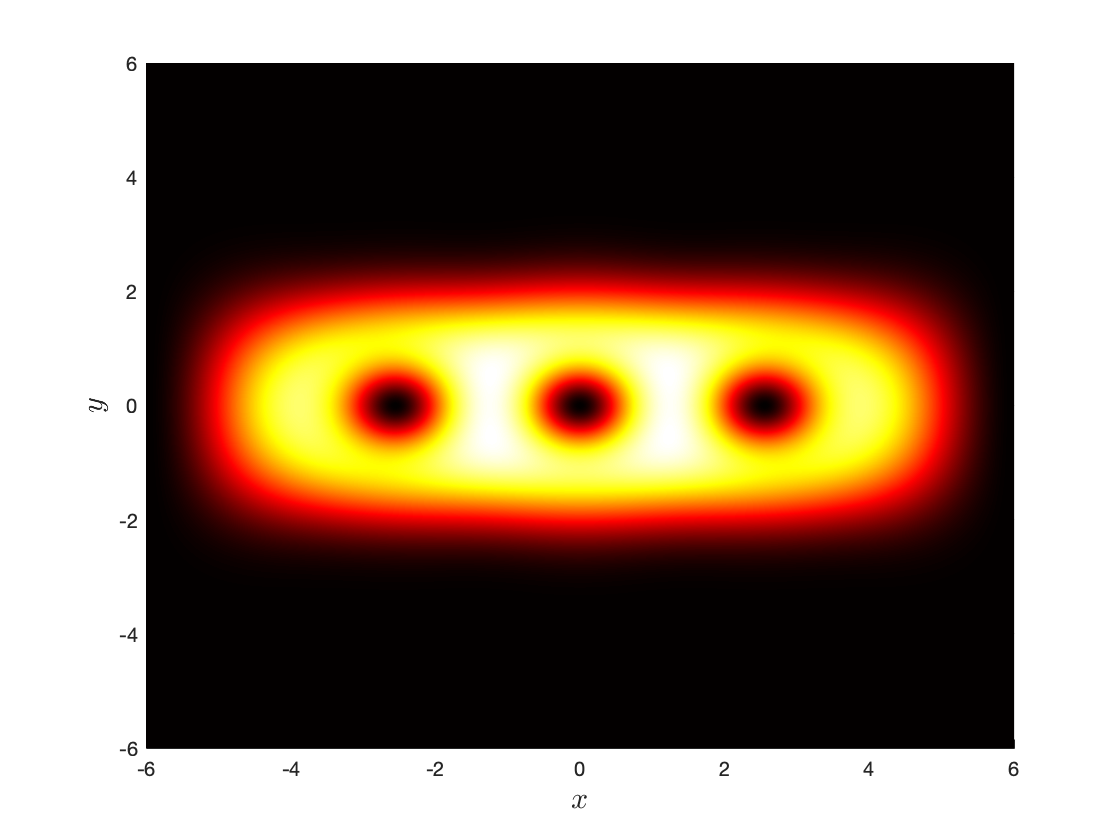}\\
		\vspace{2mm}
		\includegraphics[scale=0.14]{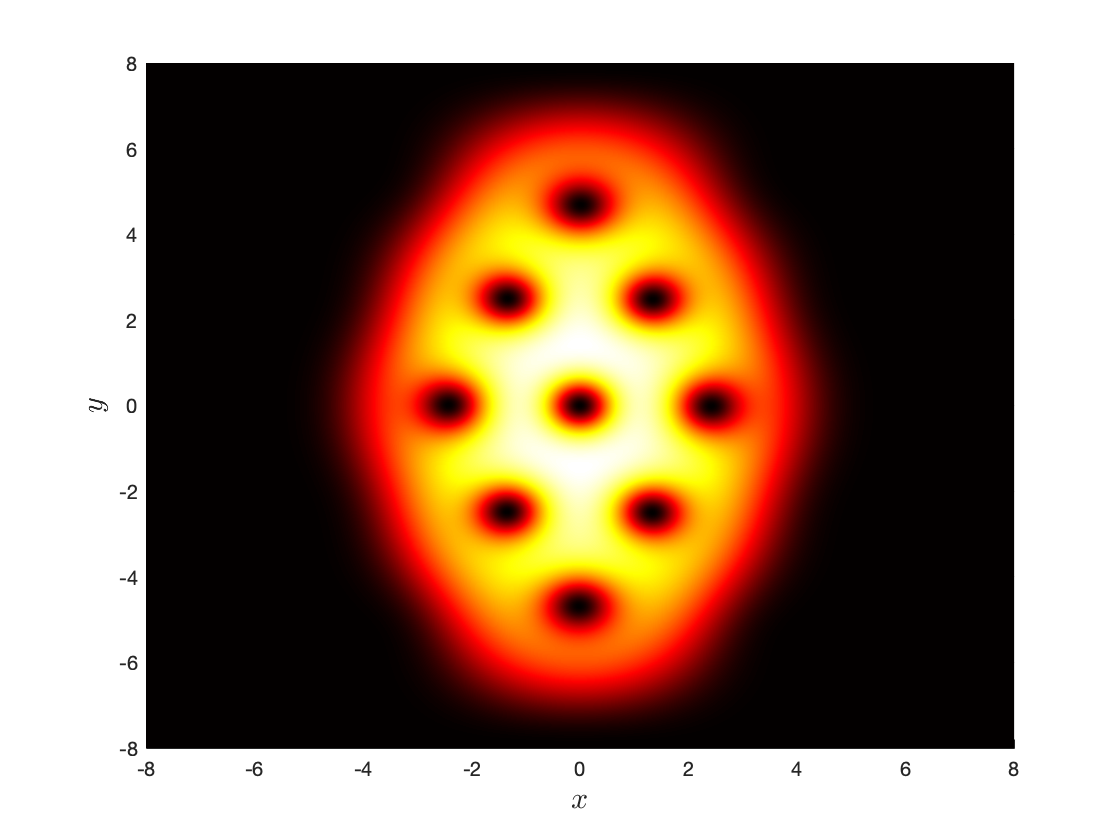}
	\end{minipage}
	\begin{minipage}[t]{0.4\textwidth}
		\centering
		\begin{tabular}{ccc}
			\toprule
			\multicolumn{1}{c}{} & \multicolumn{2}{c}{\textbf{Spectrum of $E''(u)$}}  \\
			\cmidrule(rl){2-3} 
			\textbf{$ i$} & { Model 1} & { Model 2}   \\
			\midrule
			\cellcolor[gray]{.8} 1 & \cellcolor[gray]{.8}4.4488 & \cellcolor[gray]{.8}8.2055  \\
			2 & 4.4609 &  8.2089 \\
			3 & 4.4728 &  8.2111  \\
			4 & 4.4829 &  8.2116  \\
			5 & 4.5013 &  8.2325   \\
			6 & 4.5694 &  8.2379 \\
			7 & 4.6973 &  8.2424  \\
			8 & 4.9089 &  8.2468 \\
			9 & 4.9436 &  8.2479  \\
			10 & 4.9491 & 8.2853  \\
			11 & 4.9500 &  8.3208\\
			12 & 5.0116 &  8.3436  \\
			13 & 5.0415 & 8.3588 \\
			14 & 5.0570 & 8.4840   \\
			15 & 5.1398 & 8.5692   \\
			\bottomrule
		\end{tabular}
	\end{minipage}
	\caption{{\it Surface plot of reference ground state density function $|u_{gs}|^{2}$ in 2D for model 1 (top-left) and model 2 (bottom-left). The table shows the first 15 eigenvalues of second Fr\`echet derivative $E''(u)$ for both the models.} 
	} \label{gs-spectrum}
\end{figure}
\begin{figure}[t!]
	\centering
	\subfloat[$H^{1}$- and $L^{2}$-norm ]{\includegraphics[width=0.5\textwidth]{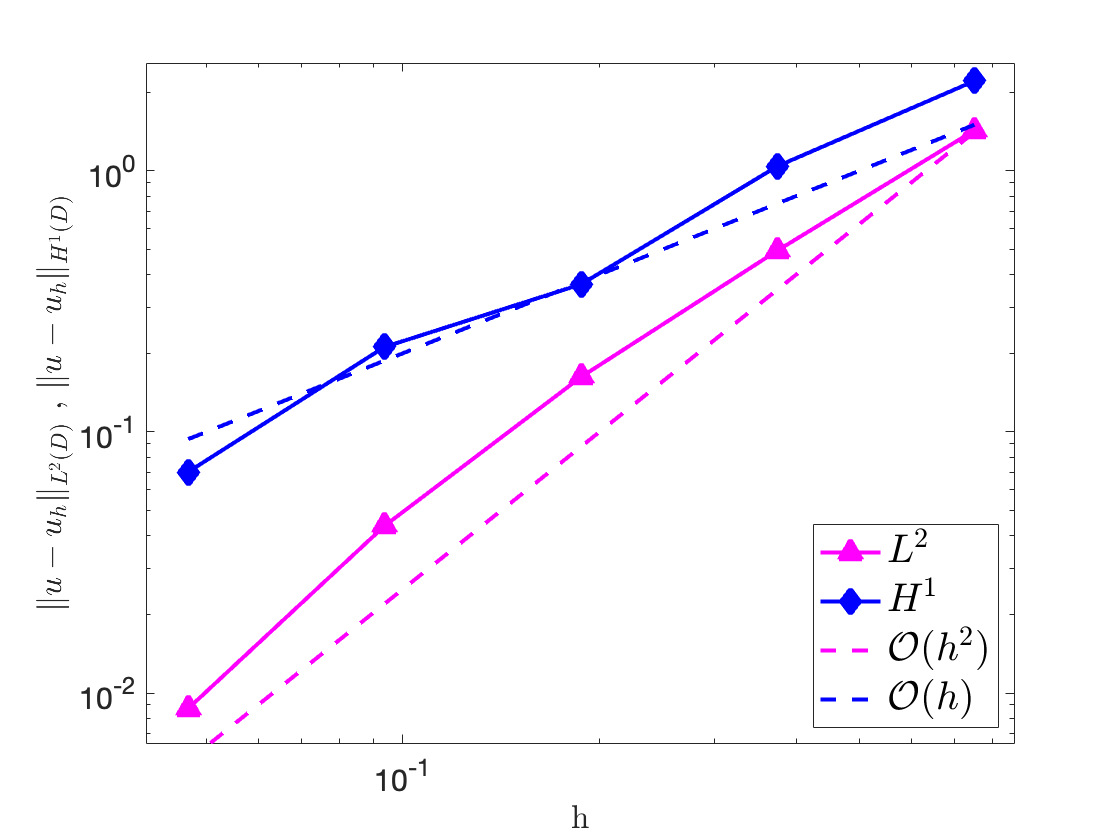}}
	\hfill
	\subfloat[Eigenvalue and energy]{\includegraphics[width=0.5\textwidth]{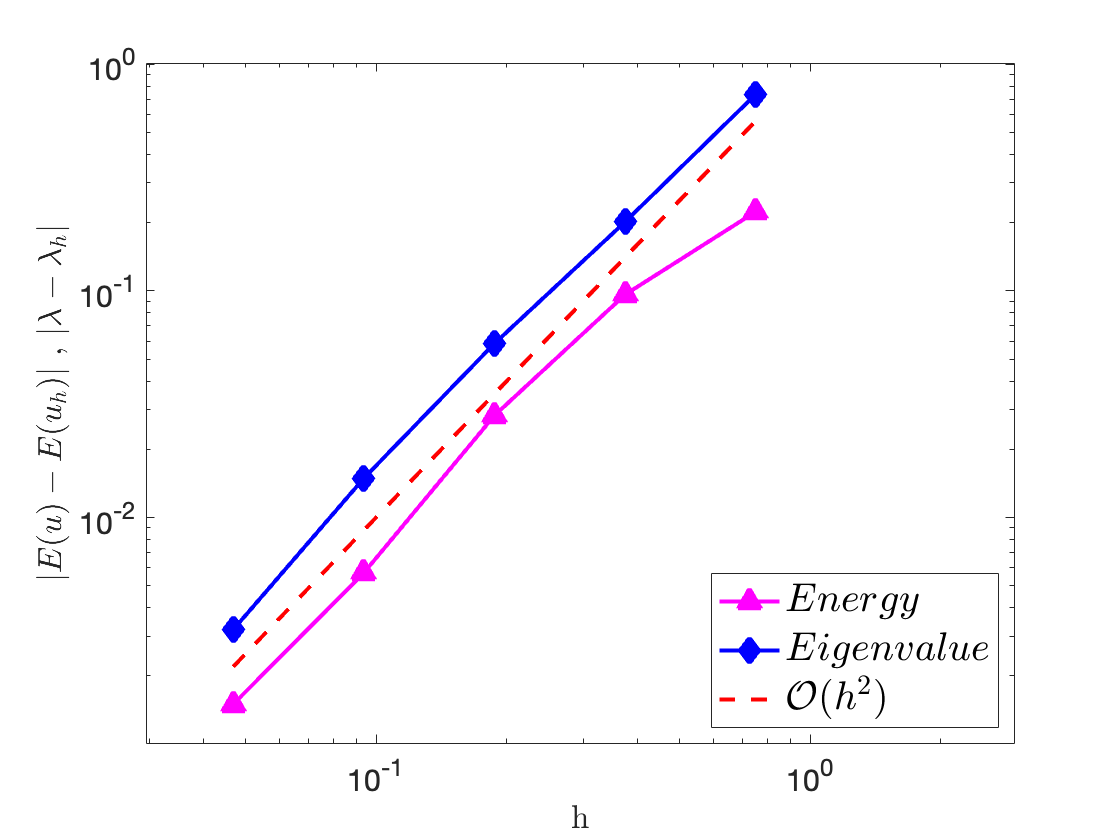}}
	\caption{{\it Errors $\|u-u_{h}\|_{\1}$, $\|u-u_{h}\|_{\0}$,  $| \lm -\lm_h|$ and $|E(u)-E(u_h)|$ for model problem 1.  }}\label{1-rate}
\end{figure}

\subsection{Model problem 1}
{\label{exp-1}}
In the first model problem, we consider $\D=[-6,6]^2$, the angular velocity $\Omega =1.2$ and the repulsion parameter $\beta = 100$. The trapping frequencies are selected as $\gamma_{x_1}= 0.9$ and $\gamma_{x_2} = 1.2$. The discrete ground states are computed on a mesh of decreasing mesh size with values $h= 12 \cdot 2^{-4} \, , 12 \cdot 2^{-5} \, , \, 12 \cdot 2^{-6} \, , \, 12 \cdot 2^{-7} \, , \,12 \cdot 2^{-8} $. The reference ground state $u$ is computed on a fine mesh with size $h = 12 \cdot 2^{-9}$. The problem configuration is such that the ground state $u$
develops quantized vortices. The corresponding density $|u|^2$ of the rotating BEC is plotted (top-left) in Figure \ref{gs-spectrum}. Here we identified the ground state energy with $E(u) \approx 1.6440$ and the corresponding ground state eigenvalue with $\lm \approx 4.4488$. The resulting convergence rates for the ground state (in the $H^{1}$- and in the  $L^{2}$-norm), for the eigenvalue and the energy are plotted in Figure \ref{1-rate} and are in accordance with the predicted rates from Theorem \ref{order_theorem}.

The numerically computed spectrum of the second Fr\`echet derivative $E^{\prime\prime}(u)$, cf. \eqref{sd}, is presented in Figure \ref{gs-spectrum}. As expected, the ground state eigenvalue is a simple eigenvalue of $E^{\prime\prime}(u)$ that only appears at the very bottom of the spectrum. The corresponding eigenfunction is $\ci u$. Note that this shows that assumption \ref{A5} is fulfilled, because it implies (with the Courant--Fischer min-max principle)
\begin{align*}
0<\inf_{v \in \4 } \frac{ \langle(E''(u)-\lambda \mathcal{I})v,v\rangle }{ (v,v)_{\0}} 
\le \inf_{v \in \5 } \frac{ \langle(E''(u)-\lambda \mathcal{I})v,v\rangle }{ (v,v)_{\0}}.
\end{align*}
Hence, the smallest eigenvalue of $E''(u)-\lambda \mathcal{I}$ on $\5$ is positive. Together with the G{\aa}rding inequality, this implies that $E''(u)-\lambda \mathcal{I}$ has a bounded inverse on $\5$ and \ref{A5} is fulfilled.

\begin{remark}[Independence of the spectrum from the phase]
Note that the spectrum of $E''(u)$ does not depend on the phase of $u$, i.e., for all $\theta \in [0,2\pi)$ the eigenvalues of $E''(u)$ and $E''(e^{\ci \theta} u)$ are identical. This observation can be proved with the min-max principle for eigenvalues.
\end{remark}

\subsection{Model problem 2}
{\label{exp-2}}
In the second model problem we set $\D=[-8,8]^2$, the trapping frequencies of the potential to $\gamma_{x_1} = 1.1$ and $\gamma_{x_2}=0.9$; the repulsion parameter to $\beta =400$ and the angular velocity to $\Omega = 1.1$. Here the discrete ground states are computed on a mesh of decreasing mesh size with values $h= 16 \cdot 2^{-4} \, , 16 \cdot 2^{-5} \, , \, 16 \cdot 2^{-6} \, , \, 16 \cdot 2^{-7} \, , \,16 \cdot 2^{-8} $. The reference ground state $u$ is computed on a fine mesh with size $h = 16 \cdot 2^{-9}$. The reference ground state has quantized vortices and we identified the ground state energy with $E(u) \approx 2.9107$ and the corresponding ground state eigenvalue with $\lm \approx 8.2055$. The corresponding density $|u|^2$ (bottom-left ) is plotted in Figure \ref{gs-spectrum}. Again, the convergence rates for the $H^{1}$-error, the $L^{2}$-error, the energy error and the eigenvalue error, as plotted in Figure \ref{2-rate}, are in accordance with Theorem \ref{order_theorem}.

The second Fr\`echet derivative $E^{\prime\prime}(u)$ is presented in Figure \ref{gs-spectrum}. Again, we find that the ground state eigenvalue is the smallest eigenvalue and appears only once in the spectrum, showing (with the same arguments as in Section \ref{exp-1}) that assumption \ref{A5} is fulfilled.

\begin{figure}[t!]
	\centering
	\subfloat[$H^{1}$- and $L^{2}$-norm ]{\includegraphics[width=0.5\textwidth]{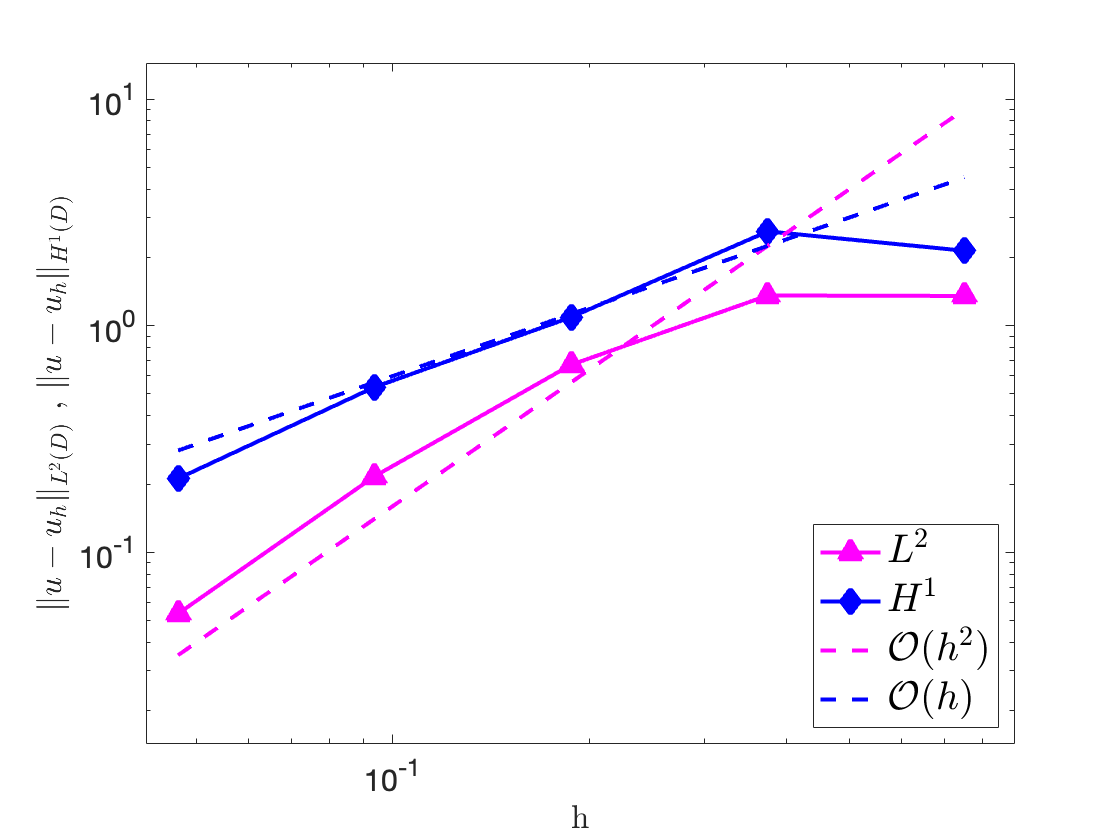}}
	\hfill
	\subfloat[Eigenvalue and energy]{\includegraphics[width=0.5\textwidth]{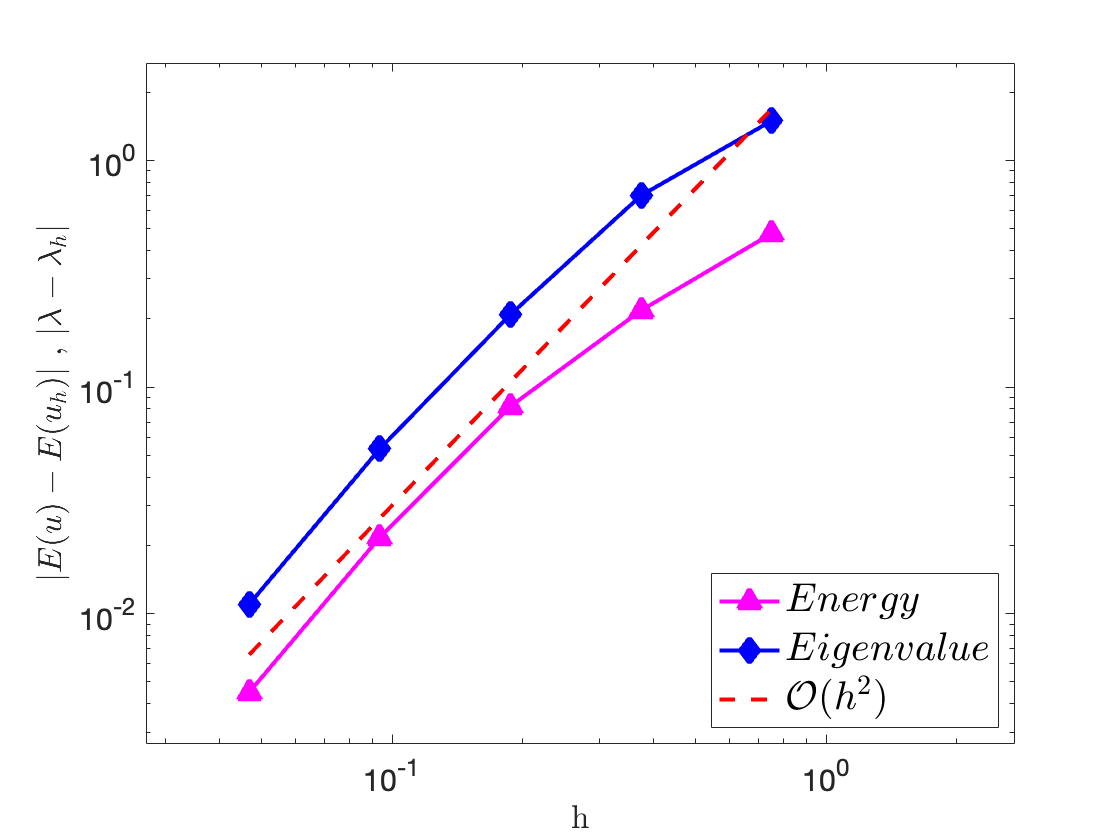}}
	\caption{{\it  Errors $\|u-u_{h}\|_{\1}$, $\|u-u_{h}\|_{\0}$,  $| \lm -\lm_h|$ and $|E(u)-E(u_h)|$ for model problem 2. }}\label{2-rate}
\end{figure}

\section{Proofs of Theorems \ref{approx_theorem} and \ref{order_theorem}} 
\label{proof-of-approx-theorem}

We now turn towards the proofs of our main results which are split into several subsections.

 \subsection{Asymptotic convergence results}  
 \label{basic_analysis}

The aim of this first proof section is to establish the existence of an actual unique quasi-isolated ground state in a sufficiently small $\delta$-neighborhood of each discrete ground state (for $h$ small enough). 

First, note that with assumptions \ref{A1}-\ref{A4} and the norm equivalence \eqref{norm-equivalence-nablaR} together with the fact that $u_{h}$ minimizes the energy of $E$ on the nested spaces $V_h$,  we can conclude that a family of discrete ground states of \eqref{weak_min_problem} must be uniformly bounded in the $H^1$-norm, i.e., there exists a constant $C>0$ such that, for all sufficiently small $h$, it holds
$$ \sup_{ u_{h} \in \mathcal{U}_h} \|u_{h}\|_{\1} \leq \,C.$$
Now we will use the above uniform boundedness along with the lower semi-continuity of the energy functional $E$ in the weak topology of $\mathbb{S}$ to show the $H^1$-convergence of discrete ground states (up to subsequences) to an actual ground state $u \in \mathcal{U}$. The subsequent proposition also proves the first part of Theorem \ref{approx_theorem}.
 \begin{proposition}
\label{convergence_theorem} Assume \ref{A1}-\ref{A4} holds and let $\{(\lm_h , u_h)\} $ be a family of discrete ground state pairs of \eqref{discrete_eigen_value_problem} in $  \mathbb{R}^+ \times \mathcal{U}_h$. Then there exists a subsequence $\{(\lm_{h_j} , u_{h_j})\}_{j \in \mathbb{N}} \subset \{(\lm_h , u_h)\} $ and an exact ground state $(\lm_0 , u_0) \in  \mathbb{R}^+ \times \mathcal{U}$ of  $ \eqref{eigen_value_problem_2}$  such that 
$$ \lim_{j \rightarrow 0} E(u_{h_j}) = \inf_{v \in \mathbb{S}}E(v), $$
$$ \lim_{j \rightarrow 0} \|u_{h_j} - u_0 \|_{\1} = 0 \qquad \mbox{and} \qquad \lim_{j \rightarrow 0} |\lm_{h_j} - \lm_0 | = 0.\hspace{9mm} $$
Furthermore, for each sufficiently small $\delta>0$, there is a $h_{\delta}>0$ such that for each $h\le h_{\delta}$ and each $u_h \in  \mathcal{U}_{h}$, there exists a ground state $u \in \mathcal{U}$ with $\int_{\D}u_h \overline{u} \dx \in \R_{\geq 0}$ and such that 
\begin{align}
\label{convergence_of_each_discrete_gs}
\| u_h - u \|_{\1} \leq \delta \, .
\end{align}
The above $u$ is unique if \ref{A5} is fulfilled. 
\end{proposition}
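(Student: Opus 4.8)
The plan is to split the proof into the compactness/convergence part and the uniqueness part, treating them separately. For the first part, I would start from the uniform $H^1$-bound on $\mathcal{U}_h$ already established in the text: since $\|u_h\|_{\sR}$ is uniformly bounded and $\|\cdot\|_{\sR}$ is equivalent to $\|\cdot\|_{H^1(\D)}$ by \eqref{norm-equivalence-nablaR}, the family $\{u_h\}$ is bounded in $H^1_0(\D)$. By reflexivity I can extract a subsequence $u_{h_j} \rightharpoonup u_0$ weakly in $H^1_0(\D)$, and by the compact Rellich embedding $H^1_0(\D) \hookrightarrow L^2(\D)$ (using \ref{A1}) I may assume $u_{h_j} \to u_0$ strongly in $L^2(\D)$, so that the constraint $\|u_0\|_{L^2(\D)}=1$ is preserved and $u_0 \in \mathbb{S}$. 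The next step is to identify $u_0$ as an exact ground state: by density of $\bigcup_h V_h$ in $H^1_0(\D)$ together with the nestedness of the spaces, for any fixed $v \in \mathbb{S}$ one can construct admissible discrete competitors (e.g. $L^2$-normalized interpolants $I_h v$) whose energies converge to $E(v)$, giving $\limsup_j E(u_{h_j}) \le E(v)$ for all $v \in \mathbb{S}$; combined with weak lower semicontinuity of $E$, which yields $E(u_0) \le \liminf_j E(u_{h_j})$, this forces $E(u_0)=\inf_{v\in\mathbb{S}}E(v)$ and hence $u_0 \in \mathcal{U}$ with $\lim_j E(u_{h_j})=\inf_{v\in\mathbb{S}}E(v)$.

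Upgrading weak to strong $H^1$-convergence is where I would be most careful, and I expect it to be the main technical obstacle. The idea is to exploit the Hilbert-space structure of the $\|\cdot\|_{\sR}$-norm: from the energy reformulation \eqref{energy_function}, $\tfrac12\|u_{h_j}\|_{\sR}^2 = E(u_{h_j}) - \tfrac{\beta}{4}\int_{\D}|u_{h_j}|^4$. The strong $L^2$-convergence together with the uniform $H^1$-bound and the Gagliardo--Nirenberg/Sobolev embedding $H^1_0(\D)\hookrightarrow L^4(\D)$ lets me pass to the limit in the quartic term, $\int_{\D}|u_{h_j}|^4 \to \int_{\D}|u_0|^4$. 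Since $E(u_{h_j}) \to E(u_0)$, this gives $\|u_{h_j}\|_{\sR} \to \|u_0\|_{\sR}$; combined with weak convergence $u_{h_j}\rightharpoonup u_0$ in the Hilbert space $(H^1_0(\D),(\cdot,\cdot)_{\sR})$, convergence of the norms yields strong convergence $u_{h_j}\to u_0$ in $\|\cdot\|_{\sR}$, hence in $H^1(\D)$ by norm equivalence. The eigenvalue convergence $\lambda_{h_j}\to\lambda_0$ then follows by testing the discrete equation \eqref{discrete_eigen_value_problem} with $v=u_{h_j}$, so that $\lambda_{h_j}=\langle\Acal_{|u_{h_j}|}u_{h_j},u_{h_j}\rangle=\|u_{h_j}\|_{\sR}^2+\beta\int_{\D}|u_{h_j}|^4$, and passing to the limit using the established strong convergences; the limit equals $\lambda_0$ because $(\lambda_0,u_0)$ satisfies \eqref{eigen_value_problem_2}.

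For the second half of the statement I would argue by contradiction to promote the subsequential convergence to the stated \emph{neighborhood} property for \emph{every} discrete ground state. Suppose the claim fails for some fixed small $\delta$: then there is a sequence $h_j \to 0$ and $u_{h_j}\in\mathcal{U}_{h_j}$ such that no ground state $u\in\mathcal{U}$ in the same phase lies within $H^1$-distance $\delta$. Applying the first part to this sequence extracts a further subsequence converging in $H^1$ to some $u_0\in\mathcal{U}$; after multiplying $u_0$ by the phase factor $e^{\ci\theta}$ that aligns phases (i.e. choosing $\theta$ so that $\int_{\D}u_{h_j}\overline{e^{\ci\theta}u_0}\,\dx \in \R_{\ge0}$, which leaves $u_0\in\mathcal{U}$ since $E$ is phase-invariant and only improves the distance), we obtain a same-phase ground state within distance $\delta$ for large $j$, a contradiction. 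Finally, to obtain uniqueness in the $\delta$-neighborhood under \ref{A5}, I would use quasi-isolation: if two ground states $u, \hat u \in \mathcal{U}$ both lie in $\mathbb{B}_\delta(u_h)$ and are in the same phase as $u_h$, then for $\delta$ small they lie within $2\delta$ of each other, so by \eqref{preliminary-quasi-isolation} (the neighborhood form of quasi-isolation guaranteed by \ref{A5}) we get $\hat u = e^{\ci\theta}u$; but the same-phase normalization $\int_{\D}u_h\overline{u}, \int_{\D}u_h\overline{\hat u}\in\R_{\ge0}$ together with $\|u_h-u\|_{L^2}$ small forces $\theta=0$, hence $\hat u=u$. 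The delicate points here are making the phase-alignment argument rigorous (the minimizing $\theta$ is well defined because $\int_{\D}u_{h_j}\overline{u_0}\ne 0$ for $j$ large, a consequence of $L^2$-convergence and $\|u_0\|_{L^2}=1$) and ensuring $\delta$ is chosen uniformly small enough that the local quasi-isolation radius is not exceeded.
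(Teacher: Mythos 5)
Your proposal is correct and follows essentially the same route as the paper's proof: uniform $H^1$-boundedness plus Rellich compactness and weak lower semicontinuity, density of the nested spaces $V_h$ to identify the weak limit as a ground state, convergence of the $\|\cdot\|_{\sR}$-norms combined with weak convergence (the Radon--Riesz argument) to upgrade to strong $H^1$-convergence, a contradiction argument for the neighborhood property, and a small phase shift $e^{\ci \theta_h}$ to enforce the same-phase condition. Two cosmetic caveats: nodal interpolation of a general $v\in\mathbb{S}$ is not defined for $v$ merely in $H^1_0(\D)$ (the paper avoids this by taking a minimizing sequence directly from the dense union of the $V_h$), and phase alignment minimizes the $L^2$-distance rather than the $H^1$-distance --- the paper controls the latter via the triangle inequality and the small-angle bound $\varepsilon(\delta)=\mathcal{O}(\sqrt{\delta})$ --- but neither affects the validity of your argument, since along your contradiction subsequence both the unaligned distance and the phase angle tend to zero.
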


\begin{proof} The first part of the proof is based on standard compactness arguments, cf. \cite{CGZ10,HeP20}. Let  $ \{u_{h}\}_{h>0} \subset \mathcal{U}_{h}$ be a sequence of discrete ground states of \eqref{weak_min_problem}. Then the Rellich embedding theorem together with the uniform boundedness of $ \{u_{h}\}$ in $H^{1}(\D)$  ensures existence of a weakly convergent subsequence $\{u_{h_j}\}\subset \{u_{h}\}$ and a $u_0 \in H_{0}^{1}(\D)$ such that $u_{h_j} \rightharpoonup  u_0$ in $H_{0}^{1}(\D)$ and  $u_{h_j} \rightarrow u_0$ strongly in $L^{p}(\D)$ for $1 \leq p \leq 4$. 
It remains to prove that $u_0 $ is a ground state of \eqref{definition-groundstate} and that the weak-convergence in $\3$ is actually a strong convergence. To see this, first note that since the family of finite element  spaces $\{V_{h_j}\}_{j \in \mathbb{N}}$ (based on regular mesh refinement) is dense in $H^1_0(\D)$, there exists a minimising sequence $\{ v_{h_j}  \}$ for the energy functional $E$, i.e,  $E(v_{h_j}) \rightarrow \inf\limits_{v \in \mathbb{S}}E(v)$ where $v_{h_j} \in V_{h_j} \cap \mathbb{S}$.  Hence, the lower semi-continuity of $E$ yields
\begin{align}
\label{energylimit}
\inf_{v \in \mathbb{S}}E(v) = \lim_{j \rightarrow 0}E( v_{h_j} )  \geq \liminf_{j \rightarrow 0}E( u_{h_j} ) \geq E(u_0).
\end{align}
Since $\min\limits_{v \in \mathbb{S}} E(v) > E(u_0)$ is not possible, we conclude $\lim\limits_{j \rightarrow 0}E( u_{h_j} )=E(u_0) =\min\limits_{v \in \mathbb{S}}E(v)$. The normalization of the limit, i.e., $\|u_{0}\|_{\0} =1$, follows from the strong $L^2$-convergence of the subsequence $\{u_{h_j}\}\subset \mathbb{S}$ to $u_{0}$. This implies $u_{0}$ is a ground state of \eqref{definition-groundstate}. Consequently, the weak convergence of $\{ u_{h_j} \}$ in $H_{0}^{1}(\D)$ and the strong convergence in $L^{4}(\D)$, together with $\lim\limits_{j \rightarrow 0}E( u_{h_j} )=E(u_0)$, yield
$\lim\limits_{j \rightarrow 0} \| u_{h_j}\|_{\sR} = \| u_0\|_{\sR} $. The norm equivalence of $\| \cdot\|_{\sR}$ and $\|\cdot \|_{H^{1}(\D)}$ together with the Radon-Riesz property imply
$$\lim_{j \rightarrow 0}\|u_{h_j} - u_0 \|_{\1} =0. $$
Also the convergence of the eigenvalue subsequence $\{ \lm_{h_j} \}$ to $\lm_{0}$ follows from the strong convergence of $\{u_{h_j}\}$ to $u_0$ in $H^{1}(\D)$ and $L^{p}(\D)$ ($p=2,4$).

Next, we prove \eqref{convergence_of_each_discrete_gs}. The statement is easily seen with an argument by contradiction. Assume that \eqref{convergence_of_each_discrete_gs} is wrong, then there is a $\delta>0$ and a subsequence $\{ u_{\tilde{h}_j} \} \subset \mathcal{U}_{\tilde{h}_j}$ with $\tilde{h}_j \rightarrow 0$, but $\| u - u_{\tilde{h}_j} \|_{\1} > \delta$ for all ground states $u\in \mathcal{U}$. However, now we could again apply our previous arguments to the sequence $\{ u_{\tilde{h}_j} \}$ to conclude $H^1$-convergence of a subsequence to a ground state. This is a contradiction to  $\| u - u_{\tilde{h}_j} \|_{\1} > \delta$ for all $u\in \mathcal{U}$.

It remains to check that $u \in \mathcal{U}$ can be, without loss of generality, selected in the same phase as $u_h$. For that, let $u \in \mathcal{U}$ be such that \eqref{convergence_of_each_discrete_gs} holds for a given sufficiently small $\delta$. Then
 \begin{align}
 	\label{delta-relation}
 	1\hspace{-1pt}-\hspace{-1pt}\frac{\delta^2}{2} \, \leq \, 
	\re \int_{\D} u_h \overline{u} \dx \leq \, \vert \int_{\D} u_h \overline{u} \dx \, \vert \, \leq 1\hspace{-1pt}+\hspace{-1pt} \delta \, ,
	\enspace
	\mbox{and hence } | \, \im \int_{\D}u_h \overline{u} \dx \, | \, \lesssim \,  \sqrt{\delta} \,  .
 	\end{align}
Now consider the complex number
 $
 \boldsymbol{\alpha} :=  \int_{\D} u_h \overline{u} \hspace{2pt} \dx \, ,
 $ 
 and the respective polar representation as $\boldsymbol{\alpha} = r_h e^{\ci \theta_h} ,$ where $r_h >0$ and $\theta_h \in [0,2 \pi)$. The inequalities in \eqref{delta-relation} imply that $\theta_h$ is in a small neighbourhood of $0$, in particular  $\vert \sin \theta_h \vert \, \lesssim  \sqrt{\delta}$. With this, we consider the phase shift $e^{ \ci \theta_h} u$ of $u$ by the small angle $\theta_h$. The shifted function is still a ground state and obviously in the same phase as $u_h$, i.e. $\int_{\D} u_h \, \overline{e^{ \ci \theta_h} u} \hspace{2pt} \dx \in \R_{\ge 0}$. Furthermore, it holds
\begin{eqnarray*}
 \lefteqn{\| u_h -  e^{\ci \theta_h} u \|_{\1} \le
 \|u_h -u \|_{\1} + |1-e^{\ci \theta_h}| \, \|u\|_{\1} } \\
 & \lesssim  & \|u_h -u \|_{\1} +  \sqrt{2(1-\cos(\theta_h))} \, = \,  \|u_h -u \|_{\1} + 2 \vert \sin \tfrac{\theta_h}{2} \vert  \\
 & \lesssim & \|u_h -u \|_{\1} + 2 \vert \sin \theta_h \vert  \, \, \, \leq \,\,\, \varepsilon(\delta) \, ,
 \end{eqnarray*}
where we used that $\theta_h$ is a small angle and where $\varepsilon(\delta) =  \mathcal{O}(\sqrt{\delta})$. Note that we also exploited the fact that $\|u\|_{\1}$ (for any $u\in \mathcal{U}$) is uniformly bounded by a constant that only depends on the ground state energy and the data parameters.
 Therefore, for  a given $u_h$ we have an actual (quasi-isolated) ground state  $ e^{\ci \theta_h} u \in \mathcal{U} $ in a sufficiently small $\epsilon(\delta)$-neighbourhood of $u_h$ that is also in the same phase as $u_h$, i.e., we have 
 $$\| u_h - e^{\ci \theta_h} u \|_{\1} \leq \varepsilon(\delta) \quad \mbox{ and } \int_{\D} u_h \overline{( e^{\ci \theta_h} \, u)} \hspace{2pt} \dx \in \mathbb{R}_{\ge 0}.$$ 
 This completes the proof as $\varepsilon(\delta) \rightarrow 0$ for $\delta\rightarrow 0$.
\end{proof}
\begin{remark}
	\label{V-perp-u}
 Note that if an actual ground state $u \in \mathcal{U}$ and a discrete ground state $u_h \in \mathcal{U}_h$ are in the same phase, then 
 $$\im \left( \int_{\D} u_h \, \overline{u} \hspace{2pt} \dx \right) = 0 \quad \mbox{ and consequently } \quad \re \left( e^{-\ci \tfrac{\pi}{2}} \int_{\D} u_h \, \overline{u} \hspace{2pt} \dx \right) = 0\, ,$$
 that is, $ u_h \in \tangentspace{\ci u}\, $. In this sense, $ \tangentspace{\ci u} \cap V_h$ becomes a natural space for seeking reasonable approximations of $u$.
\end{remark}

With this we are prepared to turn towards the error estimates.

\subsection{Abstract error bounds for eigenvalue and energy}
\label{subsection:abstracy-bounds}

In this subsection we derive a priori error estimates for the ground state eigenvalue and the ground state energy by bounding it by the $L^2$- and $H^1$-errors for the ground state.

We first present a lemma which gives a useful relation associated with a ground state eigenpair $(\lm,u)$ of \eqref{definition-groundstate} and a discrete ground state pair $(\lm_h , u_h)$ of \eqref{discrete_eigen_value_problem}. Moreover, we obtain an upper bound of the approximation error. The lemma gains interest if we realize that the estimate \eqref{l-lh1-new} helps us to derive the optimal rates for higher order finite element spaces.
\begin{lemma}
\label{lambdah-lambda} Assume \ref{A1}-\ref{A4} holds. 
Let $(\lambda_h , u_h) \in  \mathbb{R}_{>0} \times \mathcal{U}_h $ denote a discrete ground state pair of \eqref{weak_min_problem}  and $(\lambda , u) \in  \mathbb{R}_{>0} \times \mathcal{U}$ 
an arbitrary exact ground state pair. Then it holds
\begin{equation}
\label{l-lh}
\lambda_h - \lambda =\langle (\mathcal{A}_{|u|} -\lm \mathcal{I})(u_{h}-u),u_{h}-u\rangle  + \Ltwo{\beta(|u_h|^2-|u|^2)u_h}{u_h}
\end{equation}
and 
\begin{equation}
\label{l-lh1-new}
|\lambda_h - \lambda|  \lesssim  \|u_h-u\|_{H^1(\D)}^{2} + ( u_h - u , \, u |u|^2 )_{\0}.
\end{equation}
\end{lemma}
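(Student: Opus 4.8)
The plan is to first prove the algebraic identity \eqref{l-lh} and then to estimate its two summands separately, the essential point being that the nonlinear contribution must be split into a genuinely quadratic remainder plus the linear functional $(u_h-u,u|u|^2)_{\0}$, which is kept explicit for a later duality argument. For \eqref{l-lh} I would begin from the Rayleigh representations of the two eigenvalues: testing \eqref{eigen_value_problem_2} with $v=u$ and \eqref{discrete_eigen_value_problem} with $v=u_h$ and using $\|u\|_{\0}=\|u_h\|_{\0}=1$ yields $\lambda=\langle\Acal_{|u|}u,u\rangle$ and $\lambda_h=\langle\Acal_{|u_h|}u_h,u_h\rangle$. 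Subtracting and replacing $\Acal_{|u_h|}$ by $\Acal_{|u|}$ in the discrete term produces, since $\Acal_{|u_h|}-\Acal_{|u|}$ acts as multiplication by $\beta(|u_h|^2-|u|^2)$, exactly the second summand $\Ltwo{\beta(|u_h|^2-|u|^2)u_h}{u_h}$ of \eqref{l-lh}. It then remains to identify $\langle\Acal_{|u|}u_h,u_h\rangle-\langle\Acal_{|u|}u,u\rangle$ with $\langle(\Acal_{|u|}-\lambda\mathcal{I})e,e\rangle$ for $e:=u_h-u$. Expanding the latter bilinearly and invoking self-adjointness of $\Acal_{|u|}$ on $\0$ together with $\langle\Acal_{|u|}u,v\rangle=\lambda(u,v)_{\0}$ for all $v\in\3$, the mixed terms reduce to multiples of $(u,u_h)_{\0}$; these cancel against $-\lambda\|e\|_{\0}^2=-\lambda(2-2(u,u_h)_{\0})$, which holds by normalization. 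This establishes \eqref{l-lh}.

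To obtain \eqref{l-lh1-new} I would then bound the two terms of \eqref{l-lh}. The first is controlled by the boundedness of $\Acal_{|u|}-\lambda\mathcal{I}$ from $\3$ into $\2$, giving $|\langle(\Acal_{|u|}-\lambda\mathcal{I})e,e\rangle|\lesssim\|e\|_{\1}^2$; here the only nontrivial contribution, the $\beta|u|^2$-part, is handled with $u\in L^\infty(\D)$ from Lemma \ref{gs-h2-regularity} (or via $\1\hookrightarrow L^6(\D)$). For the nonlinear term I would use the pointwise identity $|u_h|^2-|u|^2=2\re(\overline{u}\,e)+|e|^2$. Substituting $|u_h|^2=|u|^2+\bigl(2\re(\overline u e)+|e|^2\bigr)$ into $\Ltwo{\beta(|u_h|^2-|u|^2)u_h}{u_h}$ and expanding, the only piece that is linear in $e$ is $2\beta\int_{\D}\re(\overline u\,e)\,|u|^2\dx=2\beta\,(u_h-u,u|u|^2)_{\0}$, which I retain verbatim. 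Every remaining term carries at least two factors of $e$ and is estimated by Hölder's inequality and the embeddings $\1\hookrightarrow L^4(\D),L^6(\D)$ for $d\le 3$, together with the uniform $H^1$-bounds on $u,u_h$ and the smallness $\|e\|_{\1}\le\delta$ from Proposition \ref{convergence_theorem}, so that the arising cubic and quartic powers of $\|e\|_{\1}$ are absorbed into $\|e\|_{\1}^2$. Collecting everything yields \eqref{l-lh1-new}, with the functional $(u_h-u,u|u|^2)_{\0}$ kept explicit on the right-hand side.

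The step I expect to be the main obstacle is precisely this separation in the nonlinear term: one must resist bounding the linear functional $(u_h-u,u|u|^2)_{\0}$ by $\|e\|_{\1}$ — which would only reproduce the suboptimal $H^1$-rate — and instead keep it intact while verifying that all genuinely higher-order remainders are controlled quadratically in $\|e\|_{\1}$. Retaining this functional is exactly what later permits an Aubin--Nitsche-type duality argument, through the dual problem defining $w_{u|u|^2}$, to upgrade the eigenvalue error to the sharp order $\mathcal{O}(h^{2k})$.
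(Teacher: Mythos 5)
Your proposal is correct and follows essentially the same route as the paper: the identity \eqref{l-lh} is obtained from the same algebra (symmetry of $\Acal_{|u|}$, the relation $\langle\Acal_{|u|}u,\cdot\rangle=\lambda(u,\cdot)_{\0}$ and $\|u\|_{\0}=\|u_h\|_{\0}=1$), merely organized as a difference of Rayleigh quotients rather than via the paper's intermediate identity \eqref{l-lh2} tested with $v=u_h$. Your expansion of the cubic term via $|u_h|^2-|u|^2=2\re(\overline{u}\,e)+|e|^2$ is algebraically equivalent to the paper's splitting in \eqref{l-lh-new-1} and, exactly as there, isolates $2\beta(u_h-u,u|u|^2)_{\0}$ while all remaining contributions are quadratic (or higher, absorbed via the uniform $H^1$-bounds on $u$ and $u_h$) in $\|u_h-u\|_{\1}$.
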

Note that \eqref{l-lh1-new} together with $\| u \|_{L^6(\D)} \lesssim \| u \|_{H^1(\D)}$ (for $d\le3$) directly implies
\begin{equation}
\label{l-lh1}
|\lambda_h - \lambda|  \lesssim  \|u_h-u\|_{H^1(\D)}^{2} + \|u_h-u\|_{L^2(\D)},
\end{equation}
which is however not sharp for higher order finite element spaces, whereas \eqref{l-lh1-new} allows to derive optimal rates.

\begin{proof}[Proof of Lemma \ref{lambdah-lambda}]  Recall that the $L^{2}$-inner product only involves the real part. Hence,  it is symmetric and we can write $\langle \Acal_{|u|} (v-u), v-u \rangle  = \langle \Acal_{|u|} v,v\rangle +\langle \Acal_{|u|} u,u \rangle -2\langle \Acal_{|u|} u,v \rangle$. Using the definition of $\langle \Acal_{|u|} v,v\rangle$ and by rearranging the terms we get
$$ (\nablaR  v, \nablaR  v)_{\0}+  (\VR  v,v)_{\0}=  \langle \Acal_{|u|}  (v-u), v-u \rangle  -\langle \Acal_{|u|} u,u \rangle + 2\langle \Acal_{|u|} u,v \rangle -(\beta |u|^2 v,v)_{\0}. $$
Adding $(\beta |v|^2 v,v)_{\0}$ to both the sides of the above expression leads to the identity 
\begin{eqnarray}
\nonumber \lefteqn{\langle \Acal_{|v|} v,v \rangle \,\, = \,\,\langle \Acal_{|u|}  (v-u), v-u \rangle  -\langle \Acal_{|u|} u,u \rangle + 2\langle \Acal_{|u|} u,v \rangle + \beta ( |v|^2-|u|^2 ,|v|^2)_{\0} }\\
\label{l-lh2} & =& \langle (\Acal_{|u|}-\lm \mathcal{I})  (v-u), v-u \rangle +\lambda \langle \mathcal{I}v,v \rangle +\beta ( |v|^2-|u|^2 ,|v|^2)_{\0}\,,\hspace{90pt}
\end{eqnarray}
where the last equality follows from 
$$ \langle \Acal_{|u|} u,u \rangle - 2\langle \Acal_{|u|} u,v \rangle =  \lm \langle \mathcal{I}u,u \rangle -2\lm \langle \mathcal{I}u,v \rangle +\lm \langle \mathcal{I}v,v \rangle -\lm \langle \mathcal{I}v,v \rangle$$
 $$\hspace{18mm}= \lambda \langle \mathcal{I}(v-u),v-u \rangle - \lambda \langle \mathcal{I}v,v \rangle.  $$
Thus, by testing in \eqref{l-lh2} with $v=u_h$ and using the fact that $\|u_h\|_{\0}=1$, we see that \eqref{l-lh} follows immediately.
Now using assumptions \ref{A1}-\ref{A4} and the equivalence of the norms $\| \cdot \|_{\sR}$ and $\| \cdot \|_{\1}$, we bound the first term on the right hand side of \eqref{l-lh} as 
\begin{eqnarray}
\nonumber \lefteqn{|\langle (\mathcal{A}_{|u|} -\lm \mathcal{I})(u_{h}-u),u_{h}-u\rangle |}\\
\nonumber &\leq & \|u_h -u \|^{2}_{\sR} + \|\VR \|_{L^{\infty}(\D)} \|u_h -u\|^{2}_{\0} + \|\, \beta |u|^2 \,\|_{L^{\infty}(\D)}\|u_h -u\|_{\0}^{2}\\
\label{l-lh-new-0}& \lesssim & \|u_h -u \|^{2}_{\1}\,.
\end{eqnarray}
Since $u_h$ is uniformly bounded in $H^1(\D)$ (by the fact that it is a discrete energy minimizer), the second term on the right hand side of \eqref{l-lh} can be estimated as
\begin{eqnarray}
\nonumber \lefteqn{  |\Ltwo{(|u_h|^2-|u|^2)u_h}{u_h} |  } \\
\nonumber &=&  |((u_h -u)^2 , (u_h +u)u_h)_{\0} + ( |u_h -u|^2, (u_h+2u)\overline{u})_{\0} +2\, (u_h -u,u|u|^2)_{\0} | \\
\label{l-lh-new-1} &\lesssim& \| u_h - u\|_{H^1(\D)}^2 +  ( u_h - u , u|u|^2 )_{\0}.
\end{eqnarray}
By combining the estimates \eqref{l-lh}, \eqref{l-lh-new-0} and  \eqref{l-lh-new-1}, we obtain the desired estimate \eqref{l-lh1-new}.
\end{proof}

%
It is shown by Proposition \ref{convergence_theorem} that for a sufficiently fine mesh $h$ and each discrete ground state $u_h$ to \eqref{weak_min_problem}, there exists a (quasi-isolated) ground state of \eqref{definition-groundstate} that is in the same phase and in a small $H^{1}$-neighborhood of $u_h$.  The following lemma (which also proves a part of Theorem \ref{approx_theorem}) estimates the corresponding error in energy and shows that it converges to zero at least with the rate $ \|u_h -u\|_{\1}^2$.
\begin{lemma}
\label{energy_theorem} 
Assume \ref{A1}-\ref{A4}, let $(\lambda_h , u_h) \in  \mathbb{R}_{>0} \times \mathcal{U}_h $ denote a discrete ground state and $(\lambda , u) \in  \mathbb{R}_{>0} \times \mathcal{U}$ an arbitrary exact ground state pair, then it holds
\begin{align}
\label{estimate-energy}
|E(u_h) - E(u) | \lesssim \|u_h -u\|_{\1}^2 \, .
\end{align}
\end{lemma}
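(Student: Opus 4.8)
The plan is to expand the energy difference $E(u_h)-E(u)$ directly around the exact ground state $u$ and to use the variational characterization of $u$ to control the resulting first-order term. Setting $e := u_h - u$ and using the reformulation \eqref{energy_function}, I would split $E(u_h)-E(u)$ into a quadratic contribution $\tfrac12(\|u_h\|_{\sR}^2 - \|u\|_{\sR}^2)$ and a quartic contribution $\tfrac{\beta}{4}\int_{\D}(|u_h|^4 - |u|^4)\dx$. Expanding $\|u_h\|_{\sR}^2 = \|u+e\|_{\sR}^2$ and, via $|u_h|^2 = |u|^2 + 2\re(\bar{u}e) + |e|^2$, expanding the quartic integrand, I would isolate in each case the part linear in $e$ from a genuinely higher-order remainder. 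The two linear parts combine precisely to $\langle E'(u), e\rangle$, leaving
\[
E(u_h)-E(u) = \langle E'(u), e\rangle + \tfrac12\|e\|_{\sR}^2 + \tfrac{\beta}{4}\int_{\D}\Big(2|u|^2|e|^2 + \big(2\re(\bar{u}e)+|e|^2\big)^2\Big)\dx .
\]

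The decisive step is to rewrite the first-order term. Since $u$ solves the Gross--Pitaevskii eigenvalue problem, the Euler--Lagrange equation \eqref{eigen_value_problem_2} together with $E'(u)=\Acal_{|u|}(u)$ gives $\langle E'(u), e\rangle = \lm\,(u,e)_{\0}$. As $u$ and $u_h$ are both $L^2$-normalized, expanding $\|e\|_{\0}^2 = \|u_h-u\|_{\0}^2 = 2 - 2(u,u_h)_{\0}$ shows $(u,u_h)_{\0} = 1 - \tfrac12\|e\|_{\0}^2$, and therefore $(u,e)_{\0} = (u,u_h)_{\0} - 1 = -\tfrac12\|e\|_{\0}^2$. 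This turns the first-order term into $-\tfrac{\lm}{2}\|e\|_{\0}^2$, which is already of quadratic order and bounded by $\|e\|_{\1}^2$. I would emphasize that this uses only the normalization of $u$ and $u_h$ and holds regardless of their relative phase, consistent with the lemma permitting an arbitrary exact ground state $u$.

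The remaining terms are routine. The quadratic remainder obeys $\tfrac12\|e\|_{\sR}^2 \lesssim \|e\|_{\1}^2$ by the norm equivalence \eqref{norm-equivalence-nablaR}. For the quartic remainder I would control $\int_{\D}|u|^2|e|^2$ and $\int_{\D}|e|^4$ through Hölder's inequality combined with the Sobolev embedding $H^1_0(\D)\hookrightarrow L^4(\D)$ (valid for $d=2,3$) and the uniform $H^1$-bound on ground states noted before Proposition \ref{convergence_theorem}, giving $\int_{\D}|u|^2|e|^2 \lesssim \|e\|_{\1}^2$ and $\int_{\D}|e|^4 \lesssim \|e\|_{\1}^4 \lesssim \|e\|_{\1}^2$ since $\|e\|_{\1}$ stays bounded. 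Summing the contributions yields $|E(u_h)-E(u)| \lesssim \|e\|_{\1}^2$. I expect the only delicate point to be the treatment of the first-order term: in contrast to an unconstrained minimization it does not vanish, but the normalization constraint reduces it to quadratic order, which is precisely what makes the estimate work.
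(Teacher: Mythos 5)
Your proof is correct and is essentially the paper's argument in different packaging: your exact expansion $E(u_h)-E(u)=\langle E'(u),e\rangle+\tfrac12\|e\|_{\sR}^2+\tfrac{\beta}{4}\int_{\D}\big(2|u|^2|e|^2+(2\re(\bar{u}e)+|e|^2)^2\big)\dx$, once you substitute $\langle E'(u),e\rangle=\lambda(u,e)_{\0}=-\tfrac{\lambda}{2}\|e\|_{\0}^2$, coincides algebraically with the identity the paper derives, namely $E(u_h)-E(u)=\tfrac12\langle(\Acal_{|u|}-\lambda\mathcal{I})e,e\rangle+\tfrac{\beta}{4}\int_{\D}(|u_h|^2-|u|^2)^2\dx$. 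Both routes rest on the same three ingredients---the Euler--Lagrange equation, the normalization identity $(u,e)_{\0}=-\tfrac12\|e\|_{\0}^2$ (which the paper uses implicitly in the step $2\lambda(u_h-u,u)_{\0}=-\lambda\|u_h-u\|_{\0}^2$), and H\"older/Sobolev bounds with the uniform $H^1$-boundedness for the quartic remainder---so your treatment of the first-order term is exactly the paper's mechanism, just presented as a Taylor expansion rather than operator algebra.
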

\begin{proof} 
Using the definition of the energy $E$ and its first Fr\'echet derivative, we can express the difference of a discrete and the respective actual ground state energy by
\begin{equation}
\label{energy_diff}
E(u_h)-E(u) = \frac{1}{2}\big(\langle \Acal_{|u|}u_h,u_h \rangle - \langle \Acal_{|u|}u,u \rangle \big) +\frac{\beta}{4}\int_{\mathcal{D}}|u_h|^4-|u|^4-2|u|^2(|u_h|^2-|u|^2)\, .
\end{equation}
Noting that $\langle \Acal_{|u|}u_h,u_h \rangle - \langle \Acal_{|u|}u,u \rangle =\langle \Acal_{|u|}(u_h-u),u_h \rangle +\langle \Acal_{|u|}(u_h-u),u \rangle $, the first term on the right hand side of equation \eqref{energy_diff} can be expressed as
 \begin{eqnarray*}
 \lefteqn{ \langle \Acal_{|u|}u_h,u_h \rangle - \langle \Acal_{|u|}u,u \rangle = \langle \Acal_{|u|}(u_h-u),u_h -u\rangle +2 \, \lambda \, (u_h -u,u)_{\0} }\\
&=& \langle \Acal_{|u|}(u_h-u),u_h -u\rangle - \lambda \, (u_h -u,u_h -u)_{\0}
\,\,\,= \,\,\,\langle (\Acal_{|u|}-\lambda \mathcal{I})(u_h -u),u_h -u \rangle,\hspace{150pt}
 \end{eqnarray*}
where the penultimate equality follows from the fact  $\| u_h \|_{\0}=\| u\|_{\0}=1$ since $u,u_h\in \mathbb{S}$. With this, the continuity of $(\Acal_{|u|}-\lambda \mathcal{I})$ implies
 \begin{eqnarray}
 \label{diff.1}
 \left| \langle \Acal_{|u|}u_h,u_h \rangle - \langle \Acal_{|u|}u,u \rangle \right| \lesssim \| u_h -u \|_{\1}^2.
 \end{eqnarray}
 It remains to estimate the second term in \eqref{energy_diff}. For this, we first note that 
\begin{equation*}
 |u_h|^4-|u|^4-2|u|^2(|u_h|^2-|u|^2) = (|u_h|^2 -|u|^2)^2 \, \leq \, |u_h - u|^2 (|u_h| +|u|)^2, 
 \end{equation*}
which we can further estimate with the Cauchy-Schwarz inequality, the uniform boundedness of discrete ground states and the Sobolev embedding $H_{0}^{1}(\D) \hookrightarrow L^{4}(\D)$ for $(d \leq 3)$ by 
\begin{align}
\nonumber |\int_{\mathcal{D}}|u_h|^4-|u|^4-2|u|^2(|u_h|^2-|u|^2) \, | & \lesssim \| \, |u_h - u|^2 \, \|_{L^{2}(\D)}\, \| \, (|u_h| +|u|)^2\, \|_{L^{2}(\D)}\\
\label{diff.2}  & \lesssim  \| \, u_h - u \, \|^{2}_{H^{1}(\D)}.
\end{align}
By putting \eqref{diff.1} and \eqref{diff.2} back in \eqref{energy_diff} we get the desired result. 
\end{proof}


\subsection{Proof of $H^{1}$-quasi optimality}
\label{subsection-H1-estimate}
In order to get explicit estimates for the error, we need to study the $H^1$-best-approximation properties of functions in $\4$ by functions in $V_h \cap \4$, as well as functions in $\5$ by functions in $V_h \cap \5$. The approximation properties in $\4$ are directly required to estimate the $H^1$-error between $u \in \4$ and $u_h \in V_h \cap \4$, whereas the $L^2$-error estimates and the estimates for the eigenvalue error require to study dual problems posed in $\5$.
\begin{lemma}[$H^1$-quasi best-approximations]
\label{projection-property-lemma}
Assume \ref{A1}-\ref{A5}, let $h$ be sufficiently small. Then, for all $v \in \4$, it holds
\begin{align}
\label{projection_property-1} \inf_{v_h \in V_h \cap \4} \|v- v_h\|_{\1} \lesssim  \inf_{v_h \in V_{h}}\|v-v_h\|_{\1}
\end{align}
and for all $w \in \5$ it holds
\begin{align}
\label{projection_property}  \inf_{v_h \in V_h \cap \5} \|w-v_h\|_{\1} \lesssim  \inf_{v_h \in V_{h}}\|w-v_h\|_{\1}.
\end{align}
\end{lemma}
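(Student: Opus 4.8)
The plan is to reduce both constrained best-approximation estimates to the unconstrained one by correcting an arbitrary discrete approximant with a small multiple of one (resp.\ two) fixed discrete functions that carry the constraint(s).

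For the first estimate \eqref{projection_property-1}, fix $v \in \4$ and let $v_h \in V_h$ be arbitrary. I introduce a fixed correction function $\phi_h \in V_h$ approximating $\ci u$, e.g.\ $\phi_h := \Pi_h(\ci u)$ for the nodal Lagrange interpolant $\Pi_h$ (which is well defined and $H^1$-stable since $u \in C^0(\overline{\D}) \cap H^2(\D)$ by Lemma \ref{gs-h2-regularity}), so that $\phi_h \to \ci u$ in $\0$ and $\|\phi_h\|_{\1} \lesssim 1$ uniformly in $h$. Because $(\ci u, \ci u)_{\0} = \|u\|_{\0}^2 = 1$, one has $(\ci u, \phi_h)_{\0} \to 1$, hence $(\ci u, \phi_h)_{\0} \ge \tfrac12$ for all sufficiently small $h$. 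I then set
$$\tilde v_h := v_h - \frac{(\ci u, v_h)_{\0}}{(\ci u, \phi_h)_{\0}}\, \phi_h \in V_h \cap \4,$$
which satisfies the constraint by construction. Since $v \in \4$ gives $(\ci u, v)_{\0} = 0$, I write $(\ci u, v_h)_{\0} = (\ci u, v_h - v)_{\0}$ and bound it by $\|v_h - v\|_{\0} \le \|v_h - v\|_{\1}$ using $\|\ci u\|_{\0} = 1$. Combining with the uniform lower bound on the denominator and $\|\phi_h\|_{\1} \lesssim 1$ yields $\|v - \tilde v_h\|_{\1} \lesssim \|v - v_h\|_{\1}$, and taking the infimum over $v_h \in V_h$ gives \eqref{projection_property-1}.

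For the second estimate \eqref{projection_property}, the same idea applies with two correction functions $\phi_h^{(1)} := \Pi_h u$ and $\phi_h^{(2)} := \Pi_h(\ci u)$, chosen to enforce the two constraints $(u, \cdot)_{\0} = 0$ and $(\ci u, \cdot)_{\0} = 0$ defining $\5 = \4 \cap \tangentspace{u}$. Given $w \in \5$ and arbitrary $w_h \in V_h$, I make the ansatz $\tilde w_h := w_h - c_1 \phi_h^{(1)} - c_2 \phi_h^{(2)}$ and determine $(c_1, c_2)$ from the $2 \times 2$ linear system enforcing $(u, \tilde w_h)_{\0} = (\ci u, \tilde w_h)_{\0} = 0$, whose coefficient matrix has entries $(u, \phi_h^{(j)})_{\0}$ and $(\ci u, \phi_h^{(j)})_{\0}$. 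The crucial point is that this matrix converges as $h \to 0$ to the Gram matrix of $\{u, \ci u\}$ in the real $\0$-inner product, which equals the identity because $(u,u)_{\0} = (\ci u, \ci u)_{\0} = 1$ and the real orthogonality $(u, \ci u)_{\0} = \re\big(-\ci \int_{\D} |u|^2 \dx\big) = 0$. Hence for $h$ small enough the matrix is uniformly invertible, and since $w \in \5$ makes the right-hand side $\big((u, w_h)_{\0}, (\ci u, w_h)_{\0}\big) = \big((u, w_h - w)_{\0}, (\ci u, w_h - w)_{\0}\big)$ of order $\|w_h - w\|_{\1}$, I obtain $|c_1| + |c_2| \lesssim \|w_h - w\|_{\1}$ and therefore $\|w - \tilde w_h\|_{\1} \lesssim \|w - w_h\|_{\1}$; taking the infimum completes the proof.

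The routine ingredients — $H^1$-stability and $\0$-convergence of $\Pi_h$ on $H^2$-functions together with the Cauchy--Schwarz bounds on the constraint functionals — are standard. The only point requiring genuine care is the uniform non-degeneracy of the correction system for small $h$: I expect this to be the main (though mild) obstacle, and it hinges precisely on the exact real-$L^2$-orthogonality of $u$ and $\ci u$ in the limit, which guarantees that the limiting $2\times2$ Gram matrix is the identity rather than merely invertible.
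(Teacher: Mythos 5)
Your proof is correct and is essentially the paper's own argument: the paper likewise restores the constraint by subtracting a small discrete multiple of an approximation of $u$ (there the $H^1$-projection $P_{H^1\hspace{-2pt},h}u$ rather than the interpolant $\Pi_h u$), with the non-degeneracy you flag coming from $\int_{\D} P_{H^1\hspace{-2pt},h}u \,\overline{u}\dx \rightarrow \int_{\D} u\overline{u}\dx = 1$. The only cosmetic difference is that the paper exploits the observation that $\5$ is the orthogonal complement of $u$ in the \emph{complex} $L^2$-inner product, so the two real constraints are handled by one correction with a single complex coefficient --- which is exactly your real $2\times 2$ system in disguise, since $\phi_h^{(2)} = \ci\,\phi_h^{(1)}$ means your correction is $(c_1 + \ci\, c_2)\,\Pi_h u$ and your Gram matrix is the real form of the single complex equation.
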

\begin{proof}
Exemplarily we only prove \eqref{projection_property}. The proof of \eqref{projection_property-1} is analogous with obvious modifications. Now, consider the unconstrained $H^1$-projection $P_{H^1,h} : H^1_0(\D) \rightarrow V_h$ given by
$$
( P_{H^1,h} w , v_h )_{\1} =  ( w , v_h )_{\1} \qquad \mbox{for all } v_h \in V_h
$$
and the auxiliary projection $P_{L^2,h}^{\perp} : V_{h}  \rightarrow V_{h} \cap  \5$ that maps a function of $V_h$ to the orthogonal complement of $u$ with respect to the {\it complex} $L^2$-inner product, i.e.,
\begin{align*}
P_{L^2,h}^{\perp} w_h := w_h - \frac{ \int_{\D} w_h \,\, \overline{u} \dx}{ \int_{\D} P_{H^1\hspace{-2pt},h} u \,\, \overline{u} \dx } P_{H^1\hspace{-2pt},h} u 
\qquad \mbox{for } w_h \in V_h.
\end{align*}
We obtain for every $w \in \5$:
\begin{align*}
 \inf_{v_h \in V_{h} \cap \5}\|w-v_h\|_{\1} &\le 
\|w- (P_{L^2,h}^{\perp} \circ P_{H^1\hspace{-2pt},h}) w \|_{\1}  \\
&\le \| w - P_{H^1\hspace{-2pt},h} w \|_{\1}  + \| \frac{ \int_{\D} (w - P_{H^1\hspace{-2pt},h} w) \, \overline{u} \dx }{ \int_{\D} P_{H^1\hspace{-2pt},h} u  \,\, \overline{u} } P_{H^1\hspace{-2pt},h} u \|_{\1} \\
&\le \underbrace{\left( 1 + \frac{\| P_{H^1\hspace{-2pt},h} u \|_{\1} }{1 - \| u - P_{H^1\hspace{-2pt},h}u \|_{\0} } \right)}_{=:C_h}  \| w - P_{H^1\hspace{-2pt},h} w \|_{\1},
\end{align*}
where the constant $C_h$ is uniformly bounded for all sufficiently small $h$ since
\begin{align*}
\| P_{H^1\hspace{-2pt},h} u \|_{\1} \le  \| u \|_{\1} 
\qquad \mbox{and} \qquad  \| u - P_{H^1\hspace{-2pt},h} u \|_{\0} \rightarrow 0 
\mbox{ for } h\rightarrow 0.
\end{align*}
The $H^1$-optimality of $P_{H^1\hspace{-2pt},h}$ on $V_h$ finishes the proof.\\
\end{proof}

In order to derive the desired $H^{1}$-error estimate \eqref{2.1} as stated in Theorem \ref{approx_theorem}, we will make use of the following abstract result which can be e.g. found in \cite[Theorem 4]{PJRJ94}.
\begin{theorem}
\label{unique_discrete_solution}
Let $X$ be a Hilbert space and $X_h \subset X$ a finite dimensional space. Further, let $\Jcal: X \rightarrow X^{*}$ be a $C^{1}$-mapping and $u \in X$ such that 
$$\langle \Jcal(u) ,v \rangle_{X^{*} \times X} = 0 \quad \mbox{ for all } v \in X \, . $$
We assume
\begin{enumerate}
\item[(i)] the bilinear form $ \langle \Jcal'(u) \,\cdot \, , \cdot \, \rangle$ defined on $X \times X$ is symmetric and $\Jcal'(u)$  is an isomorphism from $X$ to $X^{*}$, i.e., there exists a constant $\alpha >0$ such that 
$$ \inf_{v  \in X} \sup_{w  \in X} \frac{\langle \Jcal'(u)w,v \rangle_{X^{*} \times X}}{\| w \|_X \| v \|_{X}} \geq \alpha >0 \, ,$$
\item[(ii)] $\Jcal'(u)$ satisfies the discrete inf-sup condition, i.e., there is a constant $\tilde{\alpha} >0$ such that 
$$ \inf_{v  \in X_h} \sup_{w  \in X_h} \frac{\langle \Jcal'(u)w,v \rangle_{X^{*} \times X}}{\| w\|_X \| v\|_X } \geq \tilde{\alpha} >0 \, ,$$
\item[(iii)] $\Jcal'$ is Lipschitz continuous at $u$, i.e., there exists a constant $\varepsilon_0 >0$ and $L$ such that for all $v \in X$, $\|u-v\|_X \leq \varepsilon_0$, we have 
$$ \|\Jcal'(u) -\Jcal'(v)\|_{X^{*}} \leq L \|u-v\|_{X} \, .$$
\end{enumerate}
Then for each sufficiently small $\delta >0$, there exists a $h_{\delta} >0$ such that for all $h \in (0,h_{\delta}]$ there is unique $u_h \in X_h$ satisfying 
\begin{align*}
 \langle \Jcal(u_h), v_h \rangle_{X^{*} \times X}= 0 \quad \mbox{for all } v_h \in X_h  \quad \mbox{ and } \quad \|u-u_h\|_{X} \leq \delta \, .
\end{align*}
Moreover we have the following error estimate:
\begin{align*}
\|u-u_h\|_{X} \leq \frac{2 \|\Jcal'(u)\|_{X^{*}}}{\alpha} \left( 1+ \frac{\|\Jcal'(u)\|_{X^{*}}}{\tilde{\alpha}} \right) \inf_{v_h \in X_h}\|u-v_h\|_{X} \, .
\end{align*}
\end{theorem}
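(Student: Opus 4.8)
The plan is to recast the discrete problem as a fixed-point equation and to apply the Banach fixed-point theorem, i.e. a frozen-Jacobian Newton (Brezzi--Rappaz--Raviart / Pousin--Rappaz) argument. First I would use the symmetry together with the discrete inf-sup condition (ii) to observe that the bilinear form $b(w_h,v_h):=\langle \Jcal'(u)w_h,v_h\rangle_{X^*\times X}$ is nondegenerate on the finite-dimensional space $X_h \times X_h$; hence the associated operator $\Acal_h : X_h \to X_h^{*}$, $\langle \Acal_h w_h, v_h\rangle := b(w_h,v_h)$, is an isomorphism with $\|\Acal_h^{-1}\|\le \tilde{\alpha}^{-1}$. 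This allows me to define the map $F:X_h \to X_h$ by $F(w_h):=w_h - \Acal_h^{-1}\big(\Jcal(w_h)|_{X_h}\big)$, whose fixed points are exactly the sought discrete solutions $\langle \Jcal(u_h),v_h\rangle=0$ for all $v_h\in X_h$.

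The central difficulty is that $u\notin X_h$ in general, so $F$ cannot be centred at $u$ itself. To handle this I would introduce the elliptic projection $R_h u\in X_h$ defined by $b(R_h u,v_h)=\langle \Jcal'(u)u,v_h\rangle$ for all $v_h\in X_h$, which is well defined by (ii) and, using symmetry and continuity, satisfies a C\'ea-type bound $\|u-R_h u\|_X \le (1+\|\Jcal'(u)\|_{X^*}/\tilde{\alpha})\inf_{v_h}\|u-v_h\|_X =: \eta_h \to 0$. Then I would show that $F$ maps the closed ball $\overline{B}(R_h u,r)\cap X_h$ into itself and is a contraction there: writing $\Jcal(w_h)-\Jcal(u)=\int_0^1 \Jcal'(u+t(w_h-u))(w_h-u)\,\mathrm{d}t$ and using $\Jcal(u)|_{X_h}=0$, the Lipschitz estimate (iii) yields the Taylor remainder bound $\|\Jcal(w_h)-\Jcal'(u)(w_h-u)\|_{X^*}\le \tfrac{L}{2}\|w_h-u\|_X^2$, while the defining property of $R_h u$ cancels the linear part. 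Combining with $\|\Acal_h^{-1}\|\le\tilde{\alpha}^{-1}$ gives $\|F(w_h)-R_hu\|_X \le \tfrac{L}{2\tilde{\alpha}}(r+\eta_h)^2$ and a Lipschitz constant for $F$ bounded by $\tfrac{L}{\tilde{\alpha}}(r+\eta_h)$; for $r$ and $h$ chosen small enough these are $\le r$ and $<1$ respectively, so Banach's theorem produces a unique fixed point $u_h\in\overline{B}(R_hu,r)$ with $\|u-u_h\|_X\le \eta_h+r\le\delta$ after shrinking the parameters. Uniqueness in the full $\delta$-ball follows because any two such solutions are fixed points of the same contraction.

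For the error estimate I would exploit the Galerkin orthogonality $\langle \Jcal(u_h)-\Jcal(u),v_h\rangle=0$, valid for all $v_h\in X_h$ since both terms vanish on $X_h$. A Taylor expansion about $u$ gives, for every $v_h\in X_h$, the identity $\langle \Jcal'(u)(u_h-u),v_h\rangle=-\langle r_h,v_h\rangle$ with $\|r_h\|_{X^*}\le\tfrac{L}{2}\|u-u_h\|_X^2$. Inserting an arbitrary $w_h\in X_h$, applying the discrete inf-sup (ii) to $u_h-w_h\in X_h$ together with the continuity $\|\Jcal'(u)\|_{X^*}$, and using the triangle inequality leads to $\|u-u_h\|_X \lesssim (1+\|\Jcal'(u)\|_{X^*}/\tilde{\alpha})\inf_{v_h}\|u-v_h\|_X + \tfrac{L}{2\tilde{\alpha}}\|u-u_h\|_X^2$. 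Since the a priori smallness $\|u-u_h\|_X\le\delta$ renders the quadratic term absorbable into the left-hand side, I obtain the claimed quasi-best-approximation bound, the explicit prefactor being recovered by tracking the continuity norm $\|\Jcal'(u)\|_{X^*}$ together with the continuous and discrete stability constants $\alpha$ (from (i), which also guarantees that $u$ is locally isolated) and $\tilde{\alpha}$ (from (ii)). The main obstacle is exactly this interplay: the existence argument forces the contraction radius, and hence $h$, to be small in order to control $\|u-u_h\|_X$ a priori, and it is precisely this smallness that afterwards permits the absorption of the nonlinear remainder in the final error identity, so the two parts of the proof cannot be decoupled.
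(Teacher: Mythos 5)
Your proposal is correct, but note that the paper itself does not prove this theorem at all: it is imported verbatim with a citation to \cite[Theorem 4]{PJRJ94}, and your frozen-Jacobian contraction argument is precisely the classical Pousin--Rappaz (Brezzi--Rappaz--Raviart type) route underlying that reference, so in substance you have reconstructed the cited proof. Two small points deserve attention. First, your uniqueness claim in the full $\delta$-ball is stated too quickly: a discrete solution $u_h'$ with $\|u-u_h'\|_X\le\delta$ need not lie in the ball $\overline{B}(R_hu,r)$ on which you established the contraction; the one-line repair is to apply Galerkin orthogonality and the Taylor bound $\|\Jcal(u_h)-\Jcal(u_h')-\Jcal'(u)(u_h-u_h')\|_{X^*}\le L\,\delta\,\|u_h-u_h'\|_X$ directly to two solutions in the $\delta$-ball, so that the discrete inf-sup condition gives $\tilde{\alpha}\|u_h-u_h'\|_X\le L\,\delta\,\|u_h-u_h'\|_X$ and hence $u_h=u_h'$ once $\delta<\tilde{\alpha}/L$. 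Second, your absorption argument actually yields the constant $2\left(1+\|\Jcal'(u)\|/\tilde{\alpha}\right)$, which is \emph{sharper} than the stated prefactor $\tfrac{2\|\Jcal'(u)\|}{\alpha}\left(1+\tfrac{\|\Jcal'(u)\|}{\tilde{\alpha}}\right)$, since the inf-sup condition (i) forces $\alpha\le\|\Jcal'(u)\|$; this also makes transparent that hypothesis (i) plays no role in your argument beyond guaranteeing local isolation of $u$, and that the statement implicitly requires the approximability $\inf_{v_h\in X_h}\|u-v_h\|_X\to 0$ as $h\to 0$, which you correctly folded into your quantity $\eta_h\to 0$.
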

In order to write the GPEVP in a form that fits into the setting of Theorem \ref{unique_discrete_solution}, we can follow the idea applied in e.g. \cite{CGHY13,CGHZ11,CHZ11} to express the eigenvalue problem \eqref{eigen_value_problem_2} in compact form based on the operator $\Jcal : \R  \times \3 \rightarrow \R  \times \2$ given by 
\begin{align*}
\langle \Jcal (\sigma , v) ,(\tau , w) \rangle_{\mathbb{R} \times \2 , \mathbb{R} \times \3} := \langle  E'(v)-\sigma \Ical v , w \rangle + \frac{\tau}{2}(1- \int_{\D} |v|^2 ) ,  
\end{align*} 
for $ (\sigma , v)$, $(\tau ,w ) \in \R  \times \3$. The space $\mathbb{R} \times \3$ is equipped with the natural norm $\|(\sigma , v)\|_{\mathbb{R} \times \1}:= |\sigma| + \|v\|_{\1}$. Hence, the eigenvalue problem \eqref{eigen_value_problem_2} can be equivalently expressed as: find $(\lm ,u ) \in \R_{>0}  \times \3$ such that 
\begin{align} 
\label{new-eigenvalue-problem}	
 \langle \Jcal (\lm, u) , (\tau , w) \rangle_{\mathbb{R} \times \2 , \mathbb{R} \times \3} =0 \qquad \mbox{ for all } (\tau , w) \in \mathbb{R} \times \3.
 \end{align}
The Fr\`echet derivative of $\Jcal$ at $(\lm , u)$, denoted by $ \Jcal'(\lm ,u) :\R  \times \3 \rightarrow \R  \times \2$, can be computed as
\begin{align}
\label{j-derivative}
\langle \Jcal' (\lm , u) (\sigma , v) , (\tau , w) \rangle_{\mathbb{R} \times \2 , \mathbb{R} \times \3} = \langle  (E''(u)-\lm \Ical)v , w \rangle - \sigma \langle \Ical u,w   \rangle - \tau \langle \Ical u,v \rangle \, ,
\end{align}
for all directions $(\sigma , v) \in \R  \times \3$ and all $(\tau ,w) \in \R \times \3$. It is easy to see that $\mathcal{J}'$ is Lipschitz continuous at $(\lm,u).$ 

In order to apply the abstract result from Theorem \ref{unique_discrete_solution}, we select $X=\R  \times \4$ based on the tangent space in $\ci u$ (noting that $(\lambda,u)\in \R  \times \4$). It remains to establish that $\mathcal{J}'(\lm , u)$ is an isomorphism on $\R  \times \4$ and the corresponding discrete inf-sup stability on $\R \times (V_h \cap \4)$. The first part is covered by the following lemma.
\begin{lemma}
\label{iso-of-J'}
Assume \ref{A1}-\ref{A5} and let $(\lm , u) \in \R_{>0} \times \mathcal{U}$ denote  a ground state pair of \eqref{eigen_value_problem_2}, which hence solves \eqref{new-eigenvalue-problem}. Then $\Jcal'(\lm , u)$ is an isomorphism from $\R  \times \4$ to $\R  \times (\4)^{*}$.
\end{lemma}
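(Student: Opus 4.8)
The plan is to exploit the direct-sum decomposition $\4 = \5 \oplus \R u$ and thereby reduce the invertibility of $\Jcal'(\lm,u)$ to the invertibility of $E''(u)-\lm\mathcal{I}$ on $\5$ granted by Proposition \ref{prop-inf-sup-stability-Wperpu}. First I would justify the splitting: from $\Ltwo{\ci u}{u} = \re(\ci\,\|u\|_{\0}^2) = 0$ we get $u\in\4$, while $\Ltwo{u}{u}=1$ shows $u\notin\5=\tangentspace{u}\cap\4$. For $v\in\4$, setting $s:=\Ltwo{u}{v}$ makes $v^\perp:=v-su$ lie in $\5$, so $v=v^\perp+su$ with $|s|+\|v^\perp\|_{\1}\lesssim\|v\|_{\1}$; dually, $\R\times(\4)^*$ splits as $\R\times(\5)^*\times\R$, the last factor corresponding to the $u$-direction.

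Next I would substitute $v=v^\perp+su$ and $w=w^\perp+tu$ into \eqref{j-derivative}. The only computation beyond bookkeeping is the identity $(E''(u)-\lm\mathcal{I})u = 2\beta\,\mathcal{I}(u|u|^2)$, which follows from \eqref{sd} and $\Acal_{|u|}u=\lm\,\mathcal{I}u$; it mirrors the kernel relation $(E''(u)-\lm\mathcal{I})\ci u = 0$ except that $u$ stays genuinely coupled. Abbreviating $a(\cdot,\cdot):=\langle(E''(u)-\lm\mathcal{I})\cdot,\cdot\rangle$, $b(\cdot):=2\beta\,\Ltwo{u|u|^2}{\cdot}$ and $c:=2\beta\,\Ltwo{u|u|^2}{u}$, and using $\Ltwo{u}{v^\perp}=\Ltwo{u}{w^\perp}=0$, $\Ltwo{u}{u}=1$, the form reduces to
\begin{align*}
\langle \Jcal'(\lm,u)(\sigma,v),(\tau,w)\rangle = a(v^\perp,w^\perp) + s\,b(w^\perp) + t\,b(v^\perp) + st\,c - \sigma t - \tau s.
\end{align*}

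The decisive observation is that, read against the components $(\tau,w^\perp,t)$ of the test argument, this system is block-triangular. Given a target $(\rho,g,\mu)\in\R\times(\5)^*\times\R$, I would solve by forward substitution: the $\tau$-equation forces $s=-\rho$; the $w^\perp$-equation then reads $a(v^\perp,\cdot)=g-s\,b$ and is uniquely solvable for $v^\perp\in\5$ via the bounded inverse of $E''(u)-\lm\mathcal{I}$ from Proposition \ref{prop-inf-sup-stability-Wperpu}; finally the $t$-equation yields $\sigma=b(v^\perp)+sc-\mu$. Each step is bounded by the data, where $b$ and $c$ are continuous because $u|u|^2\in L^2(\D)$ by Lemma \ref{gs-h2-regularity}. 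Hence $\Jcal'(\lm,u)$ is a continuous bijection with bounded inverse, i.e.\ an isomorphism.

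I expect the main obstacle to be purely structural: recognizing that separating the $u$-direction from $\5$ turns $\Jcal'(\lm,u)$ into a triangular operator whose diagonal blocks are the scalar map $s\mapsto-s$, the isomorphism $a$ on $\5$, and the scalar map $\sigma\mapsto-\sigma$. Once this is seen, the off-diagonal couplings $b$ and $c$ never threaten solvability, and the entire weight of the argument rests on Proposition \ref{prop-inf-sup-stability-Wperpu}.
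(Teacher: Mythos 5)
Your proof is correct, and it reaches the conclusion by a genuinely different, more hands-on route than the paper. The paper's proof treats the equation $\Jcal'(\lm,u)(\sigma,v)=(\gamma,f)$ as an abstract saddle-point problem and invokes Brezzi--Fortin theory \cite[Theorem 1.1]{MR1115205}: unique solvability is reduced to (i) invertibility of $E''(u)-\lm\Ical$ on the kernel of the constraint operator $(\Ical u)_{\vert \4}$, which is exactly $\5$ and is supplied by Proposition \ref{prop-inf-sup-stability-Wperpu}, and (ii) an inf-sup condition for the scalar constraint, verified there by testing with $v=\tau u$ and using $\|u\|_{\0}=1$. You instead make the splitting $\4=\5\oplus\R u$ explicit, expand the bilinear form \eqref{j-derivative} on components, and solve by forward substitution. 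Your computation is sound: the identity $(E''(u)-\lm\Ical)u=2\beta\,\Ical(u\,|u|^2)$ follows from \eqref{sd} and \eqref{eigen_value_problem_2} exactly as you say, the symmetry of $E''(u)-\lm\Ical$ justifies replacing $\langle(E''(u)-\lm\Ical)v^{\perp},u\rangle$ by $b(v^{\perp})$, and the reduced form is indeed triangular in the order $s\to v^{\perp}\to\sigma$. Note that your scalar diagonal entries (coming from $\Ltwo{u}{u}=1$) play precisely the role of the paper's constraint inf-sup, and your middle block is the same appeal to Proposition \ref{prop-inf-sup-stability-Wperpu}, so the two arguments have the same mathematical core; the difference is packaging. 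What your version buys: it is self-contained (no citation of saddle-point theory), it produces explicit formulas for $s$, $v^{\perp}$, $\sigma$ with explicit stability constants, and it isolates the coupling $(E''(u)-\lm\Ical)u=2\beta\,\Ical(u\,|u|^2)$, an identity the abstract theorem never needs but whose source term $u\,|u|^2$ reappears in the dual problem of Section \ref{proof-of-order-theorem}. What the paper's version buys: brevity, and immediate generalizability whenever a kernel characterization and a constraint inf-sup are at hand. Two cosmetic remarks: continuity of $b$ needs only $u\,|u|^2\in\0$, which already follows from $H^1_0(\D)\hookrightarrow L^6(\D)$ without invoking Lemma \ref{gs-h2-regularity}; and when you solve the middle block you implicitly use that the one-sided inf-sup \eqref{inf-sup-stability-Wperpu} yields a two-sided one --- this is immediate from the symmetry of $E''(u)-\lm\Ical$, and in any case Proposition \ref{prop-inf-sup-stability-Wperpu} asserts the bounded inverse directly, so no gap results.
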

\begin{proof}
The lemma is proved if we can show that the equation  
\begin{align}
\label{i1}
\Jcal'(\lm,u) (\sigma , v) = (\gamma ,  f) \, 
\end{align}
admits a unique solution for all $(\gamma ,  f) \in \R  \times (\4)^{*}$. For that, we consider the saddle-point formulation of \eqref{i1}. With the representation of $\Jcal'(\lm,u)$ in \eqref{j-derivative} we obtain:
 \begin{equation}
\begin{split}
\langle  (E''(u)-\lm \Ical)v , w \rangle - \sigma \langle \Ical u , w \rangle &=\langle  f,w \rangle \hspace{22mm} \qquad  \mbox{for all } w \in \4,\\
- \tau \langle \Ical u , v  \rangle&= \langle \gamma \, , \,  \tau \rangle_{\R \times \R} \qquad \hspace{15mm} \mbox{for all } \tau \in \R.
\end{split}\label{saddle-problem}
\end{equation}
The proof is completed by showing that \eqref{saddle-problem} is uniquely solvable. This holds (see \cite[Theorem 1.1]{MR1115205}) iff the variational problem 
\begin{align}
\label{i2}
\langle  (E''(u)-\lm \Ical)v , w \rangle = \langle  f,w \rangle \qquad \mbox{ for all } w \in \text{ker} \,  (\Ical u)_{\vert \4} \, ,
\end{align}
has a unique solution for all $ f \in (\4)^{*}$ and if there exists a constant $\alpha >0$ (independent of $ v, \tau $) such that
\begin{align}
\label{i3}
\alpha \hspace{2mm} \le \hspace{2mm} \inf_{\tau \in \R} \sup_{v \in \4}\frac{-\tau \langle \Ical u ,  v \rangle }{| \tau |\|v\|_{\1}} 
\hspace{2mm} = \hspace{2mm} \inf_{\tau \in \R} \sup_{v \in \4}\frac{\tau \langle \Ical u ,  v \rangle }{| \tau |\|v\|_{\1}}.  
 \end{align}
Since the kernel of  $(\Ical u)_{\vert \4}$ is $\5$, the unique solvability of problem \eqref{i2} follows from Proposition \ref{prop-inf-sup-stability-Wperpu}. Furthermore, we obtain the inf-sup stability \eqref{i3} for arbitrary $\tau \in \R$ with
$$ \sup_{w \in \4}\frac{\tau (u , w)_{\0}}{|\tau| \|w\|_{\1}} \geq \frac{\tau (u, \tau u)_{\0}}{|\tau| \|\tau u\|_{\1}} = \frac{\tau^{2} \|u\|_{\0}^{2}}{|\tau|^{2} \|u\|_{\1}}=\frac{1}{\|u\|_{\1}} = \alpha >0,$$
where we have used the fact that $u \in \mathbb{S}$ and that all ground states $u$ are uniformly bounded in the $H^1$-norm (exploiting energy minimization). Hence, $\Jcal'(\lm , u)$ is an isomorphism from $\R \times \4$ to $\R \times (\4)^{*}$. This completes the proof.
\end{proof}
%
On sufficiently fine meshes, discrete inf-sup stability can be concluded from the continuous inf-sup stability. It is easily seen that the corresponding result is analogous to the proof of Lemma \ref{iso-of-J'} as soon as we have inf-sup stability of $E''(u)-\lambda \mathcal{I}$ on $ V_h \cap \5 $. The proof of the latter statement follows standard arguments and is elaborated in the appendix, see Proposition \ref{discrete-inf-sup-stability}.
With this, we have the following lemma.
\begin{lemma}
\label{dis_infsup_j'}
Assume $h$ to be sufficiently small and that \ref{A1}-\ref{A5} hold. Let $(\lm , u) \in \mathbb{R}_{> 0} \times \mathcal{U}$ be a ground state pair of \eqref{eigen_value_problem_2}, which hence solves \eqref{new-eigenvalue-problem}. Then there exists a $h$-independent constant $\tilde{\alpha}>0$, such that 
$$ \inf_{(\tau_h , w_h)  \in \R \times (V_h \cap \4)} \sup_{(\sigma_h , v_h)  \in \R \times (V_h \cap \4)}\frac{\langle \Jcal '(\lm ,u)(\sigma_h ,v_h), (\tau_h ,w_h) \rangle_{\mathbb{R} \times \2 , \mathbb{R} \times \3}}{\|(\sigma_h , v_h)\|_{\mathbb{R} \times \1} \, \|(\tau_h , w_h)\|_{\mathbb{R} \times \1}  } \geq \tilde{\alpha} \, . $$
\end{lemma}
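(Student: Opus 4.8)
The plan is to derive discrete inf-sup stability for $\Jcal'(\lm,u)$ on the product space $\R \times (V_h \cap \4)$ by exactly mirroring the saddle-point argument used in the proof of Lemma~\ref{iso-of-J'}, replacing the continuous spaces by their discrete counterparts and invoking the discrete version of the inf-sup condition on $V_h \cap \5$ (Proposition~\ref{discrete-inf-sup-stability}) in place of the continuous Proposition~\ref{prop-inf-sup-stability-Wperpu}. The key structural fact is that the bilinear form $\langle \Jcal'(\lm,u)(\sigma,v),(\tau,w)\rangle$ has the block saddle-point form \eqref{saddle-problem}, with the $(v,w)$-block given by $E''(u)-\lm\Ical$ and the coupling blocks given by $\pm\langle \Ical u,\cdot\rangle$. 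For such block systems, the standard Brezzi theory (cf.\ \cite[Theorem~1.1]{MR1115205}) guarantees stability of the whole system provided one has: (a) inf-sup stability of $E''(u)-\lm\Ical$ on the discrete kernel of the constraint, and (b) a discrete inf-sup (LBB) condition for the coupling term $\langle \Ical u,\cdot\rangle$.

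First I would identify the discrete kernel. The constraint block is $\tau \mapsto -\tau\langle \Ical u, v_h\rangle$, so the relevant discrete kernel consists of those $v_h \in V_h \cap \4$ with $\langle \Ical u, v_h\rangle = (u,v_h)_{\0} = 0$, i.e.\ $v_h \in V_h \cap \4 \cap \tangentspace{u} = V_h \cap \5$. On this space, discrete inf-sup stability of $E''(u)-\lm\Ical$ is exactly the content of Proposition~\ref{discrete-inf-sup-stability} from the appendix, which I may assume. This gives the first Brezzi condition.

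Next I would verify the discrete LBB condition for the coupling term, namely that there is an $h$-independent $\alpha>0$ with $\inf_{\tau\in\R}\sup_{v_h\in V_h\cap\4} \tfrac{\tau\,(u,v_h)_{\0}}{|\tau|\,\|v_h\|_{\1}} \ge \alpha$. In the continuous proof this was trivial because $u$ itself lies in $\4$ and one simply tests with $v=\tau u$. The mild subtlety here is that $u \notin V_h$, so I cannot directly plug in $u$; instead I would test with a suitable discrete approximation, for instance the $H^1$-projection $P_{H^1\hspace{-2pt},h}u$ (which lies in $V_h$) followed by its projection into $V_h\cap\4$, or simply verify that a discrete function with $(u,v_h)_{\0}$ bounded below exists. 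Since $(u,P_{H^1\hspace{-2pt},h}u)_{\0} \to \|u\|_{\0}^2 = 1$ as $h\to0$ and $\|P_{H^1\hspace{-2pt},h}u\|_{\1}\le\|u\|_{\1}$, a correction of $P_{H^1\hspace{-2pt},h}u$ into $V_h\cap\4$ (as already constructed in the proof of Lemma~\ref{projection-property-lemma}) yields a test function that makes the supremum bounded below by a positive $h$-independent constant for all sufficiently small $h$.

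Having both Brezzi conditions with $h$-independent constants, the abstract saddle-point stability theory delivers the claimed uniform discrete inf-sup bound for $\Jcal'(\lm,u)$ on $\R\times(V_h\cap\4)$, with a constant $\tilde\alpha>0$ depending only on the two Brezzi constants and the continuity bound of $E''(u)-\lm\Ical$. I expect the main obstacle to be precisely the discrete LBB verification for the coupling term: one must ensure the lower bound is uniform in $h$ despite the fact that the natural test direction $u$ is not discrete, which requires the convergence $\|u-P_{H^1\hspace{-2pt},h}u\|_{\0}\to0$ to control the denominator away from zero. Everything else is a routine transcription of the continuous argument, since $E''(u)-\lm\Ical$ is continuous on $\1$ (so the Brezzi continuity hypotheses hold automatically) and the discrete kernel stability is supplied by the appendix.
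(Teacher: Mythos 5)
Your proposal is correct and follows essentially the same route the paper intends: the paper gives no separate proof of Lemma \ref{dis_infsup_j'}, remarking only that it is analogous to the saddle-point argument of Lemma \ref{iso-of-J'} once the discrete inf-sup stability of $E''(u)-\lm \mathcal{I}$ on $V_h \cap \5$ (Proposition \ref{discrete-inf-sup-stability}) replaces the continuous one, which is exactly your Brezzi argument with discrete kernel $V_h \cap \5$. Your explicit verification of the discrete LBB condition for the coupling block via a projected and corrected approximation of $u$ in $V_h \cap \4$ (using $(u,P_{H^1\hspace{-2pt},h}u)_{\0} \rightarrow 1$) correctly supplies the one detail the paper leaves implicit, namely that the natural test direction $u$ is not itself discrete.
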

With Lemma \ref{iso-of-J'} and Lemma \ref{dis_infsup_j'}, we verified that $\mathcal{J}$ fulfills all the assumptions of the abstract Theorem \ref{unique_discrete_solution} with $X=\mathbb{R} \times \4$ and we can straightforwardly prove the following result.
\begin{theorem}
  \label{new-h1-estimate}
 Assume \ref{A1}-\ref{A5} holds and let $u \in \mathcal{U}$ be a (quasi-isolated) ground state. 
 Then for each sufficiently small $\delta >0$ there is a mesh size $h_{\delta}$ such that for all $h \in (0,h_{\delta}]$ there exist a unique discrete stationary state $(\lambda_h , u_h) \in \R_{> 0} \times (V_h \cap \tangentspace{\ci u})$ with $\langle E^{\prime}(u_h),v_h\rangle =\lambda_h \langle \mathcal{I} u_h, v_h \rangle$ for all $v_h \in V_h$ and such that 
 $$\|u-u_h\|_{\1} \leq \delta\, .$$ 
 Moreover, $u_h$ is a $H^{1}$-quasi best approximation to u, i.e.,
 \begin{align}
 \label{new-h1}
 \|u-u_{h}\|_{\1} \lesssim \inf_{v \in V_{h}} \|u-v\|_{\1}.
 \end{align}
 \end{theorem}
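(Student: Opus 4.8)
The plan is to recognize that, thanks to Lemmas \ref{iso-of-J'} and \ref{dis_infsup_j'}, the statement is essentially an application of the abstract Theorem \ref{unique_discrete_solution}, followed by a decoding of the abstract conclusion back into the language of the eigenvalue problem. Concretely, I would take $X := \R \times \4$ with the norm $\|(\sigma,v)\|_{\R\times\1}=|\sigma|+\|v\|_{\1}$ and $X_h := \R\times(V_h\cap\4)$, and work with $\Jcal$ and $\Jcal'(\lm,u)$ from \eqref{j-derivative}. Observe first that $(\lm,u)\in X$ solves \eqref{new-eigenvalue-problem} and that $u\in\4$, since $(\ci u,u)_{\0}=\re(\ci\int_{\D}|u|^2\dx)=0$.

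The first task is to verify the three hypotheses of Theorem \ref{unique_discrete_solution}. Hypothesis (iii), Lipschitz continuity of $\Jcal'$ at $(\lm,u)$, was already noted just after \eqref{j-derivative}. For hypothesis (i) I would check symmetry of the bilinear form in \eqref{j-derivative}: the cross terms $-\sigma\langle\Ical u,w\rangle-\tau\langle\Ical u,v\rangle$ are manifestly symmetric under $(\sigma,v)\leftrightarrow(\tau,w)$, while $\langle(E''(u)-\lm\Ical)v,w\rangle$ is symmetric because $E''(u)$ is self-adjoint by \eqref{sd} and $\Ical$ is the canonical identification; the isomorphism property on $X$ is precisely Lemma \ref{iso-of-J'}. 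Hypothesis (ii), the discrete inf-sup condition on $X_h$, is Lemma \ref{dis_infsup_j'}. Theorem \ref{unique_discrete_solution} then delivers, for every sufficiently small $\delta$ and all $h\le h_\delta$, a unique $(\lm_h,u_h)\in X_h$ with $\langle\Jcal(\lm_h,u_h),(\tau_h,w_h)\rangle=0$ on $X_h$ and $\|(\lm,u)-(\lm_h,u_h)\|_{\R\times\1}\le\delta$, together with the quasi-best bound $\|(\lm,u)-(\lm_h,u_h)\|_{\R\times\1}\lesssim\inf_{(\tau_h,w_h)\in X_h}\|(\lm,u)-(\tau_h,w_h)\|_{\R\times\1}$. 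Shrinking $\delta$ if necessary forces $\lm_h>0$.

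The step I expect to be the genuine obstacle is \emph{decoding} this abstract output into the claimed \emph{full} discrete eigenvalue equation on $V_h$. Testing $\langle\Jcal(\lm_h,u_h),(\tau_h,0)\rangle=0$ yields the normalization $\|u_h\|_{\0}=1$, and testing with $(0,w_h)$ yields $\langle E'(u_h),w_h\rangle=\lm_h(u_h,w_h)_{\0}$ only for $w_h\in V_h\cap\4$, whereas the theorem asserts this identity for \emph{all} $v_h\in V_h$. To close the gap I would use that $V_h$ is closed under multiplication by $\ci$, so $\ci u_h\in V_h$, and that $\ci u_h\notin\4$, since $(\ci u,\ci u_h)_{\0}=(u,u_h)_{\0}=\re\int_{\D}u\overline{u_h}\dx>0$ because $u_h$ and $u$ are in the same phase. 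Hence $V_h=(V_h\cap\4)\oplus\mathrm{span}_{\R}\{\ci u_h\}$, and it remains to verify the identity in the single direction $\ci u_h$. There both sides vanish: $(u_h,\ci u_h)_{\0}=\re(-\ci\int_{\D}|u_h|^2\dx)=0$, and the phase invariance $E(e^{\ci\theta}u_h)=E(u_h)$, differentiated at $\theta=0$, gives $\langle E'(u_h),\ci u_h\rangle=0$. Since the residual $v_h\mapsto\langle E'(u_h),v_h\rangle-\lm_h(u_h,v_h)_{\0}$ is $\R$-linear on $V_h$ and vanishes on both summands, it vanishes on all of $V_h$, as required.

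Finally, for the quasi-optimality \eqref{new-h1} I would specialize the abstract error bound. As the component $\tau_h\in\R$ is free, choosing $\tau_h=\lm$ annihilates the eigenvalue contribution, so that $\|u-u_h\|_{\1}\le\|(\lm,u)-(\lm_h,u_h)\|_{\R\times\1}\lesssim\inf_{w_h\in V_h\cap\4}\|u-w_h\|_{\1}$. Because $u\in\4$, the $H^1$-quasi-best-approximation estimate \eqref{projection_property-1} of Lemma \ref{projection-property-lemma} converts the constrained infimum over $V_h\cap\4$ into the unconstrained one, giving $\|u-u_h\|_{\1}\lesssim\inf_{v\in V_h}\|u-v\|_{\1}$.
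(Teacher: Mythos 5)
Your proposal is correct and follows essentially the same route as the paper: apply the abstract Theorem \ref{unique_discrete_solution} on $X=\R\times\4$ and $X_h=\R\times(V_h\cap\4)$ via Lemmas \ref{iso-of-J'} and \ref{dis_infsup_j'}, kill the eigenvalue contribution by choosing $\tau_h=\lm$ in the abstract quasi-optimality bound, and convert the constrained infimum over $V_h\cap\4$ into the unconstrained one with \eqref{projection_property-1}. Your additional decoding step --- extending the Euler--Lagrange identity from test functions in $V_h\cap\4$ to all of $V_h$ via the splitting $V_h=(V_h\cap\4)\oplus\mathrm{span}_{\R}\{\ci u_h\}$, the orthogonality $(u_h,\ci u_h)_{\0}=0$, and the phase-invariance identity $\langle E'(u_h),\ci u_h\rangle=0$ --- is a correct and welcome elaboration of a point the paper's proof leaves implicit.
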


\begin{proof}
Let $u \in \mathcal{U}$ be a given ground state with ground state eigenvalue $\lambda$, then it holds $\mathcal{J}(\lambda,u)=0$. Thanks to Lemma \ref{iso-of-J'} and Lemma \ref{dis_infsup_j'}, we see that the assumptions of Theorem \ref{new-h1-estimate} are fulfilled for all sufficiently fines meshes. Hence, we get the existence of a unique discrete stationary state $(\lambda_h , u_h) \in \R_{> 0} \times (V_h \cap \4)$ such that
  \begin{align*}
  | \lm -\lm_h | + \|u-u_{h}\|_{\1} &\lesssim \inf_{(\sigma_h , v_h ) \in \R \times (V_h \cap \4)} (|\lm - \sigma_h | + \|u-v_h\|_{\1}) \\
  &=  \inf_{v_h \in V_h \cap \4}  \|u-v_h\|_{\1}.
  \end{align*}
  The proof is finished by recalling property \eqref{projection_property-1} from Lemma \ref{projection-property-lemma} for $u\in \4$.
\end{proof}  
Note that Theorem \ref{new-h1-estimate} complements Theorem \ref{approx_theorem} in the sense that it states that every exact ground state  $u \in \mathcal{U}$ can be (asymptotically) approximated with quasi-optimal accuracy. On the contrary, Theorem \ref{approx_theorem} does not address arbitrary ground states $u$, but it rather makes predictions about arbitrary discrete ground states $u_h$ and their individual relation to a {\it specific} exact ground state $u$. With this in mind, it remains to formally conclude estimate \eqref{2.1} in Theorem \ref{approx_theorem} from the above setting. 
 \begin{proof}[Proof of estimate \eqref{2.1} in Theorem \ref{approx_theorem}]
Let the mesh size $h$ be small enough and let $u_h \in \mathcal{U}_h$ be a given discrete ground state. From Proposition \ref{convergence_theorem} we know that in a sufficiently small $\delta$-neighborhood of $u_h$ there exists a unique exact ground state $u \in \mathcal{U}$ with $u_h \in \tangentspace{\ci u}$. On the contrary, Theorem \ref{new-h1-estimate} guarantees the existence of unique discrete stationary state $\tilde{u}_h  \in \tangentspace{\ci u}$ in a sufficiently small neighborhood of $u$ for which \eqref{new-h1} holds. Since every discrete ground state is in particular a stationary state, the uniqueness implies $\tilde{u}_h=u_h$ and the $H^1$-quasi best approximation property \eqref{new-h1} is also valid for $u_h$. This proves \eqref{2.1}.
 \end{proof}
 

\subsection{Proof of $L^2$-error estimates}
\label{subsection-L2-estimate}
Recalling that we already proved the estimates \eqref{2.-1} (cf. Lemma \ref{energy_theorem}) and \eqref{2.0} (cf. Lemma \ref{lambdah-lambda}), as well as \eqref{2.1}  (cf. Theorem \ref{new-h1-estimate}), it only remains to check the $L^2$-error estimate \eqref{2.2} to finish the proof of Theorem \ref{approx_theorem}. This will be done in this section.

For brevity, let 
$$ P_h : \5 \rightarrow V_{h} \cap \5$$
denote the $H^1$-projection on $V_{h} \cap \5$ which is defined, for $w\in \5$, by
\begin{align}
\label{definition-Ph}
(  \nabla(w-P_{h}w), \nabla v_h )_{\0} =0 \hspace{3mm}  \qquad \mbox{for all } v_h \in V_{h} \cap \5.
\end{align}
Recalling Lemma \ref{projection-property-lemma}, we know that for all sufficiently small mesh sizes $h$ and all $w\in \5$ it holds
\begin{align}
\label{best-approx-Ph}
\|w-P_{h}w\|_{\1} \lesssim  \inf_{v_h \in V_{h}}\|w-v_h\|_{\1}.
\end{align}
By the properties of the finite element space $V_h$, we have in particular 
\begin{align}
\label{P_property2}
\|w - P_h w \|_{\1} & \lesssim h |w|_{H^2(\D)} \qquad \mbox{for all } w \in \5 \cap H^2(\D).
\end{align}
In the following, we will also make use of an $L^2$-orthogonal decomposition of $H^1_0(\D)$ into
\begin{align}
\label{decom} 
H^1_0(\D) = \mbox{span}\{u\} \oplus \mbox{span}\{\ci u\} \oplus \5 \,.
\end{align}
Here we note that the span refers to the {\it real}-valued span, i.e., $\mbox{span}\{u\} := \{ \alpha u \, | \, \alpha \in \R\}$ and $\mbox{span}\{\ci u\} := \{ \alpha \ci u \, | \, \alpha \in \R\}$. With this, we obtain a decomposition of the error $u_h-u$ as follows.  
\begin{lemma}
\label{error_decomp}
Let $u \in \mathcal{U}$ be a ground state and $u_h \in \mathcal{U}_{h}$ a corresponding discrete ground state such that $\int_{\D}u_h \overline{u} \dx \in \mathbb{R}_{\ge 0}$. Then, there exists $v \in \5$ with $0 \leq \|v\|^2_{\0} \leq 1$ such that
\begin{align}
\label{d2}
 u_h = \sqrt{ 1-\|v\|^2_{\0}}  \,\, u + v \, .
 \end{align}
In particular, with the constant $c(v) := \sqrt{ 1-\|v\|^2_{\0}} - 1$, we can express the error as 
\begin{align}
\label{d3}
u_h - u = c(v)\, u + v \, , \quad \mbox{ where } \quad |c(v)|
\leq \|u_h-u\|^2_{\0}.
\end{align}
\end{lemma}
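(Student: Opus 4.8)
The plan is to write $u_h$ in the $L^2$-orthogonal decomposition \eqref{decom} and then to use the two scalar constraints carried by $u_h$, namely the \emph{same-phase} hypothesis and the normalization $u_h \in \mathbb{S}$. First I would decompose
$$
u_h = a\, u + b\, \ci u + v, \qquad a,b \in \R,\quad v \in \5,
$$
which is possible by \eqref{decom}. The coefficients are read off from the \emph{complex} $L^2$-pairing: since $\int_{\D} u\, \overline{u}\dx = 1$, $\int_{\D} \ci u\, \overline{u}\dx = \ci$, and $\int_{\D} v\, \overline{u}\dx = 0$ (the latter because $\5$ is, by definition, the complex $L^2$-orthogonal complement of $u$), one gets $\int_{\D} u_h\, \overline{u}\dx = a + b\,\ci$. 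The same-phase assumption $\int_{\D} u_h\, \overline{u}\dx \in \R_{\ge 0}$ then forces $b = 0$ and $a \ge 0$, so that $u_h = a\,u + v$ with $a \ge 0$.

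Next I would exploit $u_h \in \mathbb{S}$. Since $v \in \5$ satisfies in particular $\Ltwo{u}{v} = \re \int_{\D} u\, \overline{v}\dx = 0$, the components $u$ and $v$ are $L^2$-orthogonal, and Pythagoras yields
$$
1 = \norm{u_h}_{\0}^2 = a^2\, \norm{u}_{\0}^2 + \norm{v}_{\0}^2 = a^2 + \norm{v}_{\0}^2 .
$$
Hence $a^2 = 1 - \norm{v}_{\0}^2$, which in particular gives $0 \le \norm{v}_{\0}^2 \le 1$, and, using $a \ge 0$, we conclude $a = \sqrt{1 - \norm{v}_{\0}^2}$. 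This is exactly \eqref{d2}. Subtracting $u$ then produces $u_h - u = (a-1)u + v = c(v)\,u + v$ with $c(v) = \sqrt{1 - \norm{v}_{\0}^2} - 1 = a - 1 \le 0$.

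Finally, for the estimate in \eqref{d3} I would reuse the same orthogonality together with $\norm{v}_{\0}^2 = 1 - a^2 = (1-a)(1+a)$ to compute
$$
\norm{u_h - u}_{\0}^2 = (1-a)^2\, \norm{u}_{\0}^2 + \norm{v}_{\0}^2 = (1-a)^2 + (1-a)(1+a) = 2(1-a) = 2\,|c(v)| ,
$$
so that in fact $|c(v)| = \tfrac12 \norm{u_h - u}_{\0}^2 \le \norm{u_h - u}_{\0}^2$, which is the claimed bound. I do not anticipate a genuine obstacle in this lemma; the only point demanding care is keeping the distinction between the real inner product $\Ltwo{\cdot}{\cdot}$ and the complex pairing $\int_{\D} \cdot\, \overline{\cdot}\dx$ straight, since it is precisely the complex pairing that lets the same-phase condition annihilate the $\ci u$-component and fix the sign $a \ge 0$, and the real inner product that furnishes the Pythagorean identities.
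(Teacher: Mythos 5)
Your proof is correct and follows essentially the same route as the paper: the $L^2$-orthogonal decomposition \eqref{decom}, elimination of the $\ci u$-component through the phase condition, and the Pythagorean identity from $u_h \in \mathbb{S}$ (your use of the complex pairing to get $b=0$ and $a\ge 0$ in one step is just a compact packaging of the paper's argument, which derives $c_2=0$ from $u_h \in \tangentspace{\ci u}$). The only deviation is the last step, where the paper chains $|c(v)| \le \|v\|^2_{\0}$ via $|\sqrt{1-x}-1|\le x$ and then $\|v\|_{\0} \le \|u_h-u\|_{\0}$ by Cauchy--Schwarz, while your direct computation gives the exact identity $|c(v)| = \tfrac12 \|u_h-u\|^2_{\0}$, which is correct and in fact slightly sharper than the bound stated in \eqref{d3}.
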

\begin{proof}
From \eqref{decom}, there exist $c_1 ,c_2 \in \mathbb{R}$ and $v \in \5$ such that
$$ u_h = c_1  u + c_2 \ci u + v \, .$$
Since $u_h, u, v \in \tangentspace{\ci u}$, we have
\begin{align*}
0 = \re \int_{\D}u_h  \, \overline{\ci u} \dx = c_1 \, \re \int_{\D}  u \, \overline{\ci u } \dx + c_2 \,  \re \int_{\D} \, \ci u \, \overline{\ci u }\dx + \re \int_{\D} v \, \overline{\ci u } \dx 
= c_2 + \re \int_{\D} v \, \overline{\ci u } \dx.
\end{align*}
Since $v \in \5$,  we conclude $\int_{\D} v \, \overline{\ci u } \dx=0$ and hence, $c_2 =0$. Next, consider
$$1=(u_h,u_h)_{\0} = (c_1u+v\, , c_1u+v)_{\0} \overset{v \in \tangentspace{\ci u}} = c_1^2 + \|v\|^2_{\0}.$$
Since $c_1^2 \in \mathbb{R}_{\geq 0}$, we conclude 
$$ 0 \, \leq \, c^2_1 ,\, \|v\|^2_{\0} \, \leq \, 1 \qquad \mbox{and} \qquad c_1 = \sqrt{ 1- \|v\|^2_{\0} }.$$
For the error we obtain $ u_h - u = c_1 u +v - u = c(v) u + v \, ,$ where $c(v) = \sqrt{ 1-\|v\|^2_{\0} }-1 $. Since $ | \sqrt{ 1-x}-1| \leq x$ for all $0 \leq x \leq 1$, we conclude $c(v) \le \| v \|^2_{\0}$. Finally, we use $(u_h-u,v)_{\0} = (c(v) u, v)_{\0} + (v,v)_{\0} = \|v\|^2_{\0}$ to verify
$$ \|v\|^2_{\0}= (u_h-u,v)_{\0} \leq \|u_h-u\|_{\0} \|v\|_{\0} \, .$$
Combing the estimates to $c(v) \le \| v \|^2_{\0} \le  \|u_h-u\|_{\0}^2$ yields \eqref{d3} and completes the proof.
\end{proof}

In order to exploit $H^2$-regularity of solutions to the dual problem \eqref{adjoint1-1} below, we require a corresponding result for the operator  $E''(u)-\lm \mathcal{I}$ on $\5$. This is established by the following lemma.
\begin{lemma}[Regularity of dual solutions]
\label{H2-lemma}
Let assumptions \ref{A1}-\ref{A5} hold. Then for any $\phi \in \0$, there exists a unique $w_{\phi} \in H^2(\D) \cap \5$ such that 
\begin{equation}
\label{adjoint1-1}
 \langle(E''(u)-\lm \mathcal{I})w_{\phi},v \rangle = \langle \mathcal{I} \phi ,v\rangle \qquad \mbox{ for all } v \in \5 \, ,
\end{equation}
and the following estimate holds true:
\begin{equation}
\label{h2regularity-general-1}
\|w_{\phi}\|_{H^{2}(\D)} \lesssim \| \phi \|_{\0}.
\end{equation}
\end{lemma}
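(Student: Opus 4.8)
The plan is to extract existence, uniqueness and the $H^1$-bound directly from the inf-sup stability, and then to bootstrap to $H^2$ by rewriting the \emph{constrained} variational problem \eqref{adjoint1-1} as a genuine Poisson problem with $L^2$ right-hand side, to which the elliptic regularity already used in Lemma \ref{gs-h2-regularity} applies. Since $\phi \in \0 \hookrightarrow \2$ with $\|\phi\|_{\2} \lesssim \|\phi\|_{\0}$, the map $v \mapsto \langle \Ical \phi, v \rangle = (\phi,v)_{\0}$ is a bounded functional on $\5$, so Proposition \ref{prop-inf-sup-stability-Wperpu} (equivalently, boundedness of the inverse of $E''(u)-\lm\Ical$ on $\5$) immediately yields a unique $w_\phi \in \5$ solving \eqref{adjoint1-1}, together with $\|w_\phi\|_{\1} \lesssim \|\phi\|_{\0}$.

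The crucial step is to turn \eqref{adjoint1-1}, which is tested only against $v \in \5$, into an equation on all of $\3$. To this end I consider the residual $F := (E''(u)-\lm\Ical)w_\phi - \Ical\phi \in \2$, which by \eqref{adjoint1-1} annihilates $\5$. Using the $L^2$-orthogonal decomposition \eqref{decom}, every $v \in \3$ splits as $v = (u,v)_{\0}\,u + (\ci u,v)_{\0}\,\ci u + v^\perp$ with $v^\perp \in \5$; since $F(v^\perp)=0$, this identifies $F = \mu_1\,\Ical u + \mu_2\,\Ical(\ci u)$ with the real multipliers $\mu_1 := \langle F, u\rangle$ and $\mu_2 := \langle F, \ci u\rangle$. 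Spelling this identity out with \eqref{sd} and the reassembly of $\Acal_{|u|}$ into strong form used in \eqref{eigen_value_problem_1}, it becomes, in the sense of distributions, the Poisson problem $-\Delta w_\phi = g$ with
\[
g := \phi + (\mu_1 + \ci\mu_2)\,u + \lm w_\phi - V w_\phi + \Omega\,\mathcal{L}_{3} w_\phi - \beta|u|^2 w_\phi - 2\beta\,\re(u\,\overline{w_\phi})\,u .
\]

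It then remains to bound the multipliers and to verify $g \in \0$ with $\|g\|_{\0}\lesssim\|\phi\|_{\0}$. Testing the distributional equation with $u$ and with $\ci u$, and using the symmetry of $E''(u)-\lm\Ical$ together with the identities $\langle(E''(u)-\lm\Ical)u,\cdot\rangle = 2\beta(|u|^2u,\cdot)_{\0}$ and $(E''(u)-\lm\Ical)(\ci u)=0$ (the latter because $\ci u$ is an eigenfunction of $\Acal_{|u|}$ to $\lm$, cf. the discussion after \eqref{sd}), I obtain $\mu_1 = 2\beta(|u|^2u,w_\phi)_{\0} - (\phi,u)_{\0}$ and $\mu_2 = -(\phi,\ci u)_{\0}$. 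Since $\||u|^2u\|_{\0} = \|u\|_{L^6(\D)}^3 \lesssim 1$, the $H^1$-bound on $w_\phi$ gives $|\mu_1|+|\mu_2| \lesssim \|\phi\|_{\0}$. For the remaining terms of $g$ I use $u \in L^\infty(\D)$ (from $u \in C^0(\overline{\D})$, Lemma \ref{gs-h2-regularity}), $V \in L^\infty(\D)$, and the boundedness of $\D$ (so that $\mathcal{L}_{3}w_\phi \in \0$ for $w_\phi \in \3$), which together with $\|w_\phi\|_{\1}\lesssim\|\phi\|_{\0}$ yields $\|g\|_{\0}\lesssim\|\phi\|_{\0}$. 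Finally, convexity of $\D$ and elliptic regularity for $-\Delta w_\phi = g$ (as in Lemma \ref{gs-h2-regularity}, cf. \cite[Theorem 3.2.2]{cia2002}) give $w_\phi \in H^2(\D)\cap\5$ with $\|w_\phi\|_{H^2(\D)}\lesssim\|g\|_{\0}\lesssim\|\phi\|_{\0}$.

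The main obstacle is precisely the passage from the constrained equation to the unconstrained Poisson problem, namely the correct identification of the Lagrange multipliers $\mu_1,\mu_2$ and their quantitative control; once the eigen-relations $\Acal_{|u|}u=\lm u$ and $\Acal_{|u|}(\ci u)=\lm \ci u$ are exploited, the remaining work is routine, hinging only on the $L^\infty$-regularity of the ground state from Lemma \ref{gs-h2-regularity}.
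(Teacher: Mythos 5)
Your proposal is correct and follows essentially the same route as the paper: inf-sup stability from Proposition \ref{prop-inf-sup-stability-Wperpu} for existence, uniqueness and the $H^1$-bound $\|w_\phi\|_{\1}\lesssim\|\phi\|_{\0}$, extension of the constrained equation from $\5$ to all of $H^1_0(\D)$ via the $L^2$-orthogonal decomposition \eqref{decom}, and convex-domain elliptic regularity for the resulting Poisson problem with $L^2$ right-hand side. The only (cosmetic) difference is where the extension is performed: you identify the residual as $\mu_1\,\Ical u+\mu_2\,\Ical(\ci u)$ and compute the multipliers explicitly via the symmetry of $E''(u)-\lm\Ical$ and the eigen-relations $\langle(E''(u)-\lm\Ical)u,\cdot\rangle=2\beta(|u|^2u,\cdot)_{\0}$ and $(E''(u)-\lm\Ical)(\ci u)=0$, whereas the paper decomposes the \emph{test function} and folds the components along $u$ and $\ci u$ into a modified source $\tilde{f_{\phi}}$ bounded directly by Cauchy--Schwarz in the $(\cdot,\cdot)_{\sR}$-inner product, so your explicit formulas $\mu_1=2\beta(|u|^2u,w_\phi)_{\0}-(\phi,u)_{\0}$ and $\mu_2=-(\phi,\ci u)_{\0}$ are a correct byproduct but not needed for the estimate.
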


\begin{proof}
Since $E''(u)-\lambda \mathcal{I}$ fulfills a continuous inf-sup condition \eqref{inf-sup-stability-Wperpu} on $\5$ guaranteed by \ref{A5}, we immediately conclude that \eqref{adjoint1-1} has a unique solution with the stability bound
\begin{align}
\label{regularity-1}
 \|w_{\phi}\|_{\1} \lesssim \|\phi\|_{\0}.
 \end{align}
It remains to prove that $w_{\phi} \in H^2(\D)$ with the regularity estimate \eqref{h2regularity-general-1}. 
For this, we use the expression for the second Fr\'echet derivative given by \eqref{sd} in combination with the definition of the $(\cdot,\cdot)_{\sR}$-inner product in \eqref{R-inner-product}
to rewrite the equation \eqref{adjoint1-1} as
 \begin{align}
 (w_{\phi} , v)_{\sR} & = (f_{\phi},v)_{\0} \, \quad \mbox{for all } v \in \5 \, ,
 \end{align}
where $ f_{\phi}:= \phi + \lambda w_{\phi} - \beta |u|^2 w_{\phi}  - 2\beta \, \re(u \overline{w_{\phi}})u \in \0$. Hence, $\| w_{\phi} \|_{\sR}^2 \le \| f_{\phi} \|_{\0} \| w_{\phi} \|_{\0}$. Since $\| \cdot\|_{\sR}$ is equivalent to the standard $H^1$-norm, we conclude with the definition of $ f_{\phi}$ that
 \begin{align}
 \label{IP-Coercive}
 \| w_{\phi} \|_{\1} \, \lesssim \, \| f_{\phi}\|_{\0} \, \overset{\eqref{regularity-1}}{\lesssim} \,  \|\phi\|_{\0} .
 \end{align} 
Now, let $v \in H^1_0(\D)$. Using the decomposition \eqref{decom}, we can write $v$ uniquely in the form $v = c_1 u + c_2 \ci u + w \, ,$ where $c_1 = (u , v)_{\0} \, , \, c_2 = ( \ci u , v)_{\0}$  and $w \in \5$. This yields
\begin{eqnarray}
\nonumber \lefteqn{ (w_{\phi} , v)_{\sR} \,\, = \,\,
(w_{\phi} ,c_1 u)_{\sR} + (w_{\phi} , c_2 \, \ci u)_{\sR} + (f_{\phi},w)_{\0} } \\
\nonumber & = & \big(w_{\phi} , (u,v)_{\0} u \big)_{\sR} + \big(w_{\phi} , (\ci  u , v)_{\0} \, \ci u \big)_{\sR} + (f_{\phi} ,v )_{\0} \\
\nonumber &   & - \big(f_{\phi} ,(u,v)_{\0} u \big)_{\0} - \big(f_{\phi},(\ci u , v)_{\0} \, \ci u \big)_{\0} \\ 
\nonumber & = &  \big((w_{\phi} ,u)_{\sR} u,v\big)_{\0} + \big((w_{\phi} ,\ci u )_{\sR} \ci u,v \big)_{\0}  + (f_{\phi} ,v )_{\0} \\
\nonumber &   & - \big((f_{\phi},u)_{\0} u , v\big)_{\0} - \big((f_{\phi},\ci u)_{\0} \ci u , v \big)_{\0} \\
 \label{tilde-f} & = & \big( \tilde{f_{\phi}}, v\big)_{\0} \, ,
 \end{eqnarray}
where $ \tilde{f_{\phi}} := (w_{\phi} ,u)_{\sR} u + (w_{\phi} ,\ci u )_{\sR} \ci u + f_{\phi} - (f_{\phi},u)_{\0} u  - (f_{\phi},\ci u)_{\0} \ci u $.  
As $u , f_{\phi} \in \0$, this directly implies $\tilde{f_{\phi}} \in \0$. Furthermore, by the definition of $(\cdot \, , \, \cdot)_{\sR}$ (cf. \eqref{R-inner-product}), we have 
\begin{align}
\nonumber (w_{\phi} , v )_{\sR} & 
= (\nabla w_{\phi}, \nabla v)_{\0} - ( \ci \Omega R^{\top} \nabla w_{\phi},  v)_{\0} + (V w_{\phi} , v)_{\0} \\
\nonumber & = (\nabla w_{\phi}, \nabla v)_{\0} + (g_{\phi} , v)_{\0} \, \qquad \mbox{for } g_{\phi}:=V w_{\phi} -  \ci \Omega R^{\top} \nabla w_{\phi}.
\end{align}
 As $\tilde{f_{\phi}}, g_{\phi} \in \0$ and $v \in H^1_0(\D)$ we conclude that $w_{\phi}  \in \5 \subset H^1_0(\D)$ solves the Poisson equation $-\Delta w_{\phi}= \tilde{f_{\phi}} - g_{\phi}$ on the convex domain $\D$. Hence, standard elliptic regularity theory yields $w_{\phi} \in H^2(\D)$ and $\|w_{\phi}\|_{H^2(\D)} \lesssim \|\tilde{f_{\phi}}\|_{\0} + \|g_{\phi}\|_{\0}$. A direct estimation of these norms together with \eqref{IP-Coercive} gives $ \|\tilde{f_{\phi}}\|_{\0} + \|g_{\phi}\|_{\0} \lesssim  \|\phi\|_{\0}$. This establishes \eqref{h2regularity-general-1} and finishes the proof.
 \end{proof}
Before we can turn towards the proof of the $L^2$-error estimate, we require a final lemma.
\begin{lemma}
\label{fourth} Assume \ref{A1}-\ref{A5}. Let $u \in \mathcal{U}$, $u_h \in \mathcal{U}_h$ and $\delta \in [0,1]$. Then, for any $v \in H_{0}^{1}(\D)$ the following holds
\begin{eqnarray*}
\nonumber \lefteqn{|\langle E''(u+\delta(u_h-u))\, u_h-u\, , v \rangle - \langle E''(u)\, u_h-u \, ,v \rangle| }\\
 & \lesssim & \Big(\|u_h -u\|^2_{\1} + \|u_h -u\|_{\1} \Big) \|u_h -u\|_{\1} \|v\|_{\1}.
\end{eqnarray*}
\end{lemma}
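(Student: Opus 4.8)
The plan is to exploit the explicit formula \eqref{sd} for $E''$, which shows that the only nonlinear (i.e. $u$-dependent) contribution to $E''(w)$ comes from the term $\langle \Acal_{|w|}\cdot,\cdot\rangle$ through its $\beta|w|^2$-part, together with the rank-one term $2\beta(\re(w\overline{v})w,\cdot)_{\0}$. Since the covariant-gradient and $\VR$-parts of $\Acal_{|w|}$ do not depend on $w$, they cancel exactly in the difference $E''(u+\delta(u_h-u))-E''(u)$. First I would write $z:=u+\delta(u_h-u)$ and $e:=u_h-u$ and record, using \eqref{sd} and \eqref{defAcal-u-square}, that
\begin{align*}
\langle(E''(z)-E''(u))e,v\rangle
= \beta\Big(\big(|z|^2-|u|^2\big)e,\,v\Big)_{\0}
+ 2\beta\Big(\big(\re(z\overline e)\,z-\re(u\overline e)\,u\big),\,v\Big)_{\0}.
\end{align*}
Every remaining term is therefore a genuinely cubic (in the relevant functions) $L^2$-pairing, so the whole estimate reduces to bounding such pairings.

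Next I would estimate each of the two groups by Hölder's inequality in the form $\||fgh|\|_{L^1}\le \|f\|_{L^4}\|g\|_{L^4}\|h\|_{L^2}$ (or $\|f\|_{L^6}\|g\|_{L^6}\|h\|_{L^6}$), together with the Sobolev embedding $H^1_0(\D)\hookrightarrow L^4(\D)$ (and $L^6(\D)$) valid for $d\le 3$. For the first group I would write $|z|^2-|u|^2=\re\big((z-u)\overline{(z+u)}\big)=\delta\,\re(e\,\overline{z+u})$, so that
\begin{align*}
\Big|\big((|z|^2-|u|^2)e,v\big)_{\0}\Big|
\;\lesssim\; \delta\,\|e\|_{L^4}\,\|z+u\|_{L^4}\,\|e\|_{L^4}\,\|v\|_{L^\infty}?
\end{align*}
—here I would instead keep $v$ in $L^4$ and one factor in a higher exponent, using $\|e\,v\|_{L^2}\le\|e\|_{L^4}\|v\|_{L^4}$ and $\|z+u\|_{L^4}\lesssim 1$ by the uniform $H^1$-bound on $u,u_h$ (and hence on $z$ for $\delta\in[0,1]$). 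This produces a bound of the shape $\lesssim \delta\,\|e\|_{\1}^2\,\|v\|_{\1}$. The second group is handled identically after writing $\re(z\overline e)z-\re(u\overline e)u=\re((z-u)\overline e)\,z+\re(u\overline e)(z-u)=\delta\,\re(e\overline e)\,z+\delta\,\re(u\overline e)\,e$, each summand again being a product of three functions with at most two factors carrying an $e$; the uniform $H^1$-bounds absorb the $z$- and $u$-factors.

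Collecting, I would obtain $|\langle(E''(z)-E''(u))e,v\rangle|\lesssim \big(\|e\|_{\1}^2+\|e\|_{\1}\big)\|e\|_{\1}\|v\|_{\1}$, where the quadratic term $\|e\|_{\1}^2$ arises precisely from those summands containing two copies of $e$ (the purely cubic-in-$e$ contributions), while the single $\|e\|_{\1}$ comes from the mixed summands that carry one copy of $e$ alongside a $u$- or $z$-factor; the factor $\delta\in[0,1]$ is simply bounded by $1$. The main obstacle is purely bookkeeping: correctly expanding the two difference groups so that every resulting term manifestly carries at least one factor of $e=u_h-u$ and then distributing the $L^4$/$L^6$ norms so that the uniformly bounded functions ($u$, $u_h$, $z$) are always placed in the integrable slots, ensuring no hidden dependence on $h$ slips through. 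No single step is analytically deep; the value lies in organizing the cancellation from \eqref{sd} cleanly and in tracking which terms are quadratic versus linear in $\|e\|_{\1}$ so that the stated mixed bound $(\|e\|^2+\|e\|)\|e\|\|v\|$ emerges.
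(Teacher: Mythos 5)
Your proposal is correct and follows essentially the same route as the paper's proof: both expand the difference $E''(u+\delta e)-E''(u)$ applied to $e=u_h-u$ into cubic $L^2$-pairings in which every summand carries at least one extra factor of $e$, and then conclude via H\"older's inequality, the Sobolev embeddings $H^1_0(\D)\hookrightarrow L^p(\D)$ for $p\le 6$ ($d\le 3$), and the uniform boundedness of $u$ and $u_h$. The only cosmetic differences are your factorized algebra (writing $|z|^2-|u|^2=\delta\,\re\big(e\,\overline{z+u}\big)$ and keeping $z=u+\delta e$ intact rather than fully expanding in powers of $\delta$) and your all-$L^4$/$L^6$ H\"older splitting in place of the paper's use of $\|u\|_{L^{\infty}(\D)}$ with $L^3$/$L^6$ pairs, which in fact yields the slightly stronger bound $\lesssim \|u_h-u\|_{H^1(\D)}^2\|v\|_{H^1(\D)}$ that trivially implies the stated estimate.
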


\begin{proof} By using the definition of the second Fr\'echet derivative, we can express 
\begin{eqnarray}
\nonumber \lefteqn{ \langle E''(u+\delta(u_h-u))\, u_h-u \, , v \rangle - \langle E''(u)\, u_h-u \, , v\rangle}\\
\nonumber &=& \Big( ( |u+\delta(u_h-u)|^2 -|u|^2)(u_h-u) , v\Big)_{\0} - \Big( \re \big( u(\overline{u_h-u})\big)u , v \Big)_{\0}\\
\label{e3.1}& &  + \Big( \re\big( (u+\delta(u_h-u))(\overline{u_h-u})\big) (u+\delta(u_h-u)) , v \Big)_{\0}. 
\end{eqnarray}
We can now estimate the right-hand side of\eqref{e3.1} by using H\"older's inequality and the Sobolev embedding  $H_{0}^{1}(\D)$ $ \hookrightarrow$ $L^{3}(\D),L^{6}(\D)$ for $(d \leq 3)$ in the following way. To do so, we consider the first term of \eqref{e3.1} and obtain
\begin{eqnarray}
\nonumber \lefteqn{\Big( ( |u+\delta(u_h-u)|^2 -|u|^2)(u_h-u) , v \Big)_{\0} }\\
\nonumber &=&  \delta^2 \Big(|u_h-u|^2(u_h-u) , v \Big)_{\0} +   \delta \Big( \big(u (\overline{u_h-u}) + \overline{u} (u_h-u) \big) \big(u_h-u \big), v \Big)_{\0} \\
\nonumber & \lesssim & \Big(\||u_h-u|^2\|_{L^3(\D)} \|u_h-u\|_{L^6(\D)}  + \|u\|_{L^{\infty}(\D)} \|u_h-u\|_{L^3(\D)} \|u_h-u\|_{L^6(\D)} \Big) \| v\|_{\0} \\
\nonumber & \lesssim & \Big( \|u_h-u\|^2_{L^6(\D)}  + \|u_h-u\|_{L^3(\D)} \Big) \|u_h-u\|_{L^6(\D)} \| v\|_{\0} \\
\nonumber & \lesssim & \Big( \|u_h-u\|^2_{\1}  + \|u_h-u\|_{\1} \Big) \|u_h-u\|_{\1} \| v\|_{\0} \, .
\end{eqnarray}
We can now proceed analogously to bound the remaining terms in \eqref{e3.1} as
\begin{eqnarray}
\nonumber \lefteqn{\Big( \re\big( (u+\delta(u_h-u))(\overline{u_h-u})\big) (u+\delta(u_h-u)) , v \Big)_{\0}- \Big( \re \big( u(\overline{u_h-u})\big)u , v \Big)_{\0} }\\
\nonumber &=& \Big( \re\big( u(\overline{u_h-u})\big) \delta(u_h-u) , v \Big)_{\0} + \Big( \re\big( \delta |u_h-u|^2\big) u , v \Big)_{\0} \\
\nonumber & & + \Big( \re\big( \delta |u_h-u|^2\big) \delta(u_h-u) , v \Big)_{\0}\\
\nonumber & \lesssim & \Big( \|u\|_{L^{\infty}(\D)} \|u_h-u\|_{L^3(\D)} \|u_h-u\|_{L^6(\D)} + \| \, |u_h-u|^2\|_{L^3(\D)} \|u_h-u\|_{L^6(\D)} \Big) \| v\|_{\0}\\
\nonumber  & \lesssim & \Big(\|u_h -u\|^2_{\1} + \|u_h -u\|_{\1} \Big) \|u_h -u\|_{\1} \|v\|_{\1} \,.
\end{eqnarray}
This completes the proof.
\end{proof}
\begin{proof}[Proof of estimate \eqref{2.2} in Theorem \ref{approx_theorem}]
Noting that the rest of Theorem \ref{approx_theorem} is already proved, we assume that $h$ is sufficiently small and we let $u \in \mathcal{U}$ and $u_h \in \mathcal{U}_h \cap \4$ denote a ground state pair such that $u_h$ is a $H^1$-quasi-best approximation of $u$ in the sense of \eqref{2.1}. Using Lemma \ref{error_decomp} there exists $v \in \5$ and $c(v) \in \mathbb{R}$ such that
\begin{align}
\label{error-decomp-1}
 u_h - u = c(v)\, u + v \, , \quad \mbox{ where } \quad |c(v)| \leq \|u_h-u\|^2_{\0}.
 \end{align}
By Lemma \ref{H2-lemma} there exists a unique solution $w_{u_h-u} \in \5 \cap H^2(\D)$ such that 
\begin{equation}
\label{adjoint1}
 \langle(E''(u)-\lm \mathcal{I})w_{u_h-u},v \rangle = \langle \mathcal{I}(u_h-u),v\rangle \qquad \mbox{ for all } v \in \5
\end{equation}
and
\begin{equation}
\label{regularity}
\|w_{u_h-u}\|_{H^{2}(\D)} \lesssim \|u_h-u\|_{\0}.
\end{equation}
We obtain
\begin{eqnarray}
\label{step1-L2-est} \lefteqn{\|u_h-u\|^2_{\0} 
 \,\,\,\overset{\eqref{error-decomp-1}}{=}\,\,\, (u_h-u,c(v)\, u)_{\0} + (u_h-u,v)_{\0} } \\
\nonumber &\overset{\eqref{adjoint1}}{=}& (u_h-u,c(v)\, u)_{\0} + \langle(E''(u)-\lm \mathcal{I})w_{u_h-u},v \rangle \\
\nonumber &\overset{\eqref{error-decomp-1}}{=}& (u_h-u,c(v)\, u)_{\0} - \langle(E''(u)-\lm \mathcal{I})w_{u_h-u},c(v) u \rangle + \langle(E''(u)-\lm \mathcal{I})w_{u_h-u},u_h-u \rangle \\
\nonumber &=& (u_h-u,c(v)\, u)_{\0} - \langle(E''(u)-\lm \mathcal{I})w_{u_h-u},c(v) u \rangle  \\
\nonumber & &  + \langle(E''(u)-\lm \mathcal{I}) (w_{u_h-u} - P_{h}w_{u_h-u}) ,u_h-u \rangle + \langle(E''(u)-\lm \mathcal{I})P_{h}w_{u_h-u},u_h-u \rangle.
\end{eqnarray}
Next, we note that from the continuous and discrete eigenvalue problem, we have
\begin{eqnarray}
\label{step2-L2-est}
\lefteqn{ \langle(E''(u)-\lm \mathcal{I}) P_{h}w_{u_h-u},u_h-u \rangle} \\ 
\nonumber&=& \langle E''(u)\, P_{h}w_{u_h-u},u_h-u \rangle - \langle E'(u_h), P_{h}w_{u_h-u} \rangle + \langle E'(u), P_{h}w_{u_h-u} \rangle +  (\lambda_h -\lambda) \langle \Ical u_h \, , P_{h}w_{u_h-u} \rangle \,.
\end{eqnarray}
Here, the last term in \eqref{step2-L2-est} can be expressed, using $w_{u_h-u} \in \5 $, as
$$
 \langle \Ical u_h \, , P_{h}w_{u_h-u} \rangle  = \langle \Ical (u_h - u) \,,  w_{u_h-u} \rangle + \langle \Ical u_h \, , P_{h}w_{u_h-u} - w_{u_h-u} \rangle,
 $$
and the middle two terms in \eqref{step2-L2-est} can be written using Taylor expansion as
\begin{align*}
\langle E'(u_h), P_{h}w_{u_h-u} \rangle - \langle E'(u), P_{h}w_{u_h-u} \rangle = \langle E''(u+\delta(u_h-u))\, P_{h}w_{u_h-u},u_h-u \rangle
\end{align*}
for some $\delta \in [0,1]$. Inserting the last two identities into \eqref{step2-L2-est} and afterwards the result into \eqref{step1-L2-est} yields
\begin{eqnarray}
\nonumber \lefteqn{ \|u_h-u\|^2_{\0} \,\,\,=\,\,\, 
(u_h-u,c(v)\, u)_{\0} - \langle(E''(u)-\lm \mathcal{I})w_{u_h-u},c(v) u \rangle }\\
\nonumber & & + \langle(E''(u)-\lm \mathcal{I}) (w_{u_h-u} - P_{h}w_{u_h-u}) ,u_h-u \rangle \\
\nonumber & & - \Big( \langle E''(u+\delta(u_h-u))\, P_{h}w_{u_h-u},u_h-u \rangle - \langle E''(u)\, P_{h}w_{u_h-u},u_h-u \rangle \Big) \\
\label{estimate} & & +  (\lambda_h -\lambda) \Big(\langle \Ical (u_h - u) \, , w_{u_h-u} \rangle + \langle \Ical u_h \, , P_{h}w_{u_h-u} - w_{u_h-u} \rangle \Big).
\end{eqnarray}
Next, we will estimate the terms appearing on the right hand side of the above equality \eqref{estimate} separately. The first term is bounded with estimate \eqref{d3} as
\begin{align}
\label{e1}
 (u_h-u,c(v)\, u)_{\0} \lesssim \|u_h-u\|_{\0}^3 .
\end{align} 
To estimate the second term in \eqref{estimate}, we first use the continuity property of bilinear form $\langle(E''(u)-\lm \mathcal{I})\, \cdot \,, \, \cdot \, \rangle$ and then the stability estimate $\|w_{u_h-u}\|_{\1} \lesssim \|u_h -u \|_{\0}$ to obtain
\begin{align}
\label{e2}
\langle(E''(u)-\lm \mathcal{I})w_{u_h-u},c(v) u \rangle \lesssim \|w_{u_h-u}\|_{\1} \|c(v) u \|_{\1} \overset{\eqref{d3}}\lesssim \|u_h -u \|^{3}_{\0}\, .
\end{align}
For the third term in \eqref{estimate} we exploit  $w_{u_h-u} \in H^2(\D)$ and the corresponding regularity estimate \eqref{regularity} to conclude
\begin{eqnarray}
\nonumber \lefteqn{\langle(E''(u)-\lm \mathcal{I}) (w_{u_h-u} - P_{h}w_{u_h-u}) ,u_h-u \rangle \,\,\,
 \lesssim \,\,\, \|w_{u_h-u} - P_{h}w_{u_h-u} \|_{\1} \|u_h-u\|_{\1} } \\
\nonumber & \overset{\eqref{P_property2}}{\lesssim} & h \|w_{u_h-u}\|_{H^2(\D)} \|u_h-u\|_{\1} \\
\label{e3} &\overset{\eqref{regularity}}{\lesssim} & h \|u_h-u\|_{\0} \|u_h-u\|_{\1} \, .\hspace{250pt}
\end{eqnarray}
By using Lemma \ref{fourth} with $v=P_h w_{u_h-u}$, we can estimate the fourth term in \eqref{estimate} as
\begin{eqnarray}
\nonumber \lefteqn{|\langle E''(u+\delta(u_h-u))\, u_h-u\, , v \rangle - \langle E''(u)\, u_h-u \, , P_h w_{u_h-u} \rangle| }\\
\nonumber & \lesssim & \Big(\|u_h -u\|^2_{\1} + \|u_h -u\|_{\1} \Big) \|u_h -u\|_{\1} \| P_h w_{u_h-u} \|_{\1} \\
\label{e4}  & \lesssim & \Big(\|u_h -u\|^2_{\1} + \|u_h -u\|_{\1} \Big) \|u_h -u\|_{\1} \| u_h-u \|_{\0} \, ,
\end{eqnarray}
where the last step follows from the $H^1$-stability of $P_h$. Finally, the last term in \eqref{estimate} is again estimated using the approximation property of projection $P_h$ together with \eqref{regularity}:
\begin{eqnarray}
\nonumber \lefteqn{\Big(\langle \Ical (u_h - u) \, , w_{u_h-u} \rangle + \langle \Ical u_h \, , P_{h}w_{u_h-u} - w_{u_h-u} \rangle \Big)  }\\
\label{e5} & \overset{\eqref{regularity}}{\lesssim} & \|u_h-u\|_{\1} \|u_h-u\|_{\0} + h \|u_h-u\|_{\0} \, .
\end{eqnarray} 
Substituting the estimates \eqref{e1}--\eqref{e5} into \eqref{estimate} and using the (suboptimal) eigenvalue estimate \eqref{l-lh1}, i.e., $|\lambda_h - \lambda|  \lesssim  \|u_h-u\|_{H^1(\D)}^{2} +  \|u_h-u\|_{L^2(\D)}$, we get 
\begin{eqnarray}
\label{step3-L2-est} \lefteqn{\|u_h-u\|^2_{\0} \,\,\, \lesssim \,\,\, \|u_h-u\|_{\0}^3 + h \|u_h-u\|_{\0} \|u_h-u\|_{\1} } \\
\nonumber& & \hspace{10pt} + \Big(\|u_h -u\|^2_{\1} + \|u_h -u\|_{\1} \Big) \|u_h -u\|_{\1} \|u_h -u\|_{\0} \\
\nonumber& & \hspace{10pt} + \Big( \|u_h-u\|_{H^1(\D)}^{2} +  \|u_h-u\|_{L^2(\D)} \Big) \Big( \|u_h-u\|_{\1} \|u_h-u\|_{\0} + h \|u_h-u\|_{\0} \Big).
\end{eqnarray}
Finally, we can use the quasi-best-approximation property \eqref{2.1} together with the $H^2$-regularity from Lemma \ref{gs-h2-regularity} to conclude $\|u_h-u\|_{\1} \lesssim h\,|u|_{H^2(\D)}$. Plugging this into \eqref{step3-L2-est} and dividing by $\|u_h-u\|_{\0}$ finally yields
\begin{align*}
\|u_h-u\|_{\0} \lesssim h \|u_h-u\|_{\1}.
\end{align*}
\end{proof}

 \subsection{Proof of optimal convergence rates for the eigenvalue}  
  \label{proof-of-order-theorem}
Theorem \ref{approx_theorem} guarantees that for each discrete ground state pair $(\lm_{h,k} , u_{h,k}) \in \mathbb{R}_{>0} \times \mathcal{U}_{h,k}$ to \eqref{discrete_eigen_value_problem} (on a sufficiently fine mesh), there is a unique quasi-isolated ground state pair $(\lm ,u) \in \mathbb{R}_{>0} \times \mathcal{U}$ to \eqref{eigen_value_problem_2} such that $u_{h,k} \in \tangentspace{\ci u}$, as well as
\begin{align*}
\|u-u_{h,k}\|_{H^{1}(\D)} \lesssim \inf_{v_h \in V_{h,k}}\|u-v_h\|_{H^{1}(\D)}   \qquad \mbox{and} \qquad \|u- u_{h,k} \|_{L^{2}(\D)} \lesssim h \|u- u_{h,k}\|_{H^{1}(\D)} .
\end{align*}
If the ground state $u$ has higher order regularity, i.e., $u \in H^{k+1}(\mathcal{D})$, then we directly get the following optimal order estimates for the $L^2$- and $H^1$-error (cf. \cite[Theorem 3.2.2]{cia2002}):
\begin{align}
\label{id1}
\|u-u_{h,k}\|_{H^1(\D)} \lesssim  h^{k} |u|_{H^{k+1}(\D)} \hspace{5mm} \text{and} \hspace{5mm} \|u-u_{h,k}\|_{L^2(\D)} \lesssim h^{k+1}|u|_{H^{k+1}(\D)}.
\end{align}
On top of that, we obtain from \eqref{estimate-energy} an optimal order estimate for the energy with 
\begin{align}
\label{id2}
\left| E(u) - E(u_{h,k}) \right| \lesssim \| u_{h,k}  - u\|_{\1}^2 \lesssim h^{2k} |u|_{H^{k+1}(\D)}^2.
\end{align}
A sharp estimate for the eigenvalue is less straightforward. Here we recall from \eqref{l-lh1-new} in Lemma \ref{lambdah-lambda}  that  
\begin{align}
\label{id3}
|\lambda - \lambda_{h,k}|  \lesssim  \| u - u_{h,k} \|_{H^1(\D)}^{2} +  ( u_{h,k} - u ,  u  |u|^2 )_{\0},
\end{align}
which we could crudely estimate (cf. \eqref{l-lh1}) by
\begin{equation}
\label{suboptimal-estimate}
|\lambda - \lambda_{h,k}|  \lesssim  \| u - u_{h,k} \|_{H^1(\D)}^{2} + \| u - u_{h,k} \|_{L^2(\D)}
\lesssim h^{2k} |u|_{H^{k+1}(\D)}^2 + h^{k+1} |u|_{H^{k+1}(\D)}.
 \end{equation}
However, as the optimal rate claimed in Theorem \ref{order_theorem} is of order $\mathcal{O}(h^{2k})$, the above estimate for the ground state eigenvalue is only optimal for $k=1$, i.e., for  $\mathbb{P}^1$-FE approximations. In order to improve the result for arbitrary polynomial orders, we require additional regularity for the solution to a suitable dual problem with source term $u |u|^2$. In the following, we will elaborate the argument and prove \eqref{pk_3} which then concludes Theorem \ref{order_theorem}.
\begin{proof}[Proof of Theorem \ref{order_theorem}]
As described above, it only remains to check the eigenvalue estimate \eqref{pk_3}. We start from \eqref{id3} to observe that we only need to prove that $( u_{h,k} - u ,  u  |u|^2 )_{\0} \lesssim h^{2k}$ to obtain the desired estimate of optimal order for $|\lambda - \lambda_{h,k}|$. Following a similar strategy as for the $L^2$-estimate in  Theorem \ref{approx_theorem}, we consider the unique solution $w_{u|u|^2} \in \5$ to the dual problem
\begin{equation}
\label{adjoint2.1-u3}
 \langle(E''(u)-\lm \mathcal{I}) w_{u|u|^2} ,\varphi \rangle = \langle \mathcal{I} (u  |u|^2)  ,\varphi \rangle \qquad \mbox{ for all } \varphi \in \5.
 \end{equation}
 Recall here our regularity assumption that $w_{u|u|^2} \in H^{k+1}(\D) \cap H^1_0(\D)$ with $\|w_{u|u|^2}\|_{H^{k+1}(\D)} \lesssim \| \, |u|^3 \|_{H^{k-1}(\D)}$. As in our previous estimates, it holds
\begin{eqnarray}
\nonumber \lefteqn{( u_{h,k} - u  ,  u|u|^2 )_{\0}  \,\,\, = \,\,\, (u|u|^2,c(v)\, u)_{\0} - \langle(E''(u)-\lm \mathcal{I})w_{u|u|^2},c(v) u \rangle }\\
\nonumber & & + \,\,\,\langle(E''(u)-\lm \mathcal{I}) (w_{u|u|^2} - P_{h}w_{u|u|^2}) ,u_{h,k}-u \rangle \\
\nonumber & & +\,\,\, \langle E''(u)\, P_{h}w_{u|u|^2},u_{h,k}-u \rangle - \langle E''(u+\delta(u_{h,k}-u))\, P_{h}w_{u|u|^2},u_{h,k}-u \rangle \\
\label{analyze1.1-new} & & + \,\,\,  (\lambda_h -\lambda) \Big(\langle \Ical (u_{h,k} - u) \, , w_{u|u|^2} \rangle + \langle \Ical u_{h,k} \, , P_{h}w_{u|u|^2} - w_{u|u|^2} \rangle \Big).
\end{eqnarray}
 We can now proceed analogously as in the proof of Theorem \ref{approx_theorem} to estimate the terms in \eqref{analyze1.1-new}. For the first term we apply Lemma \ref{error_decomp} to obtain
\begin{align*}
 (u|u|^2,c(v)\, u)_{\0} \lesssim \|u_{h,k}-u\|_{\0}^2 \overset{\eqref{id1}}{\lesssim} h^{2k+2}  
\end{align*} 
In the same way, the second term in \eqref{analyze1.1-new}
is straightforwardly bounded by
\begin{align*}
 \langle(E''(u)-\lm \mathcal{I})w_{u|u|^2},c(v) u \rangle  \overset{\eqref{h2regularity-general-1}}{\lesssim} \| \, |u|^3 \|_{\0} \|u_{h,k}-u\|^2_{\0} \|u\|_{\1}  \lesssim  h^{2k+2} .
\end{align*}
The approximation properties \eqref{best-approx-Ph} of $P_h$ yield for the third term in \eqref{analyze1.1-new} that
\begin{align*}
 \langle(E''(u)-\lm \mathcal{I}) (w_{u|u|^2} - P_{h}w_{u|u|^2}) ,u_{h,k}-u \rangle \,\, \lesssim \,\, h^{k} \|w_{u|u|^2}\|_{H^{k+1}(\D)} \|u_{h,k}-u\|_{\1} \overset{\eqref{id1},\eqref{adjoint2.2-u3}}{\lesssim}  h^{2k} \, .
\end{align*}
For term four we get with Lemma \ref{fourth}
\begin{eqnarray}
\nonumber \lefteqn{|\langle E''(u+\delta(u_{h,k}-u))\, (u_{h,k}-u)\, , P_h w_{u|u|^2} \rangle - \langle E''(u)\, (u_{h,k}-u) \, , P_h w_{u|u|^2} \rangle| }\\
 \nonumber & \lesssim & \Big(\|u_{h,k} -u\|^2_{\1} + \|u_{h,k} -u\|_{\1} \Big) \|u_{h,k} -u\|_{\1} \| u|u|^2 \|_{L^{2}(\D)} \,\,\lesssim \,\, h^{2k} \, .
\end{eqnarray}
For the last term in \eqref{analyze1.1-new} we obtain 
\begin{eqnarray}
\nonumber \lefteqn{(\lambda_h -\lambda) \Big(\langle \Ical (u_{h,k} - u) \, , w_{u|u|^2} \rangle + \langle \Ical u_{h,k} \, , P_{h}w_{u|u|^2} - w_{u|u|^2} \rangle \Big)  }\\
\nonumber  & \lesssim  & |\lambda_h -\lambda|  \, \|u_{h,k}-u\|_{\0} \|w_{u|u|^2} \|_{L^{2}(\D)} + h^{k} \|w_{u|u|^2}\|_{H^{k+1}(\D)} 
\,\,\, \overset{\eqref{suboptimal-estimate}}{\lesssim} \,\,\, h^{2k+1} \, .
\end{eqnarray} 
Combining everything yields $ |\lambda_{h,k} - \lambda | \lesssim h^{2k}$ and finishes the proof.
\end{proof}

\medskip
$\\$
{\bf Acknowledgements.}
The authors would like to thank the anonymous referees for their very insightful comments that greatly improved the contents of this paper.


\def\cprime{$'$}

\appendix \section{Appendix: inf-sup stability of $E''(u) -\lm \mathcal{I}$}
\label{appendix-inf-sup-stability}

In the following we prove the inf-sup stability of the bilinear form $E''(u) -\lm \mathcal{I}$ on $\5$ and $ V_h \cap \5 $. The proofs follow standard arguments in this context. We start with  Proposition \ref{prop-inf-sup-stability-Wperpu} which states that quasi-isolation in the sense of \ref{A5} implies inf-sup stability on $\5$.
\begin{proof}[Proof of Proposition \ref{prop-inf-sup-stability-Wperpu}]
Fix a ground state pair $(\lm , u) \in \R_{>0} \times H_{0}^{1}(\D)$ of \eqref{eigen_value_problem_1} and let $\gamma: (-\varepsilon,\varepsilon) \rightarrow \mathbb{S}$ be a smooth curve in the sense of Definition \ref{definition-quasi-isolated-ground-state} on the $L^{2}$-unit sphere $\mathbb{S}$ with $\gamma(0)=u \in \mathcal{U}$ and $\gamma^{\prime}(0)=w \in (\tangentspace{u} \cap \tangentspace{\ci u}) \setminus \{ 0 \} = 
\mathscr{H}_{u }^{\mathbb{C} , \perp} \setminus \{ 0 \} $. We define a Lagrange function  for the constrained minimization problem \eqref{definition-groundstate} as $L(v):= E(v) - \lm M(v)$ for $v \in \3$. A simple calculation gives $$ \frac{ \partial ^{2} }{\partial t^{2}} E(\gamma(t)) |_{t=0} = \langle L''(u) w , w \rangle = \langle (E''(u) - \lm \mathcal{I})w,w \rangle.$$ 
Now since $u$ is a quasi-isolated ground state of the energy functional $E$, we have
 \begin{align}
 \label{requirement1}
  \langle (E''(u) - \lm \mathcal{I})w,w \rangle = \frac{ \partial ^{2} }{\partial t^{2}} E(\gamma(t)) |_{t=0} > 0 \qquad \mbox{ for all } w \in \5 \setminus \{ 0\}. 
  \end{align}
Hence $0$ is not a eigenvalue of  $E''(u) - \lm \mathcal{I}$ on $\5$. Moreover, for any $w \in \5$, we have 
\begin{eqnarray}
\nonumber \lefteqn{\langle (E''(u) - \lm \mathcal{I})w,w \rangle \,\,\, = \,\,\,  \langle (\mathcal{A}_{|u|}-\lm \mathcal{I})w,w \rangle + 2\beta \int_{\D} (\re(u\overline{w}))^{2} \dx } \\
\label{requirement2} &\geq &  \alpha \|w\|^{2}_{\1} - \lm \|w\|^2_{\0} \, .\hspace{200pt} 
\end{eqnarray}
Thus the injectivity implied by \eqref{requirement1} together with the G{\aa}rding inequality \eqref{requirement2} yield the existence of a constant $\alpha_1 >0$ such that
\begin{align}
 \label{inf-sup-in-appendix}
\sup_{v \in \5} \frac{\langle (E''(u) - \lm \mathcal{I})v,w \rangle }{\|v\|_{\1}} \geq \alpha_1 \|w\|_{\1} \qquad \mbox{for all }  w \in \5 \, .
\end{align}
\end{proof}

%
With the inf-sup stability on $\5$, we can conclude the inf-sup stability of $E''(u)-\lambda \mathcal{I}$ on $ V_h \cap \5 $ for sufficiently small $h$.

\begin{proposition}[Discrete inf-sup stability]
\label{discrete-inf-sup-stability}
Assume \ref{A1}-\ref{A5}. Let $u \in \mathcal{U}$ denote a ground state in the sense of \eqref{definition-groundstate} and let $\lambda \in \R_{>0}$ denote the corresponding ground state eigenvalue given by \eqref{eigen_value_problem_2}. If $h$ is sufficiently small, then there is a constant $\alpha_2 >0$ independent of $h$, such that the discrete inf-sup condition is valid, namely
$$ \inf_{w \in  V_h \cap \5 } \sup_{v\in  V_h \cap \5  }\frac{|\langle(E''(u)-\lambda \mathcal{I})v,w\rangle|}{\|v\|_{\1}\hspace{2mm}\|w\|_{\1}} \hspace{2mm} \geqslant \hspace{2mm} \alpha_{2}.$$
\end{proposition}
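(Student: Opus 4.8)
The plan is to deploy the classical Schatz argument, which lifts the continuous inf-sup stability of Proposition~\ref{prop-inf-sup-stability-Wperpu} to the discrete space $V_h \cap \5$ on sufficiently fine meshes. The two structural ingredients are the symmetry and a Gårding inequality for the bilinear form $a(v,w) := \langle (E''(u)-\lambda\mathcal{I})v, w\rangle$ on $\5$, together with the $H^2$-regularity of the associated dual problem. Symmetry of $a$ is immediate from the representation \eqref{sd} (both $\Acal_{|u|}$ and the term $2\beta(\re(u\overline{v})u,w)_{\0}$ are symmetric in $v$ and $w$), and combining \eqref{sd} with the ellipticity \eqref{def-alpha} yields the Gårding inequality
$$ a(w,w) \geq \alpha\|w\|_{\1}^2 - \lambda\|w\|_{\0}^2 \qquad \mbox{for all } w \in \5, $$
which is precisely \eqref{requirement2}.

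I would fix an arbitrary $w_h \in V_h \cap \5$ and set $S := \sup_{v_h \in V_h \cap \5} |a(v_h,w_h)|/\|v_h\|_{\1}$, the quantity whose lower bound $\|w_h\|_{\1} \lesssim S$ (with $h$-independent constant) is equivalent to the claim. The heart of the argument is to dominate the weak norm $\|w_h\|_{\0}$ by duality. Applying Lemma~\ref{H2-lemma} with $\phi = w_h \in \0$ furnishes a unique dual solution $z \in H^2(\D) \cap \5$ with $a(v,z) = (w_h,v)_{\0}$ for all $v \in \5$ and $\|z\|_{H^2(\D)} \lesssim \|w_h\|_{\0}$; testing with $v = w_h$ and using symmetry gives $\|w_h\|_{\0}^2 = a(w_h,z)$. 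Splitting $z = P_h z + (z - P_h z)$ with the projection $P_h : \5 \to V_h \cap \5$ from \eqref{definition-Ph}, the term $a(w_h, P_h z)$ is bounded by $S\,\|P_h z\|_{\1} \lesssim S\,\|w_h\|_{\0}$ (using $H^1$-stability of $P_h$ and the regularity bound), while $a(w_h, z - P_h z)$ is bounded by $\|w_h\|_{\1}\,\|z - P_h z\|_{\1} \lesssim h\,\|w_h\|_{\1}\,\|w_h\|_{\0}$ via the approximation estimate \eqref{P_property2}. Dividing by $\|w_h\|_{\0}$ yields
$$ \|w_h\|_{\0} \lesssim S + h\,\|w_h\|_{\1}. $$

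Finally, I would close the estimate with the Gårding inequality. Since $a(w_h,w_h) \leq S\,\|w_h\|_{\1}$, it gives $\alpha\|w_h\|_{\1}^2 \leq S\,\|w_h\|_{\1} + \lambda\|w_h\|_{\0}^2$; inserting the squared dual estimate $\|w_h\|_{\0}^2 \lesssim S^2 + h^2\|w_h\|_{\1}^2$ and using continuity $S \lesssim \|w_h\|_{\1}$ to absorb the $S^2$-contribution leads to $\alpha\|w_h\|_{\1}^2 \lesssim S\,\|w_h\|_{\1} + h^2\|w_h\|_{\1}^2$. Choosing $h$ small enough to absorb the $h^2$-term into the left-hand side leaves $\|w_h\|_{\1} \lesssim S$, which is the desired discrete inf-sup stability with a constant $\alpha_2 > 0$ independent of $h$. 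The genuine obstacle — and the only place where assumption \ref{A5} enters — is securing that the dual problem is simultaneously well-posed and $H^2$-regular on the \emph{constrained} space $\5$, and that its solution can be approximated to order $h$ by elements of $V_h \cap \5$ rather than merely of $V_h$; these are exactly the contents of Proposition~\ref{prop-inf-sup-stability-Wperpu}, Lemma~\ref{H2-lemma}, and the constrained best-approximation property of Lemma~\ref{projection-property-lemma}.
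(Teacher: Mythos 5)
Your proposal is correct, and every ingredient you invoke is indeed available in the paper: the symmetry of $a(v,w):=\langle (E''(u)-\lambda\mathcal{I})v,w\rangle$ from \eqref{sd}, the G{\aa}rding inequality (which is exactly \eqref{requirement2}), the dual $H^2$-regularity and stability of Lemma \ref{H2-lemma} (this is where \ref{A5} enters, as you say), and the order-$h$ approximation \eqref{P_property2} in the \emph{constrained} space $V_h\cap\5$, which rests on Lemma \ref{projection-property-lemma}. Your argument is, however, a genuinely different execution of the Schatz paradigm than the paper's. The paper constructs, for each fixed $w\in V_h\cap\5$, an explicit quasi-optimal test function $v=w+z_h$: the dual solution $z\in\5$ is chosen with source $\lambda w$ precisely so that the indefinite contribution $-\lambda\langle\mathcal{I}w,w\rangle$ cancels exactly, whence the \emph{full} coercivity of $E''(u)$ (from \eqref{def-alpha}, not merely the G{\aa}rding inequality) gives $\langle(E''(u)-\lambda\mathcal{I})(w+z),w\rangle=\langle E''(u)w,w\rangle\geq\alpha\|w\|_{\1}^2$; replacing $z$ by $z_h$ costs $O(h)\|w\|_{\1}^2$, and the stability bound $\|w+z_h\|_{\1}\lesssim\|w\|_{\1}$ then yields the inf-sup constant. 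You instead run the classical two-step absorption: duality for the weak norm gives $\|w_h\|_{\0}\lesssim S+h\|w_h\|_{\1}$, which is inserted into the G{\aa}rding inequality and the $h^2$-term absorbed for small $h$, with the $S^2$-term handled by continuity. Both variants consume the same regularity and the same constrained approximation property; yours is more modular (no modified test function to construct, no norm bound $\|w+z_h\|_{\1}\lesssim\|w\|_{\1}$ to verify, and symmetry is used transparently to swap arguments under the supremum), while the paper's makes the cancellation mechanism visible, exhibits the near-optimal discrete test function explicitly, and tracks the degradation of the constant ($\tilde{\alpha}=\alpha-C_4h$) directly. One point worth making explicit if you write this up: the $H^1$-stability of $P_h$ that you assert follows from $P_h$ being the orthogonal projection in the $(\nabla\cdot,\nabla\cdot)_{\0}$ inner product of \eqref{definition-Ph} combined with the Friedrichs inequality on $H^1_0(\D)$ --- the paper uses the same fact without proof in Section \ref{subsection-L2-estimate}, so this is consistent with its toolkit.
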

\begin{proof}
The proof is standard and follows similarly as in \cite{Mel95} for the Helmholtz equation. 
Let $ w \in  V_h \cap \5  $ be arbitrary but fixed. For $z \in \5$ we consider the expression
\begin{eqnarray}
\label{proof-discrete-inf-sup-1}
\langle(E''(u)-\lambda \mathcal{I})(w+z),w\rangle &=& \langle E''(u)w,w\rangle +\langle (E''(u)-\lambda \mathcal{I})z,w\rangle - \lambda \langle \mathcal{I} w,w \rangle.
\end{eqnarray}
By the continuous inf-sup stability \eqref{inf-sup-in-appendix} (implied by \ref{A5})
there exists $z \in \5$ with
\begin{align}
 \label{proof-discrete-inf-sup-2}
 \langle (E''(u)-\lambda \mathcal{I})z,w\rangle = \lambda \langle \mathcal{I} w,w \rangle 
 \qquad \text{ and } \qquad \|z\|_{\1} \leq  C\|w\|_{\0}
\end{align}
and, as before, it holds the regularity estimate $\|z\|_{H^2(\D)} \lesssim \|w\|_{\0}$. 
Plugging \eqref{proof-discrete-inf-sup-2} in \eqref{proof-discrete-inf-sup-1} we get $\langle(E''(u)-\lambda \mathcal{I})(w+z),w\rangle = \langle E''(u)w,w\rangle $ and hence with the coercivity of $E''(u)$:
\begin{equation}
\label{proof-discrete-inf-sup-3}
\langle(E''(u)-\lambda \mathcal{I})(w+z),w\rangle = \langle E''(u)w,w\rangle \geq \alpha \|w\|_{\1}^2
\end{equation}
Now, we can choose an approximation  $z_h \in   V_h \cap \5 $ of $z$ such that
\begin{equation}
\label{proof-discrete-inf-sup-4}
\|z-z_h\|_{\1} \leq C_1\, h \, |z|_{H^2(\D)},
\end{equation}
where we used Lemmas \ref{projection-property-lemma} and \ref{H2-lemma} here. Therefore, we have
\begin{eqnarray}
\nonumber \lefteqn{ \langle(E''(u)-\lambda \mathcal{I})(w+z_h),w\rangle  \,\,\, = \,\,\,  \langle  ( E''(u)- \lambda \Ical )(w+z),w\rangle  -  \langle ( E''(u) - \lambda \Ical )(z-z_h),w\rangle  } \\
\nonumber  & \geq & \alpha \, \|w\|_{\1}^{2} - C_2 \|z-z_h\|_{\1} \|w\|_{\1} \\
 \nonumber & \geq & \alpha \, \|w\|_{\1}^{2} - C_3 h \, |z|_{H^2(\D)} \|w\|_{\1} \\
  \label{proof-discrete-inf-sup-5}  & \geq & \alpha \, \|w\|_{\1}^{2} - C_4 h \,  \|w\|_{\1}^2 
  \,\,\, \geq \,\,\, \tilde{\alpha} \|w\|_{\1}^2,\hspace{150pt}
\end{eqnarray}
 where  $\tilde{\alpha} >0$, if $h$ is sufficiently small. We conclude
\begin{eqnarray}
\nonumber \lefteqn{ \|w + z_h\|_{\1} \,\,\,\leq \,\,\, \|w\|_{\1} +\|z\|_{\1} +\|z-z_h\|_{\1} } \\
\label{proof-discrete-inf-sup-6} &\lesssim & \|w\|_{\1} + C\|w\|_{\1} +C_1 \, h \|z\|_{H^2(\D)}  \,\,\,\leq \,\,\, C_5 \|w\|_{\1} \, .
\end{eqnarray}
 Plugging \eqref{proof-discrete-inf-sup-6} in \eqref{proof-discrete-inf-sup-5}, we get
$$\langle(E''(u)-\lambda \mathcal{I})(w+z_h),w\rangle \geq \tilde{\alpha} \|w\|_{\1}^2 \geq \alpha_{2} \|w+z_h\|_{\1} \|w\|_{\1},$$
which implies
$$ \sup_{v \in  V_h \cap \5 } \frac{\langle(E''(u)-\lambda \mathcal{I})v,w\rangle }{\|v\|_{\1} \|w\|_{\1}} \geq   \frac{\langle(E''(u)-\lambda \mathcal{I}(w+z_h),w\rangle }{\|w+z_h\|_{\1} \|w\|_{\1}} \geq \alpha_{2}$$
Since $w\in \5 \cap V_{h}$ was arbitrary, we can take infimum to obtain the desired result.
\end{proof}

\end{document}